\newcommand{\C}{\mathbb{C}}
\newcommand{\N}{\mathbb{N}}
\newcommand{\R}{\mathbb{R}}
\newcommand{\FF}{\mathbb{F}}
\newcommand{\A}{\mathcal{A}}
\newcommand{\Bc}{\mathcal{B}}
\newcommand{\Kc}{\mathcal{K}}
\newcommand{\LL}{\mathcal{L}}
\newcommand{\Nica}{\mathcal{N}}
\newcommand{\NT}{\mathcal{NT}}
\newcommand{\T}{\mathcal{T}}
\newcommand{\bs}{\backslash}
\newcommand{\ml}{\mathlarger}
\newcommand{\dis}{\bigsqcup}
\newcommand{\dcup}{\: \mathlarger{\sqcup} \:}
\newcommand{\ra}{\rangle}
\newcommand{\la}{\langle}
\newcommand{\inv}{^{-1}}
\newcommand{\Tr}{\operatorname{Tr}}
\newcommand{\KMS}{\operatorname{KMS}}
\def\clq{\operatorname{Cl}}
\def\pmin{P_{\operatorname{inf}}}
\theoremstyle{plain}
\newtheorem{thm}{Theorem}[section]
\newtheorem{lem}[thm]{Lemma}
\newtheorem{prop}[thm]{Proposition}
\newtheorem{corollary}[thm]{Corollary}
\theoremstyle{definition}
\newtheorem{defn}[thm]{Definition}
\newtheorem{exmp}[thm]{Example}
\newtheorem{rem}[thm]{Remark}
\numberwithin{equation}{section}
\newcommand{\thmref}[1]{Theorem~\ref{#1}}
\newcommand{\secref}[1]{Section~\ref{#1}}
\newcommand{\proref}[1]{Proposition~\ref{#1}}
\newcommand{\lemref}[1]{Lemma~\ref{#1}}
\newcommand{\corref}[1]{Corollary~\ref{#1}}
\newcommand{\defref}[1]{Definition~\ref{#1}}
\newcommand{\myemph}[1]{\index{#1}\emph{#1}}
\begin{document}

\title[KMS states of quasi-free dynamics]{KMS states of quasi-free dynamics on
$C^*$-algebras of product systems over right LCM monoids
}

\author[Gazdag]{Luca Eva Gazdag}
\address{Department of Mathematics, University of Oslo,
P.O. Box 1053 Blindern, N-0316 Oslo, Norway.}
\email{lucaeg@student.matnat.uio.no}
\author[Laca]{Marcelo Laca}
\address{Department of Mathematics and Statistics \\
 University of Victoria\\
 Victoria BC V8W 2Y2, Canada}
\email{laca@uvic.ca}
\thanks{M. Laca  (ORCiD: 0000-0002-0901-8165).}
\author[Larsen]{Nadia S. Larsen}
\address{Department of Mathematics, University of Oslo,
P.O. Box 1053 Blindern, N-0316 Oslo, Norway.}
\email{nadiasl@math.uio.no}

\begin{abstract}
We generalise recent results of Afsar, Larsen and Neshveyev for product systems over quasi-lattice orders  by showing  that the equilibrium states of  quasi-free dynamics on the Nica-Toeplitz $C^*$-algebras of  product systems over right LCM monoids must satisfy a positivity condition encoded in a system of inequalities satisfied by their restrictions to the coefficient algebra.  We prove that the reduction of this positivity condition to a finite subset of inequalities is valid for a wider class of monoids that properly includes finite-type Artin monoids, answering  a question left open in their work.
Our main technical tool is a combinatorially generated tree modelled on a recent construction developed by Boyu Li for dilations of contractive representations. We also obtain a reduction of the positivity condition to inequalities arising from a certain minimal subset that may not be finite but has the advantage of holding for all Noetherian right LCM monoids, and we present an example, arising from a finite-type Artin monoid, that exhibits a gap in its inverse temperature space.
\end{abstract}

\date{July 29, 2022}

\maketitle

\section{Introduction}
 A very rich context for the study of KMS states  is provided by  the $C^*$-algebras $\T(X)$ and $\mathcal{O}(X)$ associated to a $C^*$-correspondence $X$ by Pimsner \cite{Pi97}.  These algebras can be endowed with various quasi-free dynamics, and one of the key features of the resulting $C^*$-dynamical systems is the relation between their KMS states and the traces of the coefficient algebra \cite{LN04}. Since the $C^*$-algebras associated to product systems of $C^*$-correspondences are natural generalisations of Pimsner algebras \cite{F02,SY10}, it is natural to wonder about a similar study for product systems.

Along these lines,  KMS states of Nica-Toeplitz $C^*$-algebras and  their Cuntz-Pimsner quotients   for special cases of product systems have been studied in \cite{HLS,AaHR18,BLRS19}.
More recently, partly motivated by  the connection between combinatorial and analytic properties of quasi-lattice ordered monoids initially explored in \cite{BLRS19}, Afsar, Larsen and Neshveyev  have completely characterised the traces of the coefficient algebra of a product system $X$ that extend to KMS states of $\NT(X)$ for all compactly aligned product systems over quasi-lattice ordered semigroups \cite{ALN18}.
Their characterisation provides a unified perspective of several special cases previously studied and  consists of a system of inequalities that amount to a positivity condition for induced traces. In principle such a system involves infinitely many inequalities. But for product systems over right-angled Artin monoids $A_M^+$, the positivity condition reduces to a more tractable system of inequalities that depends only on the canonical generating set of $A_M^+$ and is finite whenever $A_M^+$ is finitely generated \cite[Section 9]{ALN18}. This immediately raises the concrete question of whether a similar result holds for other Artin monoids or indeed other monoids $P$.

In this article we show that this `reduction of positivity to generators' is valid  for finite-type Artin monoids, as well as other Artin monoids that are neither right-angled nor finite-type. Along the way, we also obtain a  simplified characterisation, valid  when $P$ is a Noetherian right LCM monoid, in which the inequalities arise from  a certain minimal subset $\pmin$ of $P$ that was recently introduced in  \cite{BL18} by B. Li in the context of regular dilations of contractive representations of monoids.

In order to describe our results  in more precise terms, it is helpful to recall the abstract characterisation of KMS states in terms of the systems of inequalities from \cite{ALN18}. We refer to  \secref{sec:background} below for the necessary definitions.

 Suppose that $(G,P)$ is a quasi-lattice ordered group, that $X=\{X_p\}_{p \in P}$ is a compactly aligned product system of $C^*$-correspondences over $P$ with $X_e = A$, that  the $C^*$-algebra $\NT (X)$ is endowed with a time evolution defined by a homomorphism $N: P \to (0,\infty)$ and that $\beta \in \R$. For a finite subset $K$ of $P$ we denote by $\lor K \in P \cup \{\infty \}$ the smallest upper bound of the elements in $K$.
  If $\tau$ is a tracial state on $A$ and $p\in P$, then $\Tr_{\tau}^p$ denotes the trace on $A$ induced from  $\tau$ through the bimodule $X_p$ as in \cite[Theorem 1.1]{LN04}.
  This is the set-up of  \cite[Theorem 2.1]{ALN18}, which shows that the inequality
 \begin{equation}\label{eqn:2.1}
\tau(a) + \sum_{\emptyset \neq K \subset J} (-1)^{|K|}N(\lor K)^{-\beta}
\Tr_{\tau}^{ \lor K}(a)
\geq 0, \qquad  a\in A_+, \quad J \subset P\bs \{e\},
\end{equation}
must hold for the restriction $\tau = \phi|_{A}$ of a KMS$_\beta$ state $\phi$ on $\NT(X)$ for every finite  $J$. By convention, the summands corresponding to
$\lor K = \infty$ are zero. Moreover every tracial state of $A$ satisfying \eqref{eqn:2.1} is the restriction of a $\KMS_{\beta}$-state $\phi$ on $\Nica \T (X)$. In general, the restriction map may not be one to one, but
\cite[Theorem 5.1]{ALN18} shows that the gauge invariant $\KMS_{\beta}$-states are in affine bijection with the tracial states of $A$ satisfying the positivity condition \eqref{eqn:2.1}.

Inequalities of the form \eqref{eqn:2.1} are often referred to as  subinvariance conditions and have appeared before in the study of KMS states: the case of a single correspondence $X$ requires one inequality \cite[Theorem 2.1]{LN04}; the case of a product system  arising from finitely many $*$-commuting local homeomorphisms of a compact space involves finitely many inequalities \cite[Proposition 4.1]{AaHR18}.

In principle, by \cite[Theorem 5.1]{ALN18}, the gauge-invariant $\KMS_{\beta}$-states of the dynamics on  $\NT(X)$ arising from a homomorphism $N$ can be determined by finding all the traces on $A$ that satisfy the positivity condition. Needless to say, this procedure may be unmanageable in practice, so it becomes important to identify cases in which a smaller system of inequalities suffices.

 Such a simplification was obtained in \cite[Theorem 9.1]{ALN18} for the case in which $P$ is a right-angled Artin monoid $A_M^+$. Specifically,  denoting the set of canonical generators of $A_M^+$ by $S$, in this case  the positivity condition \eqref{eqn:2.1} is equivalent to
\begin{equation}\label{eq:ineq subset of S}
\tau(a) + \sum_{\substack{ \emptyset \neq K \subset J \\ K \in cl(S)}}
(-1)^{|K|}N(\lor K)^{-\beta}\Tr_{\tau}^{ \lor K}(a) \geq 0
 \text{ for all  finite }J \subset S\text{ and }a\in A_+,
\end{equation}
where the sum is over the collection  $cl(S)$ of cliques of $S$, namely the collection of subsets of $S$  that have an upper bound in $P$.  We refer to such a result as reduction of the positivity condition \eqref{eqn:2.1} to atoms.  Our main results are various reductions of the positivity condition to smaller systems of inequalities
valid for several classes of monoids $P$.

As for our context, we begin by considering general right LCM monoids, which are the left cancellative monoids that  have least common upper bounds and may contain nontrivial invertible elements. For some results we focus on the more restrictive class of group embeddable right LCM monoids that have no  invertible elements other than the identity;
 these are known as {\em weak quasi-lattice ordered monoids}  \cite[Definition 2.1]{ABCD19}.  In most situations we will also assume $P$ to be Noetherian; this is a suitable assumption that ensures the existence of a set of atoms which, together with the invertible elements, generate the monoid. It is known, for example, that the existence of a length function implies Noetherianity.  Our main applications and most definite results are for finite-type Artin monoids, for which we answer in the positive, the question of reduction of positivity to generators raised in \cite{ALN18}.

The paper is organised as follows. In \secref{sec:background} we review briefly the necessary background on monoids. The main results of \secref{sec:prodsyst} are the generalisations of Theorem 2.1 and Theorem 5.1 of \cite {ALN18} to the Nica-Toeplitz algebra $\NT(X)$ of a compactly aligned product system $X=(X_p)_{p\in P}$ over  a right LCM monoid $P$. This is fairly straightforward, except  for the need to upgrade several results about inducing in stages  for product systems over Noetherian monoids $P$ that have nontrivial invertible elements.
 This widens the range of applications and may be of independent interest.
Since we do not assume the existence of an ambient group, we also propose a natural notion of gauge-invariance for KMS states on $\NT(X)$. The key is to use certain conditional expectations that were implicit in \cite{KL1} and \cite{KL2} to replace the conditional expectations of the dual coactions that exist only when $P$ embeds in a group.

In \secref{sec:tree}, motivated by \cite{BL18}, we construct a tree by an induction procedure that provides a recursion formula to keep track of how the positivity condition for a given level depends on the next level, \proref{prop:recursion lists semilattice}. Our first main result is \thmref{thm:finite type} where we show that reduction of positivity holds whenever the tree is finite. As applications we prove reduction results for finite-type Artin monoids, \corref{cor:directedcase}, and we also recover in  \proref{pro:finitetree-RAAMcase} the reduction result for right-angled Artin monoids, which was proved by different means in \cite[Theorem 9.1]{ALN18}.  Further, \corref{cor:simplification} spells out the main practical method to find KMS states by verifying positivity on suitable finite subsets of $P$.

In \secref{sec:reduction minimal set all qlo} we take a different approach to reduction of the positivity condition. More precisely, we combine the technique from \cite{ALN18} for proving reduction for right-angled Artin monoids with a minimal set introduced by Boyu Li in  \cite{BL18}. This minimal set is the smallest subset of $P$ that contains the atoms and is closed under taking left divisors of least upper bounds.
The key step, \lemref{lem:int}, is to show that a certain algebra of subsets of $P$ can be constructed from ideals $pP \subset P$ with $p$ in the minimal set.
 Our second main result is \thmref{thm:Pmin}, which shows that for all Noetherian  weak quasi-lattice orders  there is a reduction of positivity to a subsystem of inequalities corresponding to the minimal subset.

In \secref{sec: gaps} we specialise to product systems with one-dimensional fibres, that is,  $X_p=\mathbb{C}$ for all $p\in P$, and we illustrate, for the $C^*$-algebras of Artin braid monoids $B_n^+$ for $n\geq 3$, the technique of finding KMS states by solving the system of equations (explicitly for $n=4$).

 Finally, in \secref{sec:alt} we refine the search of a suitable algebra of subsets of $P$ for which reduction of positivity holds by going all the way down to the set of atoms inside the minimal subset of $P$. \thmref{thm:redifalg} formalises the result, and its main application, \corref{thm:NewRed}, provides a reduction of the positivity condition to atoms for  a class of Artin monoids, obtained through free and direct products,  which properly contains the finite-type and the right-angled ones. Explicit examples are given in  \proref{prop:B3alg}.  It remains an open question whether reduction of positivity to atoms holds for all Artin monoids.

\section{Background}\label{sec:background}
\subsection{Semigroups, conditional right LCMs, and noetherianity.}
All semigroups in this paper have an identity, denoted $e$, and thus are monoids. We concentrate our attention on left cancellative monoids that admit a partial order under which conditional right least common multiples exist, also referred to as \emph{right LCM monoids}. These objects fall under a broader class of monoids, and we refer to the monograph \cite{Garside-book} as reference.

For $p,q$ in a monoid $P$ we say that $q$ is a \emph{right multiple} of $p$, written $p\leq q$, provided that there is $q'\in P$ such that $pq'=q$. Equivalently, $q\in pP$. A further  equivalent formulation is that $p$ is a \emph{left divisor} of $q$. The set of left-divisors of $q$ will be denoted $\operatorname{Div}_l(q)$.  The relation $\leq$ is an order when $P$ is left cancellative with no non-trivial invertible elements. There are obvious definitions of the symmetric notions of left multiple (right divisor). The invertible elements in $P$ form a group $P^*$, possibly the trivial group $\{e\}$.

\begin{defn} \cite[Definition 2.9]{Garside-book}
Let $P$ be a left-cancellative monoid and let $U$ be a subset of $P$. The element $r$ of $P$ is a least common right multiple, or \emph{right LCM}, of $U$ if $r$ is a right multiple of each element of $U$ and every element $r'$ of $P$ that is a right multiple of each element of $U$ is a right multiple of $r$.
\end{defn}

A subset $U$ of $P$ that admits a right LCM $r\in P$ is called a \emph{clique} in $P$. We denote a right LCM of $U$ by $\vee U$ and note that $(\vee U) u$ is a right LCM of $U$ for every $u\in P^*$. If $U$ is a clique consisting of two elements $p$ and $q$, we write $p\vee q$ for a right LCM of $\{p,q\}$. We write $\vee U=\infty$ when $U$ has no right LCM. By convention, the empty set is a clique. Obviously, every one-element subset of $P$ is a clique. For a subset $J\subset P$ we denote the collection of cliques $U$ of $J$ by $cl(J)$.

Note that the operation $\vee$ is associative on $P$ (since $P$ is left-cancellative), a fact which will be used tacitly throughout.

We recall now an important class of monoids: in \cite{Garside-book}, their defining property is called existence of  conditional right LCM's.  Right-cancellative monoids that admit conditional lcm's with respect to left multiples have been studied in e.g. \cite{Law}.

\begin{defn} A left-cancellative monoid $P$ with the property that every two elements $p$ and $q$ that admit a common
right multiple admit a right LCM will be called a right LCM monoid.
\end{defn}

A monoid $P$ is \emph{left-Noetherian} (respectively \emph{right-Noetherian}, respectively \emph{Noetherian}) provided that there exists no infinite descending sequence in $P$ with respect to proper left-divisibility (respectively proper right-divisibility, respectively proper factor relation). By \cite[Proposition II.2.29]{Garside-book}, a left-cancellative monoid is Noetherian if and only if it is both right- and left-Noetherian.

If $P$ is a left-cancellative monoid that is right-Noetherian, and if $q\in P$, then every sequence in $\operatorname{Div}_l(q)$ of the form $p_1,p_1p_2, p_1p_2p_3,\dots$ with $p_n$ non-invertible is necessarily finite, cf. \cite[Proposition II.2.28]{Garside-book} (i.e. The divisors of $q$ have no infinite increasing  sequences.)
   An analogous argument shows that if $P$ is also right-cancellative and left-Noetherian, then for each $q\in P$, any sequence of the form $r_1,r_2r_1,r_3r_2r_1,\dots$ with $r_n$ non-invertible (i.e. an increasing sequence with respect to right-divisibility) in the set $\operatorname{Div}_r(q)$ of right-divisors of $q$ is finite.

 An \emph{atom} in a monoid $P$ is an element $s \in P\setminus P^*$ such that every decomposition of $s$ as a finite product of elements in $P$ contains at most one element in  $P\setminus P^*$, see \cite[Ch. II, Definition 2.52]{Garside-book}. We denote the set of atoms in $P$ by $P_a$.

It is proved in \cite[Corollary II.2.59]{Garside-book} that in a Noetherian left-cancellative monoid $P$ that  contains no non-trivial invertible elements, the set of atoms $P_a$ is the smallest subset generating $P$. In an arbitrary left-cancellative Noetherian monoid, a generating set $P_{\operatorname{gen}}$ is any subfamily that generates $P^*$ and contains at least one element in each equivalence class of atoms for the relation $\sim$ with $s\sim s'$ if and only if $s'=su$ for $u\in P^*$, \cite[Proposition 2.58]{Garside-book}.

We recall from \cite{N92}, see also \cite{CL02} that a discrete group $G$ and  a subsemigroup $P$ so that $P\cap P^{-1}=\{e\}$ form a quasi-lattice ordered pair for the partial order $x\leq y  \Leftrightarrow x^{-1}y\in P$, with $x,y\in G$, if the following two conditions hold:
\begin{enumerate}
\item[(QL1)] For all $p,q\in P$ such that $(pq^{-1})P\cap P\neq \emptyset$ there is (a necessarily unique) $r\in P$ so that $(pq^{-1})P\cap P=rP$, and
\item [(QL2)] Given any pair $p,q\in P$ so that $pP\cap qP\neq \emptyset$   there is (a necessarily unique) element $p\vee q$ in $P$, their \emph{least common upper bound}, so that $pP\cap qP=(p\vee q)P$.
\end{enumerate}
A  discrete group $G$ and  a subsemigroup $P$ so that $P\cap P^{-1}=\{e\}$ form a weak quasi-lattice ordered pair if condition (QL2) alone is satisfied.

\subsection{Artin monoids}
We now recall some basic facts about Artin monoids. A \myemph{Coxeter matrix} over a set $S$ is a matrix $M=(m_{st})_{s,t \in S}$  such that
$m_{st}=m_{ts} \in \{ 2, \dots, \infty\}$
for all $s \neq t$ and $m_{ss} = 1$ for all $s,t \in S$, with the convention that $M$ may be an infinite matrix.
The \emph{Artin group }$A_M$ associated to $M$ is the group with
generating set $S $ and presentation
\begin{align*}
\{ S \: | \: \la st\ra^{m_{st}} = \la ts\ra^{m_{ts}} \:
\text{for all } s,t \in S \},
\end{align*}
where $\la st \ra^{m_{st}} = sts \dots$ is the alternating product of length $m_{st}$; notice that the last letter of $\la st \ra^{m_{st}}$  is $t$  when $m_{st}$ is even and $s$ when $m_{st}$ is odd.
 The \myemph{Artin monoid} $A_M^+$ is the monoid with the same presentation. As a consequence of \cite[Verk\"urzungslemma]{BS72},  every Artin monoid admits conditional right LCM's and the explicit formula for the least common upper bound of two generators is given by the obvious guess.

\begin{lem} \label{bound}
Every Artin monoid  $A_M^+$ embeds in the corresponding Artin group $A_M$, and the pair $(A_M,A_M^+)$ is  weak quasi-lattice ordered.
If $s$ and $t$ are generators, then
$s \lor t = \la st \ra^{m_{s,t}} = \la ts \ra^{m_{t,s}}$ for all $s,t \in S$ with $m_{s,t} = m_{t,s} <\infty$.
\end{lem}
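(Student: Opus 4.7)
The plan is to combine three classical results about Artin monoids: the Paris--Deligne embedding theorem for $A_M^+ \hookrightarrow A_M$, the Brieskorn--Saito analysis of divisibility in $A_M^+$ (in particular the Verk\"urzungslemma), and the consequent explicit description of right LCMs of pairs of generators.

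First I would establish the embedding. For finite-type Artin monoids this is the classical Brieskorn--Saito/Deligne result, while the general case is Paris's theorem. Then I would verify that $A_M^+\cap (A_M^+)^{-1}=\{e\}$ inside $A_M$, which is needed to put $(A_M,A_M^+)$ in the framework of \secref{sec:background}: every defining relator equates two alternating words $\la st\ra^{m_{st}}$ and $\la ts\ra^{m_{ts}}$ of the same length, so the prescription that each generator has length $1$ yields a well-defined monoid homomorphism $\ell\colon A_M^+\to \N$; it extends to $A_M\to \Z$ and is strictly positive off $e$, so any element of $A_M^+$ whose inverse also lies in $A_M^+$ must have $\ell$-value zero and therefore equal $e$.

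Next, to verify axiom (QL2), I would apply the Brieskorn--Saito Verk\"urzungslemma, which implies that any two elements of $A_M^+$ admitting a common right multiple already admit a right LCM in $A_M^+$. Combined with the absence of nontrivial invertibles this right LCM is unique, and one concludes that whenever $pA_M^+\cap qA_M^+$ is nonempty it equals $(p\vee q)A_M^+$, so that $(A_M,A_M^+)$ is weak quasi-lattice ordered.

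Finally, for the explicit formula, fix generators $s,t$ with $m:=m_{s,t}<\infty$. The defining relation $\la st\ra^{m}=\la ts\ra^{m}$ exhibits this element of $A_M^+$ as a common right multiple of $s$ and $t$, since the first expression starts with $s$ and the second starts with $t$. Minimality is a direct consequence of the Brieskorn--Saito divisibility theory applied to the sub-Artin monoid generated by $s$ and $t$, which is the dihedral Artin monoid of type $I_2(m)$ whose Garside element is precisely $\la st\ra^{m}$; every common right multiple of $s$ and $t$ has this alternating word as a left factor. I expect the main technical weight to lie in invoking the Paris embedding theorem in the general-type case, since the remaining assertions amount to reading off consequences of \cite{BS72}; indeed the excerpt already acknowledges that the formula ``is given by the obvious guess'' once conditional right LCMs are known to exist.
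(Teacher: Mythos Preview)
Your proposal is correct and follows the same route as the paper: Paris for the embedding, the Verk\"urzungslemma of \cite{BS72} for both the weak quasi-lattice property and the explicit LCM of two generators. The one place to tighten is the minimality argument for $\la st\ra^{m}$: the paper applies the Verk\"urzungslemma directly in $A_M^+$ to an arbitrary common upper bound $x=sp=tq$, extracting a positive word $w$ with $p=\la ts\ra^{m-1}w$ and hence $x=\la st\ra^{m}w$; your phrasing ``applied to the sub-Artin monoid generated by $s$ and $t$'' is slightly off, since a common right multiple of $s$ and $t$ in $A_M^+$ may involve other generators and need not lie in that submonoid, so the Verk\"urzungslemma must be invoked in the ambient monoid, exactly as the paper does. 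Your explicit verification that $A_M^+\cap(A_M^+)^{-1}=\{e\}$ via the length function is a useful detail the paper leaves implicit.
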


\begin{proof} The general embedding result is from \cite{Paris2002}, although various special cases had been known previously. That the pair is weak quasi-lattice ordered is a consequence of \cite[Verk\"urzungslemma]{BS72}.
For the last assertion, it is clear that $ \la st \ra^{m_{s,t}} =\la ts \ra^{m_{t,s}}$ is an
upper bound for  both $s$ and $t$ in $S$. Suppose $x \in P$ is an arbitrary upper
bound for $s$ and $t$, that is, there exist $p, q \in P$ such that $x = s p = t q$.
By \cite[Verk\"urzungslemma]{BS72}, there exists a positive word $w$ such that  $p= \la ts \ra ^
 {m_{s,t}-1}w$ and $q= \la st\ra^{m_{t,s}-1}w$. Hence $x = s p = \la s t\ra^{m_{s,t}}w$, and hence $\la st \ra^{m_{s,t}} \leq x$. Since $x$ is arbitrary we can conclude that
$ s \lor t = \la st\ra^{m_{s,t}} = \la ts \ra^{m_{ts}}$.
\end{proof}

There are two particularly well understood classes of Artin groups and monoids: \emph{Right-angled Artin groups}, where $m_{ij} \in \{2,\infty\}$ for all $i\neq j$ in $\Lambda$,
and \emph{finite-type Artin groups}, which are those for which the corresponding \emph{Coxeter group}, which is the group obtained by adding the relations
$s_i^2 =e$ to those of $A_M$,
 is finite, from this it  follows that their Coxeter matrix is finite and that it contains no infinities. Both form quasi-lattice ordered pairs, cf.  \cite{BS72} and \cite{CL02}.

\begin{rem}\label{rem:length}
Since Coxeter matrices are symmetric,  the  relations defining Artin monoids are homogeneous, in the sense that they preserve the total number of generators in a word. This implies that the usual length function on the free monoid factors through Artin monoids. As a consequence, all Artin monoids are Noetherian.
\end{rem}

\subsection{An algebra of subsets of $P$}
Suppose that $P$ is a right LCM monoid. For each $p\in P$, let $1_{pP}$ denote the characteristic function  of the set
$\{q\in P:p\leq q\}$.
Then  $B_P=\overline{\operatorname{span}}\{1_{pP}: p\in P\}$ is a commutative $C^*$-subalgebra of
$l^\infty(P)$, see \cite[Section 2]{KL2} for further properties.

Following \cite[Section 2]{ALN18} we denote by $\Bc_P$ the algebra of subsets of $P$ generated by the sets $pP$ for $p\in P$; by an algebra we mean a nonempty collection of subsets of $P$ that is closed under taking complements and finite unions. Then the set of projections in $B_P$ is precisely the set of characteristic functions of the elements of $\Bc_P$.
 For easy reference we state a version of \cite[Lemma 2.4]{ALN18} valid for right LCM monoids.

\begin{lem}\textup{(cf. \cite[Lemma 2.4]{ALN18})}\label{lem:algebra of sets general}
For each finite subset $J$ of $P$ let
\begin{equation}\label{eq:omega-set}
\Omega_J:=\bigcap_{q\in J}(P\setminus qP),
\end{equation}
where by convention $\Omega_\emptyset = P$ and $\Omega_J=\emptyset$ whenever $J\cap P^*\neq \emptyset$.
Then every set $\Bc_P$ is a finite disjoint union of sets of the form $p\Omega_J$, where
$p \in P$ and $J \subset P$ is a  finite set.
\end{lem}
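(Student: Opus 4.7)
The plan is to show that the collection $\mathcal{D}$ of finite disjoint unions of sets of the form $p\Omega_J$ with $p\in P$ and $J\subset P$ finite is itself an algebra of subsets of $P$. Since each $pP$ equals $p\Omega_\emptyset$, and since every $p\Omega_J$ belongs to $\Bc_P$, once $\mathcal{D}$ is shown to be an algebra the equality $\mathcal{D}=\Bc_P$ will follow. Let $\mathcal{C}:=\{p\Omega_J:p\in P,\ J\subset P\ \text{finite}\}$. The working identity behind every subsequent computation is
\[
p\Omega_J \;=\; pP\setminus \bigcup_{r\in J}prP,
\]
which is immediate from left-cancellativity of $P$.

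The core structural step is to verify that $\mathcal{C}$ is closed under pairwise intersection. Given $p\Omega_J$ and $q\Omega_K$, expanding both factors by the working identity and distributing reduces the computation to intersections of principal right ideals. The right LCM property of $P$ identifies each of $pP\cap qP$, $prP\cap qP$ and $pP\cap qsP$ with either $\emptyset$ or the principal right ideal generated by the corresponding right LCM; moreover, every nonempty such right LCM is a right multiple of $p\vee q$. Using left-cancellativity to strip off the common left factor $p\vee q$ then recasts $p\Omega_J\cap q\Omega_K$ as either $\emptyset$ (when $p\vee q$ does not exist) or $(p\vee q)\Omega_L$, where $L$ is the finite set of left-quotients of the surviving right LCMs by $p\vee q$. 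Induction yields closure of $\mathcal{C}$ under arbitrary finite intersections, and hence $\mathcal{D}$ inherits closure under intersections.

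For complements, I would use the disjoint decomposition
\[
P\setminus p\Omega_J \;=\; (P\setminus pP)\;\sqcup\;\bigcup_{r\in J}prP,
\]
whose first summand is $\Omega_{\{p\}}\in\mathcal{C}$ and whose second lies inside $pP$. I would disjointify the inner union via the telescoping formula $\bigcup_i A_i = \bigsqcup_i \bigl(A_i\cap \bigcap_{j<i}A_j^c\bigr)$ applied to $A_i=pr_iP$; each resulting block $pr_iP\cap \bigcap_{j<i}\Omega_{\{pr_j\}}$ is an intersection of elements of $\mathcal{C}$ and therefore itself lies in $\mathcal{C}$ by the previous step. Hence $P\setminus p\Omega_J\in\mathcal{D}$; complements of arbitrary elements of $\mathcal{D}$ then follow from De Morgan combined with intersection closure, and $A\cup B = A\sqcup(B\setminus A)$ delivers closure under finite unions. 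The only new point relative to the quasi-lattice ordered case of \cite[Lemma 2.4]{ALN18} is the possible presence of non-trivial invertibles in $P$: the convention $\Omega_J=\emptyset$ whenever $J\cap P^*\neq \emptyset$ is forced by $uP=P$ for $u\in P^*$, and $p\vee q$ is defined only up to right multiplication by $P^*$; however, that ambiguity leaves the ideal $(p\vee q)P$ and hence the class of sets $(p\vee q)\Omega_L\in\mathcal{C}$ unchanged, so no obstacle beyond careful bookkeeping of invertibles arises.
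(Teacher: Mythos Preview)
Your argument is correct. The intersection step is the heart of the matter and you have it right: using the right LCM property, $p\Omega_J\cap q\Omega_K$ is either empty or equals $(p\vee q)\Omega_L$ with $L$ the finite set of left quotients $(p\vee q)^{-1}(pr\vee q)$ and $(p\vee q)^{-1}(p\vee qs)$ that survive; the complement step via telescoping disjointification then follows cleanly from intersection closure. Your remark about invertibles is apt: the only effect of $P^*\neq\{e\}$ is that $p\vee q$ is a $P^*$-coset representative rather than a single element, which is harmless since only the ideal $(p\vee q)P$ enters.

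As for comparison with the paper: there is nothing to compare against. The paper does not supply a proof of this lemma; it simply states the result ``for easy reference'' and defers to \cite[Lemma~2.4]{ALN18} for the quasi-lattice ordered case, implicitly asserting that the same argument goes through for right LCM monoids. Your write-up is precisely the kind of self-contained verification the paper omits, and it follows the expected route (show the collection is closed under intersection and complement, hence is an algebra containing the generators $pP$). There is no genuinely different idea available here.
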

In the reduction of the positivity condition to generators proved in \cite[Theorem 9.1]{ALN18} in the case of {right-angled} Artin monoids, the key ingredient is
that to generate $\Bc_P$ it suffices to take subsets $J \subset S$ of generators, see \cite[Lemma 9.3]{ALN18}.

\section{Product systems over right LCM monoids}\label{sec:prodsyst}
Suppose that $P$ is a left cancellative monoid. The notion of a product system $X$ over $P$ of $C^*$-correspondences was introduced in \cite{F02}. In particular, \cite{F02} identified the important class of Nica-Toeplitz covariant representations of a compactly aligned product system $X$ over a monoid $P$ in a quasi-lattice ordered pair $(G, P)$.

The analysis of KMS states for time evolutions on  Nica-Toeplitz $C^*$-algebras of compactly aligned product systems over positive cones of quasi-lattice ordered groups was carried out in  \cite{ALN18}.
One of our aims here is to extend  this analysis to product systems over the positive cones of weak quasi-lattice ordered groups, and, more generally,  over right LCM monoids. Another more specific aim is to show that the reduction of the characterisation of KMS states to generators holds in other cases as well, notably for finite-type Artin monoids.
We begin by briefly recalling some necessary background. For a comprehensive discussion of product systems of $C^*$-correspondences see \cite{F02, SY10, CLSV11} in the quasi-lattice ordered case and the generalisation \cite{BLS2, KL2} to the right LCM case.

Suppose that  $P$ is a (countable) cancellative semigroup with identity $e$ and $A$  a $C^*$-algebra.
Let $X$ be a right $C^*$-Hilbert $A$-module. We denote the adjointable operators on $X$ by $\LL (X)$ with identity $I_X$.
A $C^*$-correspondence $X$ over $A$ is a right $C^*$-Hilbert $A$-module
equipped with a left action given by a $*$-homomorphism $\varphi:A \to \LL (X)$. The correspondence is  \myemph{essential} if $\overline{\varphi(A)(X)} = X$. A collection $(X_p)_{p \in P}$ of  $C^*$-correspondences over
$A$ is a \myemph{product system}, cf. \cite{F02}, if the following properties are satisfied:
\begin{itemize}
	\item[$(i)$] $\bigcup_{p \in P} X_p$ has a monoid structure such that for any $\xi \in X_p$ and $\zeta \in X_q$ we have
	$\xi \zeta \in X_{pq}$ and the map $\xi \otimes \zeta \mapsto \xi \zeta$ extends to an isometric isomorphism $X_p \otimes_A X_q \simeq X_{pq}$ for all $p,q \in P$ with $p\neq e$.
	\item[$(ii)$] $X_e$ is the canonical correspondence ${_{A}}A_{A}$ and the product maps $X_e \times X_p \to X_p$ and
	$X_p \times X_e \to X_p$ coincide with the structure maps of the $A$-bimodules $X_p$ for all $p \in P$.
\end{itemize}	


\subsection{The Nica-Toeplitz $C^*$-algebra of $X$} A universal $C^*$-algebra $\mathcal{T}(X)$ is associated to a product system $X$ via representations $\psi$ of $X$. We refer to \cite{F02} for the definition of an arbitrary  representation $\psi:X\to B$ with $B$ a $C^*$-algebra. Next we recall in some detail the notion of Nica covariant representation of $X$, under further assumptions on $P$ and $X$.

Suppose that $P$ is a right LCM monoid and $X$ is a product system over $P$ of $C^*$-correspondences over a $C^*$-algebra $A$ such that $X_p$ is essential for every $p\in P$. Note that if $P^*\neq \{e\}$ then $X$ will automatically have essential fibres, cf. \cite[Remark 1.3]{KL2}. The requirement of essential fibres is not needed to define Nica covariant representations and the universal $C^*$-algebra $\NT(X)$, but is useful when we also bring into the discussion the reduced Nica-Toeplitz $C^*$-algebra $\NT^r(X)$, cf. also \cite{ALN18}.

Let $I_p$ denote the identity operator in $\mathcal{L}(X_p)$ for every $p\in P$. If $p, r \in P$ with $p\le r$, then  there is a natural $*$-homomorphism $\iota^r_p\colon \LL(X_p)\to\LL(X_r)$ obtained by identifying $X_r$ with $X_p\otimes_A X_{p^{-1}r}$ and mapping $S\in\LL(X_p)$ into $S\otimes I_{p^{-1}r}$. Let $\Kc(X_p)$ denote the ideal of generalised compact operators in $\LL(X_p)$ for each $p$. We say that $X$ is \emph{compactly aligned} if for all $p,q,r\in P$ with $pP\cap qP=rP$ we have that
$$
\iota^{r}_p(\Kc(X_p))\iota^{r}_q(\Kc(X_q))\subset\Kc(X_{r});
$$
no condition is imposed when $pP\cap qP=\emptyset$.
We define elements $\theta_{\xi,\zeta}\in\Kc(X_p)$ by $\theta_{\xi,\zeta}\eta=\xi\langle\zeta,\eta\rangle$, for $\xi,\zeta,\eta\in X_p$. Given a representation $\psi$ of $X$ into $B$, there are $*$-homomorphisms $\psi^{(p)}\colon\Kc(X_p)\to B$  given by $\psi^{(p)}(\theta_{\xi,\zeta})=\psi_p(\xi)\psi_p(\zeta)^*$ for each $p\in P$. A representation $\psi$ is  Nica covariant if
$$
\displaystyle \psi^{(p)}(S)\psi^{(q)}(T) =
\begin{cases}
\psi^{(r)}\big(\iota^{r}_p(S)\iota^{r}_q(T)\big)
& \text{if $pP\cap qP=rP$} \\
0 &\text{otherwise}
\end{cases}
$$
for all $S \in \Kc(X_p)$ and $T \in \Kc(X_q)$. We refer to \cite[Section 6]{BLS2} and \cite[Lemma 2.4]{KL2} for further details.
By definition, the \emph{Nica-Toeplitz algebra} of $X$ is the $C^*$-algebra $\NT(X)$  generated by a universal  Nica covariant representation $i_X\colon X\to \NT(X)$.

Following \cite{ALN18}, for  each multiplicative semigroup homomorphism $N: P \to (0,\infty)$ we will consider the  time evolution $\sigma$ on $\NT(X)$  such that
$\sigma_t(i_X(\xi_p)) = N(p)^{it}i_X(\xi_p)$ for $\xi_p \in X_p$ and $t\in \R$.
We will need to assume throughout that $N$ is trivial on $P^*$, that is, $N_u=1$ for every $u\in P^*$; this is automatic if the range of $N$ is contained in $[1,\infty)$.
Recall that a state $\phi$ of $\NT(X)$  is a KMS state at inverse temperature $\beta$ (a $\KMS_{\beta}$ state) provided that
\[
\phi(xy) = \phi(y\sigma_{i\beta}(x))
\]
for all $x,y$ with $x$ analytic with respect to $\sigma$, that is to say, $t \mapsto \sigma_t(x)$ extends to a C*-algebra-valued entire function. Since in the present case it  suffices to consider the $\sigma$-invariant set of analytic elements  $i_X(\xi_p) i_X(\eta_q)^* $ with $\xi_p\in X_p$ and $\eta_q \in X_q$ because it has  a dense linear span (it suffices to know that $i_X(\xi)$ generate $\NT(X)$ as a $C^*$-algebra, see \cite[Lemma 1.9]{ALN18}), we will see below that
$\KMS_{\beta}$ states of $(\NT(X), \sigma)$ are characterized by the condition
\[
\phi(i_X(\xi_p) i_X(\eta_q)^*) = \delta_{p,q} N^{-\beta}\Tr_\tau^p (\theta_{\eta_q,\xi_p}), \qquad \xi_p\in X_p, \ \ \eta_q \in X_q,
\]
involving the induced trace of  the tracial state $\tau := \phi\vert_A$.

\subsection{Inducing traces via  $C^*$-correspondences} We recall from \cite[Sections 5.1 and 5.2]{P79} that a weight $\psi$ on a $C^*$-algebra $A$ is a map $\psi:A_+\to [0,\infty]$ that satisfies positive homogeneity: $\psi(\alpha a)=\alpha\psi(a)$ for all $\alpha\in \R_+$ and $a\in A_+$, and {additivity:} $\psi(a+b)=\psi(a)+\psi(b)$ for all $a,b\in A_+$. The weight $\psi$ is lower semicontinous if $\{a\mid \psi(a)\leq c\}$ is closed for all $c\in \R_+$. With $A^\psi_+:=\{a\in A_+\mid \psi(a)<\infty\}$, the space $A^\psi=\operatorname{span}A^\psi_+$ is a hereditary $C^*$-algebra of $A$ on which $\psi$ admits a unique extension as a positive linear functional, still denoted $\psi$. If the weight $\psi$  is invariant under conjugation by unitary elements in the
unitisation $\tilde{A}$, we say that it is a trace, in which case $A^\psi$ is an ideal of $A$. Moreover, if $\psi$ is a {finite trace}, meaning that $A^\psi_+=A_+$, then $\psi(ab)=\psi(ba)$ for all $a,b\in A$.

Suppose that $X$ is a $C^*$-correspondence over  $A$ and $\tau$  a tracial positive linear functional on $A$. We recall from \cite[Theorem 1.1]{LN04} that the \emph{induced trace} $\Tr_{\tau}^{X}$ is defined for  $T \in \mathcal{L}(X)$, $T \geq 0$ by
\begin{equation}\label{eq:induced trace}
\Tr_{\tau}^{X}(T) = \sup_{H\subset X} \sum_{\xi \in H}
 \tau( \la \xi, T \xi \ra),
\end{equation}
where the supremum is taken over all finite subsets $H \subset X$ such that
$\sum_{\xi \in H} \theta_{\xi, \xi} \leq 1$.  Then $\Tr_{\tau}^{X}$ is strictly lower semicontinuous  and satisfies \begin{equation*}\label{eq:induced trace}
\Tr_{\tau}^X(T) = \lim_k \sum_{\xi \in H_k} \tau( \la \xi, T \xi \ra ) \quad \text{for all} \: T \in \LL(X)_+.
\end{equation*}
whenever $H_k$ is  a generalized sequence of finite sets in $X$ such that
$\{ \sum_{\xi \in H_k} \theta_{\xi,\xi} \}_k$ is   an approximate unit in $\mathcal{K}(X)$, moreover, $\Tr_{\tau}^{X}$
extends to a positive semifinite trace on $\LL (X)$.

 We are interested in the restriction of $\Tr_{\tau}^X$  to the image of $A$ inside
$\LL (X)$ given by the left action. Recall from  \cite{LN04} that for each $C^*$-correspondence $X$ over $A$ there is an operator $F_X$ mapping a tracial positive linear functional $\tau$ on $A$  to a possibly infinite positive trace $F_X\tau$ on $A$ defined by
\begin{equation*}
(F_X\tau)(a)=\Tr_\tau^X(a)
\end{equation*}
for $a\in A_+$. Given a product system
 $ \{X_p\}_{p \in P}$ of essential $C^*$-correspondences over $P$, we follow
\cite[Section 1.4]{ALN18} and write $F_p$ instead of $F_{X_p}$ and $\Tr_\tau^p$ instead of $\Tr_\tau^{X_p}$ for all $p\in P$. Let $\varphi_p$ be the left action on $X_p$ and recall that if
$(\Tr_\tau^q\circ \varphi_p)\vert_{A}$ is a finite positive trace on $A$ for some $q\in P$, then we can use induction in stages \cite[Proposition 1.2]{LN04} and  factor
$F_{pq}(\tau)$ as $F_p(F_q(\tau))$ for all $p\in P$. In other words, for $r=pq$ so that $F_{q}(\tau) = \Tr_{\tau}^{q}|_{A_+}$ is finite we have
\begin{equation}\label{eq:Tr-decomp}
\Tr_{\tau}^r(a)=\Tr_{F_{q}(\tau)}^p(a) \text{ for }a\in A_+.
\end{equation}

In general, the induced trace $\Tr_\tau^q$, or rather $\Tr_\tau^q\circ \varphi_p$, need not be a finite trace on $A$. However, we recall from \cite{ALN18} that
{if $\tau$ satisfies condition \eqref{eqn:2.1}
with $J = \{q\}$, then}
\begin{align}\label{eq:induced-trace-from-KMS-is-finite}
\Tr_{\tau}^{q}(a) \leq \tau(a)N(q)^{\beta} < \infty \quad
\text{for all $a \in A_+$.}
\end{align}
In particular,  $\Tr_{\tau}^q$ is a finite trace on $A$ whenever
$\tau$ is the restriction of a $\KMS_{\beta}$-state.

\subsection{A characterisation of KMS states on $\mathcal{NT}(X)$}

We will now extend {\cite[Theorem 2.1]{ALN18}} to the case of Nica-Toeplitz algebras $\mathcal{NT}(X)$ where $X$ is a product system over a right LCM monoid. Before we can state the generalisation we need some preparation about induced traces for product systems over monoids that have nontrivial invertible elements.

Suppose that $X$ is a product system over a right LCM monoid $P$ with $X_e=A$. Recall that $\varphi_p$ is the left action in $X_p$ and $I_p$  the identity operator in $\mathcal{L}(X_p)$, for all $p\in P$. As observed in \cite{BLS2}, for each $u \in P^*$ the map $T\mapsto T\otimes I_{u^{-1}}$ is an adjunction from $X_u$ to  $X_{u^{-1}}$ in the sense of \cite[Definition 2.17]{CCH}, meaning that it induces a natural isomorphism
\[
\mathcal{L}(Z\otimes_A X_u, Y)\to \mathcal{L}(Z,Y\otimes_A X_{u^{-1}})
\]
for any Hilbert $A$-modules $Y,Z$. One consequence is that by \cite[Theorems 4.4(2) and 4.13]{KPW}, see also \cite[Theorem 2.24]{CCH}, it necessarily must hold that $\varphi_u(a)\subseteq \mathcal{K}(X_u)$ for $u\in P^*$ and $a\in A_+$. A second consequence of the existence of this adjunction is that its restriction to the generalised compact operators is an isomorphism
\[
\mathcal{K}(Z\otimes_A X_u, Y)\to \mathcal{K}(Z,Y\otimes_A X_{u^{-1}}),
\]
cf. \cite[Corollary 3.9]{CCH}, in other words it yields  a local adjunction, for all $u\in P^*$. As in \cite{CCH}, we denote elements in $X_{u^{-1}}$ by $\xi^*$, where $\xi$ runs over $X_u$ (in the terminology of \cite{CCH}, the $C^*$-correspondence $X_{u^{-1}}$ is the conjugate of $X_u$). It follows from \cite[Corollary 3.31]{CCH} that $A$ admits a direct sum decomposition
\[
A=A_{u^{-1}}\oplus A_{u^{-1}}^{\perp}
\]
into two-sided ideals
$A_{u^{-1}}=\overline{\operatorname{span}}\{{\langle \xi_1^* ,\xi_2^*\rangle\mid \xi_1^*,\xi_2^*\in X_{u^{-1}}}\}$ and $A_{u^{-1}}^{\perp}=\operatorname{ker}(\varphi_{u})$. In particular, we have that $\varphi_u(A)= \varphi_u(A_{u^{-1}})$.

We now let $F_{p,q}:X_p\otimes_A X_q\to X_{pq}$ denote the subordinate isomorphisms of $C^*$-correspondences for $p,q\in P$. Note that for each $u\in P^*$, the map
\[
F_{u,u^{-1}}^{-1}:X_e\to X_u\otimes_A X_{u^{-1}}
\]
is adjointable with adjoint $F_{u,u^{-1}}$, and satisfies the properties of being a unit for the adjunction corresponding to tensoring by $X_u$ and $X_{u^{-1}}$. By uniqueness of the unit, the map $F_{u,u^{-1}}^{-1}$ is the adjoint of the map $\delta$  from \cite[Lemma 3.18]{CCH}, hence it corresponds to the left action $\varphi_u$ through a canonical isomorphism of $X_u\otimes_A X_{u^{-1}}$ onto $\mathcal{K}(X_u)$ identifying $\xi_1\otimes \xi_2^*$ with $\theta_{\xi_1,\xi_2}$, where $\xi_1,\xi_2\in X_u$, \cite[Proposition 3.29]{CCH}. Moreover, $F_{u,u^{-1}}$ can be identified with the map $\delta$, so
\[
F_{u,u^{-1}}(\xi_1\otimes \xi_2^*)=\langle \xi_1^*,\xi_2^*\rangle\text{ for }\xi_1^*,\xi_2^*\in X_{u^{-1}}.
\]
For the same reason, the map $F_{u^{-1},u}^{-1}:X_e\to X_{u^{-1}}\otimes_A X_u $ must coincide with $\varepsilon$ from \cite[Lemma 3.18]{CCH},
so it corresponds to the left action $\varphi_{{u^{-1}}}$ upon canonical identification of $\xi_1^*\otimes\xi_2$ with $\theta_{\xi_1^*,\xi_2^*}\in \mathcal{K}(X_{u^{-1}})$, \cite[Proposition 3.32]{CCH}. Further,
\[
F_{u^{-1},u}(\xi_1^* \otimes \xi_2)=\langle \xi_1,\xi_2\rangle\text{ for }\xi_1,\xi_2\in X_{u}.
\]
Since $F_{u^{-1},u}^{-1}\circ F_{u,u^{-1}}:X_u\otimes_A X_{u^{-1}}\to X_{u^{-1}}\otimes_A X_u$ maps $\xi\otimes \xi^*$ to $\xi^*\otimes \xi$, where $\xi\in X_u$, it follows that
\begin{equation}\label{eq:flip inner products}
\langle \xi^*,\xi^* \rangle=\langle \xi,\xi\rangle \text{ for all }\xi\in X_{u}.
\end{equation}

\begin{lem}\label{lem:induced traces constant on principal ideals}
Suppose that $X=\{X_p\}_{p \in P}$ is a product system of $C^*$-correspondences over a right LCM monoid $P$ with $X_e=A$.
 Suppose that $\tau$ is a tracial state on $A$. Then the trace on $A$ defined as
$\operatorname{Tr}_\tau^{X_u}(\varphi_u(a))$
for $u\in P^*, a\in A_+$ coincides with $\tau$. In particular, for every $p\in P$ and all $a\in A_+$ we have that \[
\operatorname{Tr}_\tau^{X_{pu}}(\varphi_{pu}(a))=\operatorname{Tr}_\tau^{X_p}(\varphi_p(a))=
\operatorname{Tr}_\tau^{X_{up}}(\varphi_{up}(a)).
\]
\end{lem}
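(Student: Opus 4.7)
The plan is to leverage the Morita-equivalence-like structure of $X_u$ for invertible $u$. The discussion preceding the lemma shows that $\varphi_u(A) \subseteq \mathcal{K}(X_u)$ and that $F_{u,u^{-1}}: X_u \otimes_A X_{u^{-1}} \to A$ is an isomorphism. Composing $F_{u,u^{-1}}$ with the canonical identification $X_u \otimes_A X_{u^{-1}} \simeq \mathcal{K}(X_u)$ (under which $\xi_1 \otimes \xi_2^* \leftrightarrow \theta_{\xi_1, \xi_2}$) yields a $*$-isomorphism $\Psi: \mathcal{K}(X_u) \to A$ with $\Psi(\theta_{\xi_1, \xi_2}) = \langle \xi_1^*, \xi_2^* \rangle$ and $\Psi^{-1} = \varphi_u$.

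For the first claim, direct computation on rank-one positive elements yields
\[
\Tr_\tau^{X_u}(\theta_{\xi, \xi}) = \tau(\langle \xi, \xi \rangle) = \tau(\langle \xi^*, \xi^* \rangle) = \tau(\Psi(\theta_{\xi, \xi}))
\]
using the definition of the induced trace and identity \eqref{eq:flip inner products}. Since the linear span of $\{\theta_{\xi, \xi}\}$ is dense in $\mathcal{K}(X_u)_{\mathrm{sa}}$ and $\tau \circ \Psi$ is a bounded trace, the lower semicontinuous trace $\Tr_\tau^{X_u}|_{\mathcal{K}(X_u)}$ coincides with $\tau \circ \Psi$, and substituting $\varphi_u(a) = \Psi^{-1}(a)$ gives $\Tr_\tau^{X_u}(\varphi_u(a)) = \tau(a)$. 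The very same density argument, using lower semicontinuity in place of boundedness, shows that $F_u(\psi) = \psi$ for any (possibly infinite) tracial weight $\psi$ on $A$. The identity $\Tr_\tau^{X_{pu}}(\varphi_{pu}(a)) = \Tr_\tau^{X_p}(\varphi_p(a))$ then follows from induction in stages \eqref{eq:Tr-decomp} applied to $pu = p \cdot u$, using that $F_u(\tau) = \tau$ is finite.

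The remaining identity $\Tr_\tau^{X_{up}}(\varphi_{up}(a)) = \Tr_\tau^{X_p}(\varphi_p(a))$ cannot be handled by induction in stages via $up = u \cdot p$, since $F_p(\tau)$ may be infinite. Instead, I would work through the isomorphism $X_{up} \simeq X_u \otimes_A X_p$ and a tensor-product approximate unit $\bigl\{\sum_{i,j} \theta_{\xi_i \otimes \rho_j, \xi_i \otimes \rho_j}\bigr\}$ built from approximate units in $\mathcal{K}(X_u)$ and $\mathcal{K}(X_p)$. A straightforward rearrangement of the induced trace formula, taking the supremum over $\rho_j$ first to produce $F_p(\tau)$, yields
\[
\Tr_\tau^{X_u \otimes X_p}(\varphi_u(a) \otimes I_p) = \sup_\mu \sum_i F_p(\tau)\bigl(\langle \xi_i, \varphi_u(a) \xi_i \rangle \bigr).
\]
The right-hand side is by definition $F_u(F_p(\tau))(a)$, which by the extension of the main claim to $\psi = F_p(\tau)$ equals $F_p(\tau)(a) = \Tr_\tau^{X_p}(\varphi_p(a))$.

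The main obstacle is to verify that the main claim extends to (possibly infinite) tracial weights; this requires replacing the boundedness argument with a lower-semicontinuity argument relying on density of rank-one operators in $\mathcal{K}(X_u)_+$. Once this extension is secured, the tensor-product approximate-unit computation circumvents the finiteness requirement of induction in stages and delivers the third identity uniformly, in both the finite and infinite cases.
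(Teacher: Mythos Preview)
Your proof is correct. For the main claim, your argument and the paper's are essentially the same: both reduce to the identity $\langle\xi,\xi\rangle=\langle\xi^*,\xi^*\rangle$ from \eqref{eq:flip inner products} on rank-one positives and then extend by density. You package the computation through the isomorphism $\Psi:\mathcal{K}(X_u)\to A$, while the paper writes out the approximate-unit sum $\sum_{\eta}\tau(\langle\eta,\varphi_u(a)\eta\rangle)$ explicitly with $a=\langle\xi^*,\xi^*\rangle$ and uses the trace property to collapse it to $\tau(\langle\xi,\xi\rangle)$; the content is identical.

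The difference lies in the ``in particular'' part. The paper dispatches both displayed equalities in one line by citing induction in stages \cite[Proposition~1.2]{LN04} twice, once with the invertible factor on the right and once with it on the left. You worry---reasonably, given how \eqref{eq:Tr-decomp} is phrased in this paper---that the decomposition $up=u\cdot p$ requires $F_p(\tau)$ to be finite, and so you first extend the main claim to arbitrary tracial weights and then redo the induction-in-stages computation by hand via tensor-product approximate units. Your workaround is correct, but the extra effort is unnecessary: the original statement of \cite[Proposition~1.2]{LN04} covers possibly infinite induced traces, so the paper's direct citation is legitimate even when $F_p(\tau)$ is infinite. What your route buys is self-containment (you do not need the general form of induction in stages), at the cost of reproving a special case of it.
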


\begin{proof} We recall that $\operatorname{Tr}_\tau^{Y}(\varphi(a))$ is defined in \cite[Proposition 1.2]{LN04} for an arbitrary $C^*$-correspondence $Y$ over $A$. Given $u\in P^*$, let $(e_J)_J$ with $e_J\leq I_u$ be an approximate unit for $\mathcal{K}(X_u)$ where $e_J=\sum_{\eta\in J}\theta_{\eta,\eta}$. Let $a=\langle \xi^*,\xi^*\rangle\in A_+$ for $\xi\in X_{u}$. We have that
\begin{align*}
\operatorname{Tr}_\tau^{X_u}(\varphi_u(a))
&=\operatorname{sup}_J \sum_{\eta\in J} \tau(\langle \eta,\varphi_u(a)\eta\rangle)\\
&=\operatorname{sup}_J \sum_{\eta\in J} \tau(\langle \eta, \langle\xi^*,\xi^*\rangle\eta\rangle) \\
&=\operatorname{sup}_J \sum_{\eta\in J} \tau(\langle \eta, \theta_{\xi,\xi}\eta\rangle)\\
&=\operatorname{sup}_J \sum_{\eta\in J} \tau(\langle \eta, \xi\rangle \langle \xi,\eta\rangle)\\
&=\operatorname{sup}_J \sum_{\eta\in J} \tau(\langle \xi,\eta\rangle\langle \eta, \xi\rangle)\text{ since }\tau\text{ is a trace}\\
&=\operatorname{sup}_J \sum_{\eta\in J} \tau(\langle\xi,\theta_{\eta,\eta}\xi\rangle)\\
&=\tau(\langle \xi,\xi\rangle)\text{ since }e_J\text{ is an approximate unit}\\
&=\tau(a)
\end{align*}
by the assumption on $(e_J)$ and \eqref{eq:flip inner products}. By linearity and continuity we have that $\operatorname{Tr}_\tau^{X_u}(\varphi_u(a))=\tau(a)$ for all $a\in A_+$. The displayed equalities follow by two applications of \cite[Proposition 1.2]{LN04}, namely by letting $q\in P^*$ and, respectively, $p\in P^*$ in equation \eqref{eq:Tr-decomp}.
\end{proof}

\begin{lem}\label{lem:Tr-finite-onA-from-S}
Let  $P$ be a Noetherian right LCM monoid, $\{X_p\}_{p\in P}$ a compactly aligned product system of essential
$C^*$-correspondences over $P$ with $X_e=A$,  $N: P \to (0,\infty)$ a multiplicative homomorphism, $\tau$ a finite positive trace on $A$ and $\beta\in \mathbb{R}$.
\begin{enumerate}
\item[(i)]\label{eq: Tr dominated by multiple of tau}
Suppose that $\Tr_{\tau}^s(a) \leq N(s)^{\beta}\tau(a)$ for all $s\in P_a, \ a\in A_+$. Then $\Tr_{\tau}^p(a) \leq N(p)^{\beta}\tau(a)$ for all $p \in P$ and $a \in A_+$. In particular $\Tr_{\tau}^p\vert_A$ is a tracial  linear functional on $A$ for every $p$.

\item[(ii)]\label{eq:Tr-tau-U-finite}
 Suppose that $\Tr_\tau^p\vert_A$ is finite for every $p\in P$ and for each finite subset $U$ of  $P$ with $\lor U < \infty$  set $\tau_U =\Tr_{\tau}^{\lor U}\vert_A$. Then $\Tr_{\tau_U}^p(a) < \infty$ for all $p \in P$ and $a \in A_+$.
\end{enumerate}
\end{lem}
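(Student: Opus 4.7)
The goal is to establish (i) a propagation of the atom-only bound to all of $P$ via an inductive factorisation argument, and (ii) finiteness of further induced traces by a direct application of induction in stages.

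For (i), I plan to induct on the minimum number $\ell(p)$ of atom factors appearing in a factorisation of $p$ as a product of elements of $P_a\cup P^*$, which exists since $P$ is Noetherian and hence generated by $P_a\cup P^*$ (as recalled in \secref{sec:background}). The base case $\ell(p)=0$ means $p\in P^*$, and \lemref{lem:induced traces constant on principal ideals} together with the standing assumption $N\vert_{P^*}=1$ gives $\Tr_\tau^p(a)=\tau(a)=N(p)^\beta\tau(a)$. Since a product of an atom with an invertible is again an atom, trailing invertibles can always be absorbed, so for $\ell(p)\geq 1$ one may write $p=qx$ with $x$ a single generator (either an atom or an invertible) and $\ell(q)<\ell(p)$.

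For the inductive step, the base case shows $\tau_x:=\Tr_\tau^x\vert_A$ is a finite positive linear functional with $\tau_x\leq N(x)^\beta\tau$, so induction in stages \eqref{eq:Tr-decomp} applies to $p=qx$ and yields
\[
\Tr_\tau^p(a)=\Tr_{\tau_x}^q(a),\qquad a\in A_+.
\]
The formula \eqref{eq:induced trace} for the induced trace is manifestly monotone in the tracial argument on positive operators, so the bound on $\tau_x$ transfers to $\Tr_{\tau_x}^q(a)\leq N(x)^\beta\Tr_\tau^q(a)$; combining with the induction hypothesis $\Tr_\tau^q(a)\leq N(q)^\beta\tau(a)$ produces the desired inequality. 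Finiteness together with the fact that $\Tr_\tau^p$ is always a trace on $\LL(X_p)$ then makes $\Tr_\tau^p\vert_A$ a tracial linear functional on $A$. For (ii), induction in stages \eqref{eq:Tr-decomp} is applied directly to the product $p(\vee U)$, using that $\tau_U=\Tr_\tau^{\vee U}\vert_A$ is finite by hypothesis, to obtain $\Tr_{\tau_U}^p(a)=\Tr_\tau^{p(\vee U)}(a)$; the right-hand side is finite by the assumption that $\Tr_\tau^r\vert_A$ is finite for every $r\in P$.

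The principal obstacle in (i) is that the hypothesis supplies a bound only when the tracial state is $\tau$ itself, while the recursion naturally produces new states $\tau_x$ for which no atomic hypothesis is available. The key idea that sidesteps this is to never reapply the hypothesis to $\tau_x$: instead the inequality $\tau_x\leq N(x)^\beta\tau$ is carried through the next inducing step using monotonicity of $\Tr_\bullet^q$ in its tracial argument, so only the original hypothesis on $\tau$ is ever invoked. The presence of nontrivial invertible elements is a second subtle point, but it reduces to bookkeeping once \lemref{lem:induced traces constant on principal ideals} and the assumption $N\vert_{P^*}=1$ are invoked to make invertible factors neutral in the induction.
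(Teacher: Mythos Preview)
Your proposal is correct and follows essentially the same approach as the paper: induction on the number of atoms in a factorisation of $p$, with \lemref{lem:induced traces constant on principal ideals} handling invertible factors and monotonicity of $\Tr_\bullet^q$ in the tracial argument carrying the bound through each step; part (ii) is identical. The only cosmetic difference is that the paper strips an atom from the \emph{left} (writing $q=gs_1p$, applying the inductive hypothesis to the right factor $p$ first and the atomic hypothesis to $s_1$ last), whereas you strip from the right (writing $p=qx$, applying the atomic hypothesis to $x$ first and the inductive hypothesis to $q$ last).
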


\begin{proof} Let $N_0(p)=N(p)^{\beta}$ for all $\in P$. Assume the hypothesis of (i) and let $p \in P$. If $p\in P^*$, then $\Tr_{\tau}^p(a)=\tau(a)=N_0(p)\tau(a)$ for all $a\in A_+$ by \lemref{lem:induced traces constant on principal ideals}. If $p\in P\setminus P^*$, then the element  $p$ can be expressed as a product of  the form $p=gs_1g_1s_2g_2\dots s_ng_n$, for some $n\geq 1$, with $s_j\in P_a$ and $g, g_j$  in a generating set for $P^*$ (which we assume contains the identity) for each $j=1,\dots,n$, see \secref{sec:background}.
We will prove the claim by induction on the number of atoms in the expression of $p$.

Let first $p = gs_1g_1$ where $s_1 \in P_a$ and $g,g_1\in P^*$. Then for  $a\in A_+$ we have
\[\Tr_{\tau}^p(a)=\Tr_{\tau}^{s_1}(a)\leq N_0(s_1)\tau(a)=N_0(p)\tau(a)
\] by \lemref{lem:induced traces constant on principal ideals} and our hypothesis

 Assume now that $\Tr_{\tau}^p\vert_A \leq N_0(p)\tau$ for all $p \in P$ that can be expressed as a product of generators containing $n$ atoms, for $n\geq 2$. Let $q \in P$ be of the form $q=gs_1g_1s_2g_2\dots s_ng_ns_{n+1}g_{n+1}$, with $s_j\in P_a$ and $g,g_j$ in a generating set for $P^*$
 for $j =1, \dots, n+1$. Set $p=g_1s_2g_2\dots s_ng_ns_{n+1}g_{n+1}$, so that $q=gs_1p$.

By the inductive hypothesis applied to $p$, $\tau_0=\Tr_{\tau}^p\vert_A$ is a finite trace on $A$. Then, applying
\lemref{lem:induced traces constant on principal ideals}, equation \eqref{eq:induced trace}, the inductive hypothesis on $p$ and our hypothesis on $s_1$,  we get that
\begin{align*}
\Tr_{\tau}^q(a) = \Tr_{\tau}^{gs_1p}(a) &=\Tr_{\tau_0}^{s_1}(a)= \sup_F \sum_{\xi \in F} \tau_0(\la \xi,a\xi \ra) \\
&\leq \sup_F \sum_{\xi \in F} N_0(p) \tau ( \la \xi, a \xi \ra) \\
&=N_0(p) \Tr_{\tau}^{s_1}(a) \leq N_0(p)N_0(s_1) \tau (a)\\
&=N_0(q)(a)
\end{align*}
for all $a \in A_+$, where $F$ are finite subsets of $X_{s_1}$. We can conclude  that $\Tr_{\tau}^q(a) \leq N_0(q) \tau(a) < \infty$ for all
$a \in A_+$ and all $q \in P$. This shows (i).\\
For (ii), fix a finite subset $U \subset P$ with $\lor U < \infty$. We have that $\tau_U=\Tr_{\tau}^{\vee U}$ is a finite trace on $A$ by assumption. Then
\begin{align*}
\Tr_{\tau_U}^p(a)  = \Tr_{\tau}^{p(\vee U)}(a) < \infty
\end{align*}
for all $a \in A_+$ and $p \in P$ follows by \cite[Proposition 1.2]{LN04}, see equation \eqref{eq:Tr-decomp}.
\end{proof}
We shall need generalisations of \cite[Theorem 2.1]{ALN18} and \cite[Theorem 5.1]{ALN18} valid for right LCM monoids.

\begin{thm}[]\label{thm:ALN 2.1 weak qlo}\textup{(cf. \cite[Theorem 2.1]{ALN18})}
Assume that $P$ is a right LCM monoid and $X=\{X_p\}_{p \in P}$ is a compactly aligned
product system of essential $C^*$-correspondences over $P$ with $X_e=A$. Consider the time evolution on $\Nica \T (X)$
defined by a homomorphism $N: P \to (0,\infty)$, and assume $\phi$ is a
 $\KMS_{\beta}$-state on $\Nica \T (X)$ for some $\beta \in \R$. Then
$\tau = \phi|_{A}$ is a tracial state on $A$ satisfying condition \eqref{eqn:2.1}
for all  finite $J \subset P\bs \{e\}$ and $a\in A_+$.
\end{thm}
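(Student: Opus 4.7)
The plan is to adapt the strategy of \cite[Theorem 2.1]{ALN18} to the right LCM setting. The two new features to manage are that $P$ may have nontrivial invertible elements and that a right LCM is defined only up to right multiplication by an element of $P^*$; both are controlled by \lemref{lem:induced traces constant on principal ideals}, which guarantees that $\Tr_\tau^{\vee K}$ does not depend on the particular choice of representative.

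That $\tau := \phi|_A$ is tracial is immediate: since $N(e)=1$, every $a\in A=X_e$ is $\sigma$-fixed and hence analytic, so applying the KMS condition gives $\tau(ab)=\phi(b\sigma_{i\beta}(a))=\phi(ba)=\tau(ba)$ for all $a,b\in A$.

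The core of the argument is to produce a positive element of $\NT(X)$ whose value under $\phi$ equals the left-hand side of \eqref{eqn:2.1}. For each $q\in P$ I introduce the projection $Q_q\in M(\NT(X))$ obtained as the strict limit of $i_X^{(q)}(\varphi_q(e_\lambda))$ along an approximate unit $(e_\lambda)$ of $A$, formally $Q_q = i_X^{(q)}(I_q)$. Nica covariance together with $\iota^r_q(I_q)=I_r$ for $q\leq r$ yields the product rule $Q_p Q_q = Q_{p\vee q}$ if $pP\cap qP\neq\emptyset$ and $Q_pQ_q = 0$ otherwise; the right-hand side is unambiguous because $(p\vee q)P$ depends only on $\{p,q\}$. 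A short calculation with the left action shows that $aQ_q = Q_q a = i_X^{(q)}(\varphi_q(a))$ for every $a\in A$, so the $Q_q$ pairwise commute and each commutes with every element of $A$. Given $a\in A_+$ and a finite $J\subset P\setminus\{e\}$, the element
$$
b_J(a) := a^{1/2}\prod_{q\in J}(1-Q_q)\,a^{1/2}
$$
is then positive in $\NT(X)$, and expanding the product yields
$$
b_J(a) = a + \sum_{\emptyset\neq K\subset J}(-1)^{|K|}\,i_X^{(\vee K)}(\varphi_{\vee K}(a)),
$$
with the convention that summands corresponding to $\vee K=\infty$ are omitted.

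To evaluate $\phi$ on this sum, the key identity
$$
\phi\bigl(i_X^{(r)}(\varphi_r(a))\bigr) = N(r)^{-\beta}\,\Tr_\tau^r(a), \qquad r\in P,\ a\in A_+,
$$
is obtained as follows. Applying the KMS condition to the analytic elements $i_X(\xi)i_X(\eta)^*$ with $\xi,\eta\in X_r$, and using $i_X(\eta)^*i_X(\xi) = \langle\eta,\xi\rangle\in A$, gives $\phi(i_X(\xi)i_X(\eta)^*) = N(r)^{-\beta}\tau(\langle\eta,\xi\rangle)$. Linearity extends this to $\phi(i_X^{(r)}(k)) = N(r)^{-\beta}\Tr_\tau^r(k)$ for $k\in\mathcal{K}(X_r)_+$; the final passage to $\varphi_r(a)\in\mathcal{L}(X_r)_+$ is handled by approximating through the net $\varphi_r(a^{1/2})\varphi_r(e_\lambda)\varphi_r(a^{1/2})=\varphi_r(a^{1/2}e_\lambda a^{1/2})$ of compact operators, using strict continuity of $i_X^{(r)}$, norm continuity of $\phi$, and lower semicontinuity of $\Tr_\tau^r$. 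Substituting the identity into the expansion of $b_J(a)$ and invoking \lemref{lem:induced traces constant on principal ideals} to confirm that $\Tr_\tau^{\vee K}$ is insensitive to the choice of representative of $\vee K$ modulo $P^*$ produces exactly the left-hand side of \eqref{eqn:2.1}, which is nonnegative since $b_J(a)\geq 0$. The main technical point is the non-unital extension of the key identity in this final step; once that is in place, the remainder is a formal combinatorial expansion with inclusion-exclusion.
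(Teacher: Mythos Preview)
Your overall strategy matches the paper's (and \cite{ALN18}'s): build a positive element whose $\phi$-value equals the left side of \eqref{eqn:2.1} via projections indexed by $P$, then identify each summand with an induced trace. Your invocation of \lemref{lem:induced traces constant on principal ideals} to handle the $P^*$-ambiguity of $\vee K$ is exactly the first of the two points the paper singles out.

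The gap is in where you place the projections. You assert $Q_q\in M(\NT(X))$ as the strict limit of $i_X^{(q)}(\varphi_q(e_\lambda))$, but for $q\neq e$ the homomorphism $i_X^{(q)}\colon\mathcal{K}(X_q)\to\NT(X)$ is not nondegenerate (for instance $i_X(A)$ is not in the closure of $i_X^{(q)}(\mathcal{K}(X_q))\NT(X)$), so it admits no strictly continuous extension to $\mathcal{L}(X_q)$ and the strict limit need not exist. The same problem recurs downstream: in general $\varphi_q(e_\lambda)$ and $\varphi_r(a^{1/2}e_\lambda a^{1/2})$ are \emph{not} compact, so the expressions $i_X^{(q)}(\varphi_q(e_\lambda))$ and $i_X^{(r)}(\varphi_r(a))$ that you manipulate are undefined, and your appeal to ``strict continuity of $i_X^{(r)}$'' is unfounded.

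The paper avoids this by passing to the GNS triple $(\pi,H,\xi)$ of $\phi$ and using the projections $\alpha_p^{\psi}(1)\in\psi_e(A)'\subset B(H)$ from \cite[Proposition~4.1]{F02} (see \cite[Lemma~2.28 and Proposition~2.30]{KL2} for the right LCM case), which arise as \emph{strong} limits of $\pi(i_X^{(p)}(u_j))$ for an approximate unit $(u_j)$ of $\mathcal{K}(X_p)$. In $B(H)$ these limits always exist, the product rule $\alpha_p^{\psi}(1)\alpha_q^{\psi}(1)=\alpha_r^{\psi}(1)$ for $pP\cap qP=rP$ holds, and the vector state extends $\phi$ so that your key identity becomes rigorous. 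Your argument becomes correct once you move from $M(\NT(X))$ to $B(H_\phi)$ (equivalently, to $\NT(X)^{**}$ with weak-$*$ limits and the normal extension of $\phi$); the combinatorial expansion and the passage from rank-one to general positive compacts then go through exactly as you wrote.
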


\begin{proof}
The argument follows the one from \cite{ALN18}. Let $(\pi,H,\xi)$ be the Hilbert space of the GNS representation of $\NT(X)$ from  $\phi$.
 There are two main points where we need an extension to the right LCM case. First,  we need to know that
\[
N(pu)^{-\beta}\operatorname{Tr}_\tau^{X_{pu}}(\varphi_{pu}(a))=N(p)^{-\beta}\operatorname{Tr}_\tau^{X_p}(\varphi_p(a))
\]
for all $p\in P, u\in P^*$ and $a\in A$. Since $N(u)=1$ because $N$ is a homomorphism, this claim follows directly from \lemref{lem:induced traces constant on principal ideals}.

Second,  we need an analogue of the projections $f_p$, $p\in P$ from \cite{ALN18}. For each $p\in P$, let $1_{pP}$ denote the characteristic function  of the set
$\{q\in P:p\leq q\}$. Then  $B_P=\overline{\operatorname{span}}\{1_{pP}: p\in P\}$ is a commutative $C^*$-subalgebra of
$l^\infty(P)$. Furthermore, just as in the case of quasi-lattice ordered pairs cf \cite[Section 2]{ALN18}, the set $\{1_{pP}\mid p\in P\}$ is linearly independent.

Since the representation $\pi:\mathcal{NT}(X) \to B(H)$  corresponds to a Nica covariant representation $\psi:X\to B(H)$ through $\psi=\pi\circ i_X$, we let $\alpha_p^{\psi}(1)$ be the projection in $\psi_e(A)'\subset B(H)$ from \cite[Proposition 4.1]{F02} (which is  the case of quasi-lattice orders, see \cite[Lemma 2.28]{KL2}  for the right LCM case). We have that $\alpha_p^{\psi}(1)\alpha_q^{\psi}(1)=\alpha_r^{\psi}(1)$ whenever $pP\cap qP=rP$ and $\alpha_p^{\psi}(1)\alpha_q^{\psi}(1)=0$ when $p$ and $q$ do not admit a common right multiple, see  \cite[Proposition 9.5 and Lemma 9.3]{KL1} in connection with \cite[Lemma 2.4]{KL2}. By \cite[Proposition 2.30]{KL2}, there is a representation $L_\pi$ of $B_P$ on $H$ determined by $L_\pi(1_{pP})=\alpha_p^{\psi}(1)$.  In particular,  $\alpha_p^{\psi}(1)$ is the strong limit of $\pi(i_X^{(p)}(u_j))$ for every approximate unit $(u_j)_j$ in $\mathcal{K}(X_p)$. Thus with projections $\alpha_p^{\psi}(1)$ instead of $f_p$, for $p\in P$,  the proof of \cite[Theorem 2.1]{AaHR18} can be adapted verbatim to the case when $P$ is right LCM.
\end{proof}

\subsection{Gauge-invariant KMS states}
It is natural to ask if the necessary condition \eqref{eqn:2.1} describing a KMS state is also sufficient for its existence. A positive answer is given in \cite[Theorem 5.1]{ALN18}, as we now briefly recall. Suppose that $X$ is a compactly aligned product system $X$ over a monoid $P$ in a quasi-lattice ordered pair $(G, P)$. Then there exists a canonical coaction $\delta^G$ of $G$ on $\NT(X)$ that yields a grading into a family of subspaces $\NT(X)_g$ for $g\in G$, cf. \cite[Proposition 3.5]{CLSV11}. A state $\phi$ on $\NT(X)$ is gauge-invariant if it vanishes on all $\NT(X)_g$ where $g\neq e$, in other words if $\phi$ factors through the conditional expectation arising from $\delta^G$, which maps $\NT(X)$ onto its core subalgebra. Then  the map $\phi\mapsto \phi\vert_A$ defines a one-to-one correspondence between the gauge-invariant
$\KMS_\beta$-states on $\Nica \T(X)$ and the tracial states on $A$ satisfying \eqref{eqn:2.1}, cf. \cite[Theorem 5.1]{ALN18}.

In order to extend this result to the case of product systems over right LCM monoids, the immediate question arises as to what should gauge-invariance mean in the absence of an ambient group containing the given monoid. The answer we suggest is that the state should factor through a canonical conditional expectation $E$ of $\NT(X)$, similar to the case of a quasi-lattice ordered  pair $(G, P)$. The reason for this is that the Fock module $\bigoplus_{p\in P} X_p$ induces a grading of $P$ on the reduced Nica-Toeplitz algebra $\NT^r(X)$, and this grading alone suffices to give rise to the desired conditional expectation on $\NT(X)$. We next recall this construction, see \cite{KL1, KL2}.

 Given a compactly aligned product system of (essential) $C^*$-correspondences over a  right LCM monoid $P$, there is a natural Nica covariant representation $l$ of $X$ on
$\bigoplus_{p\in P} X_p$, the Fock representation. For the sake of clarity we use the symbol $l$ for this representation, just as in \cite{F02} (but unlike e.g. \cite{KL2, ALN18}), and reserve $\ell$ for a length function on a monoid. The $C^*$-algebra generated by $l$ is the reduced Nica-Toeplitz algebra $\NT^r(X)$,  and  there is  a canonical surjective $*$-homomorphism $\Lambda:\NT(X)\to \NT^r(X)$, see \cite[Section 2]{KL2}.

The next result is implicit in \cite{KL1},  where it is formulated using the $C^*$-precategory picture of $\NT^r(X)$ and $\NT(X)$, see \cite[Proposition 5.4]{KL1}, respectively \cite[Corollary 6.5]{KL1}. For the sake of completeness and easy reference we state it in a form suitable to the more familiar picture of $\NT^r(X)$  viewed as the closure of the subalgebra spanned by monomials $l(\xi)l(\eta)^*$ for  $\xi\in X_p, \eta\in X_q, p,q\in P$ and similarly with
$\NT(X)$ as the closure of the subalgebra spanned by monomials $i_X(\xi)i_X(\eta)^*$. Denote by  $d:X\to P$  the degree homomorphism of monoids so that the fibre over  $p\in P$ is $X_p=d^{-1}(\{p\})$. By definition, the \emph{core subalgebra} of $\NT(X)$ is the fibre over the identity, namely
\[
B_e^{i_X}=\overline{\operatorname{span}}\{i_X(\xi)i_X(\eta)^*\mid \xi,\eta\in X, d(\xi)=d(\eta)\}
\]
and $B_e^l$ of $\NT^r(X)$ as
\[
B_e^l=\overline{\operatorname{span}}\{l(\xi)l(\eta)^*\mid \xi,\eta\in X, d(\xi)=d(\eta)\}.
\]

\begin{prop} Let $X$ be a compactly aligned product system of essential $C^*$-correspondences over a cancellative right LCM monoid $P$, where $X_e=A$. Then
there is a
faithful conditional expectation $E^r:\NT^r(X)\to B_e^l$ such that
\begin{equation}\label{eq:def expectation reduced NT algebra}
 E^r(\sum_{\xi,\eta\in F}l(\xi)l(\eta)^*)= \sum_{\{\xi\in F\mid d(\xi)=d(\eta)\}}l(\xi)l(\eta)^*,
\end{equation}
where $F$ is a finite subset of $X=\bigcup_p X_p$.

Further, there is a conditional expectation $E:\NT(X)\to B_e^{i_X}$ such that
\begin{equation}\label{eq:def expectation full NT algebra}
E(\sum_{\xi,\eta\in F} i_X(\xi)i_X(\eta)^*)=\sum_{\{\xi\in F\mid d(\xi)=d(\eta)\}}i_X(\xi)i_X(\eta)^*,
\end{equation}
where $F$ is a finite subset of $X=\bigcup_p X_p$.
\end{prop}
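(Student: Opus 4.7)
The plan is to realise $E^r$ as a block-diagonal compression on the Fock module $F(X)=\bigoplus_{p\in P}X_p$, and then transport it back to $\NT(X)$ through the canonical surjection $\Lambda:\NT(X)\to\NT^r(X)$. For each $p\in P$ let $Q_p\in\LL(F(X))$ be the orthogonal projection onto $X_p$; these projections are pairwise orthogonal and satisfy $\sum_p Q_p=I$ in the strong operator topology. I would then define
\[
E^r(T):=\text{SOT-}\!\lim_{F}\sum_{p\in F}Q_p\,T\,Q_p\qquad (T\in \LL(F(X))),
\]
with the limit taken over the net of finite $F\subset P$. Convergence and boundedness follow from the orthogonality estimate
\[
\Big\|\sum_{p\in F}Q_p T Q_p\zeta\Big\|^2=\sum_{p\in F}\|Q_p T\zeta_p\|^2\le \|T\|^2\sum_p\|\zeta_p\|^2=\|T\|^2\|\zeta\|^2
\]
valid for any $\zeta=\sum_r\zeta_r\in F(X)$, so $E^r$ is a completely positive contraction on $\LL(F(X))$.

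The next step is to compute $E^r$ on the dense spanning set of monomials $l(\xi)l(\eta)^*$. Given $\xi\in X_s$ and $\eta\in X_t$, the operator $l(\eta)^*$ sends $X_r$ into $X_{r'}$ precisely when $r=tr'$, and then $l(\xi)$ sends $X_{r'}$ into $X_{sr'}$; hence $Q_p\,l(\xi)l(\eta)^*\,Q_p$ can be non-zero only if $p=tr'=sr'$ for some $r'\in P$, which by left cancellation forces $s=t$. This yields formula \eqref{eq:def expectation reduced NT algebra}; in particular $E^r(\NT^r(X))\subset B_e^l$ and $E^r|_{B_e^l}=\operatorname{id}$, so $E^r$ is an idempotent contraction onto $B_e^l$. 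For the bimodule property, note that every generator $l(\xi)l(\eta)^*$ of $B_e^l$ (with $d(\xi)=d(\eta)=s$) preserves each fibre $X_p$ with $s\le p$ and annihilates the rest, hence commutes with every $Q_p$; so for $a,b\in B_e^l$ and $T\in \NT^r(X)$ one gets $E^r(aTb)=\sum_p Q_p a T b Q_p=a\bigl(\sum_p Q_p T Q_p\bigr)b=a E^r(T)b$. Faithfulness is immediate: if $T\geq 0$ and $E^r(T)=0$, writing $T=S^*S$ gives $(SQ_p)^*(SQ_p)=0$, so $SQ_p=0$ for every $p$, and summing over $p$ strongly yields $S=0$.

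Finally, to construct $E$ I would invoke the standard Nica-Toeplitz core isomorphism $\Lambda|_{B_e^{i_X}}\colon B_e^{i_X}\xrightarrow{\;\sim\;}B_e^l$, implicit in the $C^*$-precategory framework of \cite[Proposition 5.4 and Corollary 6.5]{KL1}, and set
\[
E:=(\Lambda|_{B_e^{i_X}})^{-1}\circ E^r\circ \Lambda,
\]
which is a contractive conditional expectation $\NT(X)\to B_e^{i_X}$; tracing $i_X(\xi)i_X(\eta)^*$ through the three maps immediately produces formula \eqref{eq:def expectation full NT algebra}. The only conceptual obstacle in the whole argument lies not in the compression formula for $E^r$, which is routine once the fibre-preserving character of Nica-covariant diagonal elements is noticed, but in the injectivity of $\Lambda$ on the core algebra, where the right LCM combinatorics genuinely enter the picture.
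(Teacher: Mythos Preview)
Your construction of $E^r$ via fibre-wise compression on the Fock module is correct and more explicit than the paper's approach. The paper does not build $E^r$ directly; instead it observes that the Fock representation $l$ of the product system matches the $C^*$-precategory Fock representation $\mathbb{L}$ of \cite{KL1} via $\mathbb{L}(\Theta_{\xi,\eta})=l(\xi)l(\eta)^*$, and then simply imports $E^r$ from \cite[Equation~(5.4)]{KL1} and $E$ from \cite[Corollary~6.5]{KL1}. Your route has the advantage of being self-contained for $E^r$ and of making the Fock-module mechanism visible; the paper's route has the advantage of placing both expectations inside an existing framework where faithfulness and the passage through $\Lambda$ are already on record. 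For the second expectation $E$ the two arguments coincide: both rely on the injectivity of $\Lambda$ on the core, which you correctly flag as the only genuinely nontrivial input.

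Two small technical points. First, since $F(X)$ is a Hilbert $A$-module and not a Hilbert space, the displayed equality $\bigl\|\sum_{p\in F}Q_pTQ_p\zeta\bigr\|^2=\sum_{p\in F}\|Q_pT\zeta_p\|^2$ is not literally correct (norms of positive elements of $A$ are not additive); what does hold is the operator inequality $\langle Q_pT\zeta_p,Q_pT\zeta_p\rangle\le\|T\|^2\langle\zeta_p,\zeta_p\rangle$ in $A_+$, and summing this over $p$ gives $\langle\sum_pQ_pTQ_p\zeta,\sum_pQ_pTQ_p\zeta\rangle\le\|T\|^2\langle\zeta,\zeta\rangle$, which is all you need. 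Second, the cancellation you invoke to pass from $sr'=tr'$ to $s=t$ is \emph{right} cancellation rather than left; this is available because $P$ is assumed cancellative in the statement.
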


\begin{proof}
  A routine calculation shows that the Fock representation $l$ of the product system $X$ and the Fock representation $\mathbb{L}$ of the $C^*$-precategory associated to $X$ are related in the form
\[
\mathbb{L}(\Theta_{\xi,\eta})=l(\xi)l(\eta)^*, \quad \xi\in X_p, \quad \eta\in X_q, \quad p,q\in P,
\]
where $\Theta_{\xi,\eta}\in \mathcal{K}(X_q,X_p)$, as prescribed by \cite[Lemma 2.6]{KL2}. Inserting this in \cite[Equation (5.4)]{KL1} gives the claim about $E^r$. Using the formula for $E^r$ in the proof of  \cite[Corollary 6.5]{KL1} gives the claim about $E$.
\end{proof}

\begin{thm}\label{thm:sufficient gauge inv}\textup{(cf. \cite[Theorem 5.1]{ALN18})}
Assume that $X=\{X_p\}_{p \in P}$ is a compactly aligned product system of essential $C^*$-correspondences over a cancellative right LCM monoid $P$, with $X_e=A$, and consider the time evolution on  $\NT(X)$ defined by a homomorphism $N: P \to (0,\infty)$. For every $\beta \in \R$, the map $\phi\mapsto \phi\vert_A$ defines a one-to-one correspondence between
$\KMS_\beta$-states on $\Nica \T(X)$ that factor through the conditional expectation $E$ and the tracial states on $A$ satisfying \eqref{eqn:2.1}.
\end{thm}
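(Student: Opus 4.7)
The plan is to adapt the proof of \cite[Theorem 5.1]{ALN18}, using the canonical conditional expectation $E\colon \NT(X)\to B_e^{i_X}$ from the previous proposition as the replacement for the conditional expectation afforded by the dual coaction of an ambient group in the quasi-lattice-ordered case. The correspondence $\phi\leftrightarrow\tau$ will be implemented by $\tau=\phi|_A$ in one direction and $\phi=\phi_0\circ E$ in the other, where $\phi_0$ is the natural extension of $\tau$ to the core.

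Injectivity of $\phi\mapsto\phi|_A$ is a direct consequence of the KMS property. Since $\phi=\phi\circ E$, such a state vanishes on monomials $i_X(\xi)i_X(\eta)^*$ with $d(\xi)\neq d(\eta)$. For $\xi,\eta\in X_p$, the KMS condition, combined with the Toeplitz identity $i_X(\eta)^*i_X(\xi)=i_X(\langle\eta,\xi\rangle_A)$, forces
$$
\phi\bigl(i_X^{(p)}(T)\bigr)=N(p)^{-\beta}\Tr_\tau^p(T),\qquad T\in\mathcal{K}(X_p),
$$
so $\phi$ is uniquely determined by $\tau$ on a dense $*$-subalgebra of $\NT(X)$, hence everywhere by continuity.

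For the existence direction, given a tracial state $\tau$ on $A$ satisfying \eqref{eqn:2.1}, I would first apply inequality \eqref{eq:induced-trace-from-KMS-is-finite} at each $q\in P_a$ together with \lemref{lem:Tr-finite-onA-from-S}(i) to conclude that $\Tr_\tau^p|_A$ is a finite trace for every $p\in P$, so that each $\Tr_\tau^p$ extends to a finite trace on $\mathcal{K}(X_p)$. I would then define a linear functional $\phi_0$ on the algebraic $*$-subalgebra spanned by the elements $i_X^{(p)}(T)$, $T\in\mathcal{K}(X_p)$ and $p\in P$, by setting
$$
\phi_0\bigl(i_X^{(p)}(T)\bigr)=N(p)^{-\beta}\Tr_\tau^p(T),
$$
show that $\phi_0$ extends to a state on the core $B_e^{i_X}$, and put $\phi=\phi_0\circ E$. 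The KMS property then holds on the dense spanning family of analytic elements $i_X(\xi_p)i_X(\eta_q)^*$ by a direct calculation using the $\sigma$-invariance of $\phi_0\circ E$ and the defining formula for $\phi_0$.

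The principal obstacle is showing that $\phi_0$ is well defined and positive on $B_e^{i_X}$, since the generators $i_X^{(p)}(T)$ and $i_X^{(q)}(S)$ satisfy the nontrivial Nica-covariance relations. I would follow the strategy of \cite[Theorem 5.1]{ALN18}, writing any element supported on a finite $F\subset P$ in terms of the projections $\alpha_p^\psi(1)$ from the proof of \thmref{thm:ALN 2.1 weak qlo}, and using \lemref{lem:algebra of sets general} to decompose it as a linear combination of pieces indexed by the disjoint sets $p\Omega_J$. An inclusion--exclusion argument on the products of projections then reduces the positivity of $\phi_0$ precisely to the subinvariance inequalities \eqref{eqn:2.1}. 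Nontrivial invertible elements of $P$ are accommodated using \lemref{lem:induced traces constant on principal ideals}, which guarantees that each summand $N(\lor K)^{-\beta}\Tr_\tau^{\lor K}(a)$ depends only on the ideal $(\lor K)P$ and not on the choice of representative of $\lor K$; this is what forces the standing assumption that $N$ is trivial on $P^*$. Once $\phi_0$ is known to be a state on the core, positivity of $\phi=\phi_0\circ E$ is automatic from positivity of $E$.
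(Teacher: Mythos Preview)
Your overall architecture matches the paper's: restrict to $A$ for one direction, build $\phi_0$ on the core from $\tau$ and precompose with $E$ for the other. Two of your technical steps, however, are not quite as you describe them and would need to be reworked.

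First, your sketch of the positivity of $\phi_0$ invokes the projections $\alpha_p^\psi(1)$ from the proof of \thmref{thm:ALN 2.1 weak qlo}. Those projections live in the commutant of a \emph{given} representation (the GNS of an existing KMS state) and are the tool for the necessity direction; they are not available when you are trying to construct $\phi$. The paper instead follows \cite[Proposition~4.4 and Section~5]{ALN18} via \lemref{lem:gauge invariant}: one exhibits the core $B_e^{i_X}$ as an inductive limit $\overline{\bigcup_{\tilde F}B_{\tilde F}}$ over finite $\vee$-closed subsets of the quasi-lattice $P/_{\sim}$ (equivalence classes $pP^*$), and assembles $\phi_0$ as $\bigoplus_{[p]}\phi_{[p]}$ on this graded structure. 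The passage from $P$ to $P/_{\sim}$ is the point you are missing when $P^*\neq\{e\}$; your appeal to \lemref{lem:induced traces constant on principal ideals} is relevant, but it enters through this quotient, not through projections.

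Second, the phrase ``by a direct calculation'' for the KMS identity hides the one genuinely new argument in the right LCM setting. In the quasi-lattice case one checks $\phi(ba)=0$ for $b=i_X(\zeta)i_X(\eta)^*$, $a=i_X(\xi)$ with $pq\neq r$ by forming $qr^{-1}p$ in the ambient group. Here there is no group, so the paper rewrites $ba$ using Nica covariance to land in $i_X(X_{qr'})i_X(X_{p'})^*$ with $pp'=rr'=p\vee r$, and then uses \emph{right cancellation} in $P$ to show $qr'\neq p'$, whence $E(ba)=0$. You should flag this step explicitly; it is where the cancellativity hypothesis is actually used.
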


The proof follows the strategy laid down in \cite[Section 5]{ALN18}, which in itself follows by now classical ideas, see for example \cite{OP}.
\begin{lem}\label{lem:gauge invariant}
Assume the hypotheses of \thmref{thm:sufficient gauge inv}. Let $\tau$ be a tracial state on $A$ satisfying \eqref{eqn:2.1}. Then there is a unique state $\phi_0$ on $B_e^{i_X}$ such that
\begin{equation}\label{eq: construct state gauge invariant}
\phi_0(i_X(\xi)i_X(\eta)^*)=N(p)^{-\beta}\tau(\langle\eta, \xi \rangle) \text{ for }\xi,\eta\in X_p, p\in P.
\end{equation}
\end{lem}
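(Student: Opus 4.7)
The plan is to construct $\phi_0$ first on a dense $*$-subalgebra of $B_e^{i_X}$ and then extend by continuity, following the strategy of \cite[Section 5]{ALN18} with extra care for the invertible part $P^*$ of the right LCM monoid $P$.

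First I would extract the needed finiteness. Applying \eqref{eqn:2.1} with $J=\{s\}$ for each atom $s \in P$ gives $\Tr_\tau^s(a) \leq N(s)^\beta\tau(a)$ for $a\in A_+$, and \lemref{lem:Tr-finite-onA-from-S}(i) upgrades this to $\Tr_\tau^p(a)\leq N(p)^\beta\tau(a)$ for every $p\in P$. Consequently, the canonical extension of $\Tr_\tau^p$ to $\Kc(X_p)$, sending $\theta_{\xi,\eta}$ to $\tau(\la\eta,\xi\ra)$ on rank-one operators, is a finite positive trace; setting $\psi_p := N(p)^{-\beta}\Tr_\tau^p$, the prescribed formula becomes $\phi_0(i_X^{(p)}(\theta_{\xi,\eta})) = \psi_p(\theta_{\xi,\eta})$.

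Since each $X_p$ is essential and $i_X$ is faithful on $A$, the $*$-homomorphism $i_X^{(p)}\colon\Kc(X_p)\to B_e^{i_X}$ is injective, so $\psi_p\circ(i_X^{(p)})^{-1}$ is a well-defined positive functional on each fiber $i_X^{(p)}(\Kc(X_p))$. The heart of the lemma is to glue these fiber functionals into a coherent linear functional on the dense $*$-subalgebra $\mathcal{B}_0 := \operatorname{span}\{i_X^{(p)}(k) : p\in P,\ k\in\Kc(X_p)\}$ of $B_e^{i_X}$. Nica covariance makes $\mathcal{B}_0$ multiplicatively closed via $i_X^{(p)}(S)i_X^{(q)}(T) = i_X^{(r)}(\iota^r_p(S)\iota^r_q(T))$ for $r = p\vee q < \infty$ (and zero otherwise); the crucial compatibility is that $\sum_i i_X^{(p_i)}(T_i) = 0$ forces $\sum_i\psi_{p_i}(T_i) = 0$. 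Using Nica covariance to pull everything to a common right multiple, this reduces to identities among the $\psi_r$ that follow from induction in stages \eqref{eq:Tr-decomp} together with \lemref{lem:induced traces constant on principal ideals}. The latter furnishes $\psi_{pu}=\psi_p$ for every $u\in P^*$ and is exactly what absorbs the ambiguity produced by invertible elements, which is absent in the quasi-lattice setting of \cite{ALN18}.

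Finally, positivity of $\phi_0$ on $\mathcal{B}_0$ would follow by expanding $b^*b$ for $b=\sum_i i_X^{(p_i)}(T_i)$ via Nica covariance and invoking the inequalities \eqref{eqn:2.1} for the clique structure of $J=\{p_i\}_i$; together with $\phi_0(1) = \tau(1_A)$ (approximated by an approximate unit of $A$ if necessary) this yields a state, and uniqueness is immediate from the density of $\mathcal{B}_0$. The main obstacle I foresee is the cross-fiber consistency step, for which \lemref{lem:induced traces constant on principal ideals} is the essential technical input that makes the right LCM generalisation go through.
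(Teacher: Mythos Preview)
Your proposal has the right intuition—particularly that \lemref{lem:induced traces constant on principal ideals} is what absorbs the ambiguity coming from $P^*$—but the paper takes a different structural route, and your well-definedness step has a gap. The paper does not glue the fibre functionals directly over $P$; instead it passes to the quotient $P/_\sim$ by $p\sim pu$ ($u\in P^*$), observes via \cite[Section 6]{KL1} that $P/_\sim$ is a quasi-lattice in the abstract order-theoretic sense of \cite[Section 4]{ALN18}, writes $B_e^{i_X}=\overline{\bigcup_{\tilde F} B_{\tilde F}}$ as an inductive limit over finite $\vee$-closed subsets $\tilde F\subset P/_\sim$, and then invokes \cite[Proposition 4.4 and Section 5]{ALN18} wholesale to obtain $\phi_0=\bigoplus_{[p]}\phi_{[p]}$. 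The role of \lemref{lem:induced traces constant on principal ideals} then becomes implicit: it is what makes $\phi_{[p]}$ well defined on each class.

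Your suggestion to verify consistency by ``pulling everything to a common right multiple'' does not work as stated: a finite family $\{p_i\}$ in a right LCM monoid need not have any common upper bound, and even when $r=p\vee q$ exists the map $\iota^r_p$ does not carry $\Kc(X_p)$ into $\Kc(X_r)$ (compact alignment only guarantees that \emph{products} of inflated compacts are compact). The correct bookkeeping—filtration by finite $\vee$-closed subsets and inductive peeling-off of maximal elements—is precisely what \cite[Lemma 4.2 and Proposition 4.4]{ALN18} supplies and what the paper imports after passing to $P/_\sim$; your positivity sketch is missing the same structure. A minor additional point: your appeal to \lemref{lem:Tr-finite-onA-from-S} via atoms presupposes Noetherianity, which is not among the hypotheses of \thmref{thm:sufficient gauge inv}; finiteness of $\Tr_\tau^p$ follows directly by taking $J=\{p\}$ in \eqref{eqn:2.1}.
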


\begin{proof}
To get hold of $\phi_0$ we exploit the structure of $B_e^{i_X}$ as the norm closure of a nested family of subalgebras $B_J$ similar to \cite[Section 4]{ALN18}, see also \cite[Lemma 3.6]{CLSV11}. More precisely, we recall that a quasi-lattice $I$ is a partially ordered set such that for all $p,q\in I$, either $p,q$ have a unique least common upper bound, denoted $p\vee q$, or have no common upper bound. The example from \cite[Section 4]{ALN18} is that of a monoid in a quasi-lattice ordered pair. The important observation in \cite[Lemma 4.2]{ALN18} is that the order structure alone, regardless of the monoid structure, allows to express the core $C^*$-subalgebra as $\overline{\bigcup_F B_F}$, where $F$ run over the finite $\vee$-closed subsets of $I$ and $B_F=\oplus_{p\in F} B_p$, with $B_p=i_X^{(p)}(\mathcal{K}(X_p))$.

  In our situation, for a right LCM monoid $P$ we define an equivalence relation  by $x\sim y$ if $y =xu$ for some invertible $u\in P^*$ and we denote the equivalence class of $p$ by $[p] = pP^*$. The crux of the matter is that the quasi-lattice graded structure of  the core and its dense subalgebra $\bigcup_F B_F$ exploited in \cite[Lemma 4.2]{ALN18} is inherited by the set $P/_\sim := \{[p]\mid p\in P\}$ of equivalence classes. Indeed, as noticed in \cite[Section 6]{KL1}, the set $P/_\sim$ is a \emph{quasi-lattice} in the sense of \cite[Section 4]{ALN18} for any right LCM monoid $P$. For each finite $\vee$-closed subset $\tilde{F}$ of $P/_\sim$ we obtain a $C^*$-subalgebra
\[
B_{\tilde{F}}=\overline{\operatorname{span}}\{i_X^{(p)}(\mathcal{K}(X_p))\mid [p]\in \tilde{F}\}
\]
of $\NT(X)$, see \cite[Lemma 6.6]{KL1}. The collection of finite $\vee$-closed\footnote{Note that the definition of $\vee$-closed subset $F\subset P/_\sim$ in \cite[Section 6.6]{KL1} ought to be that $[p]\vee [q]\in F$ whenever $[p],[q]\in F$ admit a common upper bound.} subsets $\tilde{F}$ of $P/_\sim$ forms a directed set, which we denote $\mathcal{F}$, and we have that
\[
B_e^{i_X}=\overline{\bigcup_{\tilde{F}\in \mathcal{F}} B_{\tilde{F}}}
\]
by combining observations in the proof of Corollary 6.3 and of Theorem 6.1(b) from \cite{KL1}.

  Thus we may carry out the construction of a positive linear functional $\phi_0$ given by $\oplus_{[p]\in P/_\sim}\phi_{[p]}$ as in \cite[Proposition 4.4 and section 5]{ALN18}.
\end{proof}

\begin{proof}(Proof of \thmref{thm:sufficient gauge inv}). Let $\phi_0$ be the state on $B_e^{i_X}$ constructed in \lemref{lem:gauge invariant} from a tracial state $\tau$ on $A$ satisfying condition \eqref{eqn:2.1}.  We claim that $\phi:=\phi_0\circ E$ satisfies the KMS condition, in which case we obtain surjectivity of the mapping in the statement of the theorem. To show the KMS$_\beta$ condition, we claim that the calculations in the proof of \cite[Theorem 5.1]{ALN18} are valid using the formula for the expectation $E$ on $\NT(X)$ that comes from the Fock space grading, without reference to an ambient group. Thus, with $a=i_X(\xi)$, $\xi\in X_p$ and $b=i_X(\zeta)i_X(\eta)^*$, for $\zeta\in X_q, \eta\in X_r$, $p,q,r\in P$, we must show that $\phi(ab)=N(p)^{-\beta}\phi(ba)$. The case $pq=r$ is verbatim as in \cite{ALN18}, but the case $pq\neq r$ requires some attention as we cannot form elements $pq^{-1}r$ and $qr^{-1}p$ in the absence of a group.

 Assume that $pq\neq r$. Since $ab=i_X(\xi\zeta)i_X(\eta)^*$, it follows from \eqref{eq:def expectation full NT algebra} that $\phi(ab)=0$. Turning to $ba$, we rewrite this following the idea of the proof of \cite[Proposition 2.10]{KL1}, thus we let $\eta=R\eta'$ and $\xi=P\xi'$  for $R\in \mathcal{K}(X_r)$, $P\in \mathcal{K}(X_p)$, $\eta'\in X_r, \xi'\in X_p$. Then
 \[
 ba=i_X(\zeta)i_X(\eta')^* i_X^{(r)}(R^*)i_X^{(p)}(P)i_X(\xi').
 \]
 By Nica covariance of $i_X$ we have that $ba=0$ if $rP\cap pP= \emptyset$. Assume therefore that $rP\cap pP\neq \emptyset$. We have $rP\cap pP=sP$ with $s\in P$, and we let $pp'=rr'=s$ for unique $p',r'$ in $P$. Therefore $ba\in i_X(X_{qr'})i_X(X_{p'})^*$, and we must prove that
 $qr'\neq p'$, in which case we can conclude that $\phi(ba)=0$ by the definition of $E$. If $qr'=p'$, then $pqr'=pp'=rr'$, and right cancellation in $P$ would give $pq=r$, a contradiction.

 Injectivity of the mapping $\phi\mapsto \phi\vert_{A}$ follows because the restriction to $B_e^{i_X}$ of a KMS state is determined uniquely by the requirement \eqref{eq: construct state gauge invariant}.
\end{proof}


\section{Reduction of the positivity condition to atoms}\label{sec:tree}
Throughout this section we assume that $P$ is a Noetherian  left cancellative right LCM monoid, that
$\{ X_p\}_{p \in P}$ is a compactly aligned product system of $C^*$-correspondences
over $P$ with $X_e = A$, that
$N:P \to (0,\infty)$ is a multiplicative homomorphism and that $\beta \in \R$. We also assume that $\tau$ is a tracial state of $A$ satisfying
the positivity condition for finite subsets of atoms:
\begin{equation}\label{eqn:atomicpositivity}
\tau(a) +
\sum_{\emptyset \neq U \subset J}
(-1)^{|U|}N(\lor U)^{-\beta}\Tr_{\tau}^{ \lor U}(a) \geq 0 \qquad \text{ for } a \in A_+\quad J\subset P_a.
\end{equation}
The goal of the section is to show that, under an extra hypothesis on $P$,
the positivity also holds for all finite subsets of $P\bs \{e\}$, as in \eqref{eqn:2.1}. This gives a version of \thmref{thm:sufficient gauge inv} that is stronger because it only requires verification of the  subcollection of inequalities arising from subsets of atoms, which  is finite if $P$ is finitely generated.
We are motivated by the simplification obtained for right-angled Artin monoids in \cite[Theorem 9.1]{ALN18}.

The key property we require for the reduction to atoms is the finiteness of a tree that we construct, largely following  the proof of \cite[Lemma 4.2]{BL18}. In the formal arguments we replace finite subsets $J$ of $P$ by finite \emph{lists} of elements of $P$, in which repetitions are allowed. The reason is that even if the initial list has no repetitions, repetitions may appear already at the second step.

\subsection{Positivity for finite lists in $P \cup\{\infty\}$}
We will use sums over lists of elements in $P$
instead of subsets of $P$., and it is also convenient to allow $\infty$ in our lists.  So for each  $n \in \N$ we consider the index set $I_n\coloneqq \{1, 2, \ldots, n\}$ and define  an $n$-list to be a function of the form $\lambda: I_n \to P \cup \{\infty\}$.

\begin{lem}
Given a tracial state $\tau$ on $A$ that satisfies \eqref{eqn:atomicpositivity}, a list $\lambda$  in $P \cup\{\infty\}$, and $\beta>0$, there is a tracial linear functional $a\mapsto Z(\lambda,\tau,a)$ defined for each $a\in A$ such that
\begin{equation}
Z(\lambda,\tau,a) := \sum_{ U \subset I_n} (-1)^{|U|} N(\lor \lambda(U))^{-\beta} \Tr_{\tau}^{\lor \lambda(U)}(a), \quad a\in A,
\end{equation}
where we follow the convention that if $\lambda(U)$ is not a clique, then $\vee \lambda(U) = \infty$ and $N(\lor \lambda(U))^{-\beta} = 0$.
\end{lem}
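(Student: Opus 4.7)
The plan is to show, in turn, that every summand in the formula defining $Z(\lambda, \tau, a)$ is a well-defined finite number, and that the full sum inherits the trace property from its summands.

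The first step extracts a global finiteness bound from atomic positivity. Applying \eqref{eqn:atomicpositivity} to singleton sets $J = \{s\}$ with $s \in P_a$ yields, for every atom $s$ and every $a \in A_+$, the inequality $\Tr_\tau^s(a) \leq N(s)^\beta \tau(a) < \infty$. This is precisely the hypothesis of \lemref{lem:Tr-finite-onA-from-S}(i), which then delivers
\[
\Tr_\tau^p(a) \leq N(p)^\beta \tau(a) \qquad \text{for all } p \in P, \ a \in A_+,
\]
and asserts that $\Tr_\tau^p|_A$ extends uniquely to a finite tracial positive linear functional on $A$ for each $p \in P$. Hence, whenever $\vee \lambda(U) \in P$, the map $a \mapsto N(\vee \lambda(U))^{-\beta} \Tr_\tau^{\vee \lambda(U)}(a)$ is a well-defined finite tracial positive linear functional on $A$; when $\vee \lambda(U) = \infty$, whether because $\lambda(i) = \infty$ for some $i \in U$ or because $\lambda(U) \subset P$ is not a clique, the summand contributes zero identically by convention.

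The second step is purely formal. Since $I_n$ is finite, the collection of subsets $U \subset I_n$ is finite, so $Z(\lambda, \tau, \cdot)$ is a finite $\R$-linear combination of tracial linear functionals on $A$. As such, it is itself a tracial linear functional on $A$, i.e.\ $Z(\lambda, \tau, ab) = Z(\lambda, \tau, ba)$ for all $a, b \in A$. No serious obstacle is anticipated: the substantive input is \lemref{lem:Tr-finite-onA-from-S}(i), which does the work of promoting the atom-level subinvariance to all of $P$; the remainder is bookkeeping with finite real linear combinations of traces. The only point worth flagging is that the hypothesis $\beta > 0$ plays no role beyond ensuring that the coefficients $N(\vee\lambda(U))^{-\beta}$ are understood as positive reals.
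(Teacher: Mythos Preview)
Your proof is correct and follows essentially the same approach as the paper: both apply \eqref{eqn:atomicpositivity} to singletons $J=\{s\}$ to obtain the hypothesis of \lemref{lem:Tr-finite-onA-from-S}(i), deduce that each $\Tr_\tau^{\vee\lambda(U)}|_A$ is a finite tracial linear functional whenever $\vee\lambda(U)\in P$, handle the $\infty$ case by the vanishing-coefficient convention, and conclude by taking a finite linear combination. The paper additionally cites part~(ii) of that lemma, but part~(i) alone already yields the needed finiteness, so your streamlined citation is entirely adequate.
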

\begin{proof}
When the subset $J$ consists of a single atom, \eqref{eqn:atomicpositivity} is  the assumption of  \lemref{lem:Tr-finite-onA-from-S}(i), and then  \lemref{lem:Tr-finite-onA-from-S}(ii) implies that  $\Tr_{\tau}^{\lor \lambda(U)}$ is a tracial  linear functional defined on all of $A$ whenever $\lor \lambda(U) \in P$.
When $\lor \lambda(U) = \infty$ it is irrelevant that $\Tr_{\tau}^{\lor \lambda(U)}(a)$ is not defined because then the corresponding coefficient vanishes. Hence $Z(\lambda,\tau,a)$ is well-defined and the function $a\mapsto  Z(\lambda,\tau,a)$ is a tracial bounded linear functional on $A$.
\end{proof}
For each list $\lambda$, we let   $\clq_\lambda \coloneqq \{ U \subset I_n: \lambda(U) \in \clq(P)\}$ be the collection of cliques in $\lambda$, explicitly,  this is the collection of subsets of $I_n$ on which $\lambda$ has an upper bound in $P$. Since the terms associated to non-cliques vanish, we have
 \begin{equation*}
Z(\lambda,\tau,a) 
= \sum_{ U \in \clq_\lambda} (-1)^{|U|} N(\lor \lambda(U))^{-\beta} \Tr_{\tau}^{\lor \lambda(U)}(a),
\end{equation*}
where the sums are over over cliques  or over lists, indistinctly.

If $J\subset P$ is a set with $n$ elements, we replace it by a list $\lambda: I_n \to P$ with $\lambda(I_n) = J$ and write $Z(\lambda,\tau,a)$  as a sum over subsets of $P$.
Again, the sum only depends on the part of the range of the list that lies in $P$, and  terms corresponding to non-cliques vanish, that is,
\[
Z(\lambda,\tau,a)  =\sum_{ K \subset \lambda(I)\cap P} (-1)^{|K|} N(\lor K)^{-\beta} \Tr_{\tau}^{\lor K}(a)=\sum_{ K \in \clq(\lambda(I))} (-1)^{|K|} N(\lor K)^{-\beta} \Tr_{\tau}^{\lor K}(a).
\]
We see next that multiples, repetitions, and invertible elements in a list can be ignored as far as $Z(\lambda,\tau,a)$ is concerned.

\begin{lem}
\label{lem:eliminatingmultiples}  Suppose $\lambda :I_n \to P$
is a list and assume $\lambda(i) \leq \lambda(j)$ for some $i$ and $j$ with $i\neq j$. Denote by
$\hat\lambda$ the list obtained by restricting  $\lambda$ to $I_n\setminus \{j\}$. Then $Z(\lambda,\tau,a) = Z(\hat\lambda ,\tau,a)$.
In particular if $\lambda(I_n) \cap P^* \neq \emptyset$, then $Z(\lambda,\tau,a) = 0$ for all $a$.
 \begin{proof}
Suppose $\lambda(i) \leq \lambda(j)$  for $i,j \in I_n$ with $i \neq j$. The $\lambda$-cliques $U\subset I_n$  that contain $j$ fall into two classes, namely those that contain $i$ and those that do not. The operation of `removing $i$' establishes a bijection of the first class onto the second one, reducing the cardinality of the clique by one but leaving the least upper bound unchanged.  This changes the sign of the corresponding summand
of $Z(\lambda,\tau,a)$, but not its absolute value, so  summands paired this way cancel each other out.

If we assume now $\lambda(i)\in P^*$, then $\lambda(i) \leq \lambda(j)$ for every $j\in I_n$, and we may remove successively each $\lambda(j) $ for $j\neq i$ from $\lambda$, finally arriving at $Z(\lambda,\tau,a) = Z(\{i\},\tau, a)$ which is easily seen to be zero because the only two cliques are $\emptyset$ and $\{i\}$, which have opposite signs and the same absolute value $\tau(a)$.
 \end{proof}
 \end{lem}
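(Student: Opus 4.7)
The plan is to exploit a pairing argument based on the fact that when $\lambda(i)\leq\lambda(j)$ the element $\lambda(i)$ is \emph{redundant} in every join that already involves $\lambda(j)$: for any $U\subset I_n$ with $j\in U$, adjoining or removing $i$ leaves both $\lor\lambda(U)$ and the property of being a $\lambda$-clique unchanged. This follows immediately from $\lambda(i)\lor\lambda(j)=\lambda(j)$ together with associativity of $\lor$.

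First I would decompose the sum defining $Z(\lambda,\tau,a)$ according to whether $j\notin U$ or $j\in U$. Subsets $U$ with $j\notin U$ are in canonical bijection with subsets of $I_n\setminus\{j\}$, and the corresponding terms assemble precisely into $Z(\hat\lambda,\tau,a)$. The claim therefore reduces to showing that the partial sum over subsets $U$ with $j\in U$ vanishes.

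On this second class I would apply the fixed-point-free involution $U\mapsto U\triangle\{i\}$, well-defined because $i\neq j$, which pairs each $U\supseteq\{i,j\}$ with $U\setminus\{i\}$. By the redundancy observation the paired subsets have the same $\lor\lambda(U)$ and hence the same coefficient $N(\lor\lambda(U))^{-\beta}\Tr_\tau^{\lor\lambda(U)}(a)$, interpreted uniformly as $0$ when the common join is $\infty$. Since $(-1)^{|U|}$ and $(-1)^{|U\triangle\{i\}|}$ have opposite signs, each pair contributes zero, completing the proof of $Z(\lambda,\tau,a)=Z(\hat\lambda,\tau,a)$.

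For the final assertion, if $\lambda(i)\in P^*$ then $\lambda(i)\leq q$ for every $q\in P$ because $q=\lambda(i)(\lambda(i)^{-1}q)$ with $\lambda(i)^{-1}q\in P$. Iterating the first part to strip away every index different from $i$ reduces $Z(\lambda,\tau,a)$ to $Z$ evaluated on the one-element list at $\lambda(i)$, whose two terms are $\tau(a)$ from $U=\emptyset$ and $-N(\lambda(i))^{-\beta}\Tr_\tau^{\lambda(i)}(a)$ from $U=\{i\}$. By the standing hypothesis $N$ is trivial on $P^*$, so $N(\lambda(i))=1$, and by \lemref{lem:induced traces constant on principal ideals} the induced trace equals $\tau(a)$, giving cancellation. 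The only mild obstacle throughout is the uniform bookkeeping of the $\lor=\infty$ convention, but since the involution preserves cliqueness the pairing never mixes a finite-join term with an infinite-join term.
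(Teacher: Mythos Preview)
Your proof is correct and follows essentially the same pairing argument as the paper: split off the subsets not containing $j$ to recover $Z(\hat\lambda,\tau,a)$, then cancel the remaining terms in pairs via the involution $U\mapsto U\triangle\{i\}$ on subsets containing $j$. Your treatment of the final assertion is in fact slightly more careful than the paper's, since you explicitly invoke $N\vert_{P^*}=1$ and \lemref{lem:induced traces constant on principal ideals} to justify that the $U=\{i\}$ term equals $-\tau(a)$, whereas the paper leaves this implicit.
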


\begin{defn}\label{def:leaf}
A list $\lambda: I_n \to P\cup\{\infty\}$ will be called a \emph{leaf} if  either $\lambda(I_n) \subset P_a  \cup\{\infty\}$ or $\lambda(I_n) \cap P^* \neq \emptyset$.
 \end{defn}

 Next we define an iteration step.  Let $\lambda$ be a list in $P\cup \{\infty\}$. When $\lambda$ is not a leaf, let $i$ be the smallest number in $\{1, 2, \ldots, n\}$ for which $\lambda(i) = pq$ with $p,q\in P\setminus  P^*$ (here we may assume without loss of generality that $p$ is a generator) and define two new lists, $\lambda_1$ and $\lambda_2$, in $P \cup \{\infty\}$ as follows.

 \begin{equation}\label{eqn:recursivestep}
 \lambda_1(j) \coloneqq \begin{cases}\lambda(j) & \text{ if } j\neq i\\
 p & \text{ if } j = i;
 \end{cases}
\qquad \qquad\qquad
\lambda_2(j) \coloneqq \begin{cases}p\inv (p \vee \lambda(j)) & \text{ if } p \vee \lambda(j) <\infty \\
 \infty & \text{ if } p \vee \lambda(j)  =\infty.
 \end{cases}
 \end{equation}

In English: the list  $\lambda_1$ is obtained from $\lambda$ by simply replacing the $i^{th}$ term $pq$  by $p$
and leaving the rest of the terms unchanged,
and the  list $\lambda_2$ is obtained by replacing every term in $\lambda$ by its image
under the transformation $x \mapsto p\inv (x\vee p)$ (here we let $p\vee x$ be any choice of least common upper bound of  $p$ and $x$  if there is one and $\infty$ if there is none).
Notice that, in particular, we may choose
$\lambda_2(i) =p \inv (p\vee pq) =q$.

Clearly  $\lambda_1 (j) =\infty$ if and only if $\lambda(j) =\infty$, and   $\lambda_2(j) = \infty$ if $\lambda(j) =\infty$, but it is certainly possible to have   $\lambda_2(j) =\infty$ with $\lambda (j) < \infty$.

\begin{lem}\label{lem:lambda1lambda2arecliques}
Let $P$ be a right LCM monoid. Let  $\lambda:I_n \to P\cup \{ \infty\}$ be a list that is not a leaf and let $i \in I$ be the first index such that $\lambda(i) = pq$ for  $p\in P_a$ and $q\in P\setminus P^*$.  For each $U \subset I_n$ we have
\begin{enumerate}
\item if $\lambda(U)$ is a clique, then so is $\lambda_1(U)$, and  \[\bigvee \lambda_1(U) \leq \bigvee \lambda(U);\]

\item $\lambda(U) \cup \{p\}$ is a clique if and only if $\lambda_2 (U)$ is a clique, and
\[
\bigvee \lambda(U) \vee p = p \cdot \bigvee \lambda_2(U); \]

\item when $i\in U$ we have that $\lambda_1(U)$ is a clique if and only if $\lambda_2(U\setminus\{i\})$ is a clique, and
\[  \bigvee \lambda_1(U) = p\cdot \bigvee \lambda_2(U\setminus\{i\}).
\]
\end{enumerate}

\end{lem}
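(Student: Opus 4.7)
The plan is to reduce everything to the identity $p \cdot \lambda_2(j) = p \vee \lambda(j)$ (whenever the latter is finite), which is immediate from the definition of $\lambda_2$, together with left cancellation in $P$.

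For part (1), I would observe that $\lambda_1$ differs from $\lambda$ only at the index $i$, where the value drops from $pq$ to $p$, and $p \leq pq$. Consequently every upper bound of $\lambda(U)$ is an upper bound of $\lambda_1(U)$. Thus if $\lambda(U)$ is a clique, then $\lambda_1(U)$ has a common right multiple, and the right LCM hypothesis on $P$ gives that $\bigvee \lambda_1(U)$ exists; moreover $\bigvee \lambda_1(U) \leq \bigvee \lambda(U)$ because the latter is such an upper bound.

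Part (2) is the crux. First I would establish the forward direction: assume $\lambda(U)\cup\{p\}$ is a clique with right LCM $r$. Since $p\leq r$, write $r=ps$ for a (unique, by left cancellation) $s\in P$. For each $j \in U$, since $p\leq r$ and $\lambda(j)\leq r$, the least upper bound $p\vee\lambda(j)$ exists and satisfies $p\vee\lambda(j)\leq r$, so $p\cdot\lambda_2(j) = p\vee\lambda(j)\leq ps$, whence $\lambda_2(j)\leq s$ by left cancellation. Thus $\lambda_2(U)$ admits the upper bound $s$ and is a clique, and $\bigvee\lambda_2(U)\leq s$. For the reverse direction, assume $\lambda_2(U)$ is a clique with right LCM $s'$; then each $p\vee\lambda(j)=p\lambda_2(j)$ is finite, and $ps'$ is an upper bound of every $p\vee\lambda(j)$, hence of $\lambda(U)\cup\{p\}$. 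This proves the equivalence of clique-ness and gives $r\leq ps'$. Combined with $ps'\leq ps = r$ (from $s'\leq s$ established above), left cancellation yields $s=s'$, i.e.\ $\bigvee(\lambda(U)\cup\{p\}) = p\cdot\bigvee\lambda_2(U)$.

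Part (3) is a direct corollary: when $i \in U$, the definition of $\lambda_1$ gives
\[
\lambda_1(U) = \{\lambda(j) : j \in U\setminus\{i\}\} \cup \{p\},
\]
so that $\lambda_1(U)$ and $\lambda(U\setminus\{i\})\cup\{p\}$ describe the same subset of $P$. Applying part (2) to the subset $U\setminus\{i\}$ then yields simultaneously the equivalence of the clique conditions and the identity $\bigvee\lambda_1(U) = p\cdot\bigvee\lambda_2(U\setminus\{i\})$.

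The main obstacle is purely bookkeeping: the identity $p\lambda_2(j) = p\vee\lambda(j)$ is the engine, but one must be careful to invoke left cancellation in the correct direction to extract $s$ from $ps$ and to conclude equality rather than just an inequality between $\bigvee(\lambda(U)\cup\{p\})$ and $p\cdot\bigvee\lambda_2(U)$. No subtle structure of $P$ beyond the right LCM property and left cancellation is needed; in particular the argument does not use any assumption on $p$ being an atom.
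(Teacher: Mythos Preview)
Your proof is correct and follows essentially the same approach as the paper, which phrases parts (2) and (3) via the equivalent language of intersections of principal right ideals $\bigcap_{j\in U}\lambda(j)P$ rather than upper bounds, but relies on exactly the same identity $p\lambda_2(j)=p\vee\lambda(j)$ and left cancellation. Your observation that (3) is an immediate corollary of (2) applied to $U\setminus\{i\}$, via $\lambda_1(U)=\lambda(U\setminus\{i\})\cup\{p\}$, is a slight streamlining over the paper, which redoes the ideal computation separately for (3).
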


\begin{proof}  Suppose $\lambda$ is not a leaf and $\lambda(i) =pq$ as in the statement.
Part $(1)$ can be split into two cases. First assume $i  \notin U$; then obviously $\lambda(U) = \lambda_1(U)$, and the claim follows.
Next assume $i \in U$; then
clearly $\lambda(i) P = pqP \subset pP = \lambda_1(i)P$, and thus $\bigcap _{j\in U} \lambda(j) P \subset \bigcap_{j\in U} \lambda_1(j)P$.  This gives the
inequality in part (1).  Cliques are preserved because $\bigvee \lambda(U) < \infty$ implies
$\bigcap_j \lambda_1(j) P\neq \emptyset$.

For part (2), notice that
\begin{equation}\label{eqn:cliquesintersectioncase2}
p \bigcap_{j\in U} \lambda_2(j)P= \bigcap_{j\in U} p\lambda_2(j)P = \bigcap_{j\in U}   (p\vee \lambda(j)) P =  \bigcap_{j\in U} (pP \cap \lambda(j)P), 
\end{equation}
 Interpreting this for
sets of right LCMs, we see that
\[
p \cdot \bigvee \lambda_2(U) = \bigvee p\lambda_2(U) = p \vee \bigvee \lambda(U) = \bigvee (\lambda(U) \cup \{p\}),
\]
which proves the second assertion.

For part (3), we notice that since $\lambda_1(j) = \lambda(j)$ for $j\in U\setminus\{i\}$, the intersection over $U\bs \{i\}$ can be transformed into one over $U$; specifically,
\begin{equation}\label{eqn:cliquesintersectioncase3}
p \bigcap_{j\in U\setminus\{i\}} \lambda_2(j)P= (\bigcap_{j\in U\setminus\{i\}} p\lambda_2(j)P)  \cap pP= \bigcap_{j\in U\setminus\{i\}} (pP \cap \lambda(j)P) \cap \lambda_1(i) P=\bigcap_{j\in U} \lambda_1(j)P,
\end{equation}
so the sets of right LCMs satisfy
\[
p \cdot \bigvee \lambda_2(U \bs \{i\}) = \bigvee \lambda_1(U).
\]
That cliques are preserved both ways in cases (2) and (3) is easy to see from  \eqref{eqn:cliquesintersectioncase2} and \eqref{eqn:cliquesintersectioncase3}.
\end{proof}

\begin{rem} In some cases the map $\lambda \mapsto \lambda_1$ can send an unbounded list to a clique. For an easy example  consider
 $P=\FF_2^+$, the free monoid on two generators $a$ and $b$. Then the list $\lambda=(ab,a^2)$  is unbounded, but $\lambda_1= (a,a^2)$ is a clique.

 The map $\lambda \mapsto \lambda_1$ does not introduce new invertibles  or new infinites. On the other hand, $\lambda \mapsto \lambda_2$ has the capacity to generate new invertibles and new infinites. 
\end{rem}
\begin{lem}\label{lem:trace}
Let $X$ be an essential $C^*$-correspondence over $A$.
Suppose $\tau_1, \dots, \tau_n$ are positive finite traces on $A$ such that $\Tr_{\tau_i}^X$ is finite for each $i = 1, \dots, n$, and let  $c_1, \dots, c_n \in \R$. If $\sum_{i = 1}^n c_i \tau_i$ is positive on $A$, then so is $\sum_{i=1}^n c_i \Tr_{\tau_i}^X(\cdot)$.
\end{lem}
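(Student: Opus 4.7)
The plan is to prove positivity pointwise at each $a \in A_+$ by writing the induced traces as limits along a common approximate unit and using the hypothesis inside the limit.

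First, I would choose, using that $\mathcal{K}(X)$ has an approximate unit of the form $e_k = \sum_{\xi \in H_k} \theta_{\xi,\xi}$ with $H_k$ a finite subset of $X$, a single net $(H_k)$ with this property that does not depend on $i$. Since each $\Tr_{\tau_i}^X$ is finite by hypothesis, we may invoke the formula recalled just before the lemma, namely
\[
\Tr_{\tau_i}^X(T) \;=\; \lim_k \sum_{\xi \in H_k} \tau_i(\langle \xi, T\xi\rangle), \qquad T \in \LL(X)_+,
\]
where the limit exists in $\mathbb{R}$ (not merely in $[0,\infty]$) precisely because $\Tr_{\tau_i}^X(T) < \infty$.

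Next, fix $a \in A_+$ and apply this formula to $T = \varphi(a) \in \LL(X)_+$, where $\varphi$ denotes the left action. Since we have only finitely many indices $i$, a finite linear combination of convergent nets converges to the corresponding linear combination of limits, so
\[
\sum_{i=1}^n c_i \Tr_{\tau_i}^X(a) \;=\; \lim_k \sum_{\xi \in H_k} \Bigl(\sum_{i=1}^n c_i \tau_i\Bigr)\bigl(\langle \xi, \varphi(a)\xi\rangle\bigr).
\]

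Finally, I would observe that each summand on the right is non-negative: writing $a = b^*b$, one has $\langle \xi, \varphi(a)\xi\rangle = \langle \varphi(b)\xi, \varphi(b)\xi\rangle \in A_+$, and by the hypothesis $\sum_i c_i \tau_i$ is a positive linear functional on $A$, so it takes a non-negative value on each such inner product. Hence every partial sum is $\geq 0$, and the limit is $\geq 0$, yielding $\sum_i c_i \Tr_{\tau_i}^X(a) \geq 0$. By linearity this extends from $A_+$ to the assertion that $\sum_i c_i \Tr_{\tau_i}^X(\cdot)$ is a positive linear functional on $A$.

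There is no real obstacle here; the only subtle point to confirm is that the approximate-unit formula for $\Tr_\tau^X$ genuinely applies to $\varphi(a)$ for every $a\in A_+$ and that the limit can be interchanged with the finite sum over $i$, both of which are secured by the finiteness assumption on each $\Tr_{\tau_i}^X$.
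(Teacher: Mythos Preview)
Your proof is correct and essentially the same as the paper's. The paper invokes ``linearity properties of the induced trace'' to write $\sum_i c_i \Tr_{\tau_i}^X(a) = \Tr_{\sum_i c_i \tau_i}^X(a)$ and then appeals to the fact that inducing preserves positivity; you unpack exactly that linearity via the approximate-unit limit formula and verify positivity of the partial sums directly, which amounts to the same computation made explicit.
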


\begin{proof}
By the linearity properties of the induced trace
we have
\begin{align*}
 \sum_{i = 1}^n c_i \Tr_{\tau_i}^X(a)  =
\Tr_{\sum_{i = 1}^n c_i \tau_i}^X(a) \quad \text{ for all }a\in A_+.
\end{align*}
The result follows because  inducing preserves positivity.
\end{proof}

\begin{prop}\label{prop:recursion lists semilattice}  Suppose
$P$ is a Noetherian  left cancellative right LCM monoid, that
$\{ X_p\}_{p \in P}$ is a compactly aligned product system of $C^*$-correspondences
over $P$ with $X_e = A$, that
$N:P \to (0,\infty)$ is a multiplicative homomorphism and that $\beta \in \R$.
Assume that $\tau$ is a tracial state of $A$ satisfying
the positivity condition \eqref{eqn:atomicpositivity} for finite subsets of atoms.

If $\lambda$,  $\lambda_1$, and $\lambda_2$ are lists, with $\lambda(i) = pq$ as in \eqref{eqn:recursivestep}, then
\begin{equation*}\label{eq:recursion formula lists}
Z(\lambda,\tau,a)=Z(\lambda_1,\tau,a)+
\frac{1}{N(p)^{\beta}} \sum_{U\subset I_n}(-1)^{|U|} N(\vee \lambda_2(U))^{-\beta}\Tr^p_{F_{\vee \lambda_2(U)}(\tau)}(a)
\quad \text{ for all }a\in A_+.
\end{equation*}
\end{prop}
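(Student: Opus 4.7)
The plan is to compute $Z(\lambda,\tau,a)-Z(\lambda_1,\tau,a)$ by splitting each of the sums according to whether the distinguished index $i$ belongs to the summation subset $U$, and then identifying the result with the explicit sum on the right-hand side using induction in stages.

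First I would observe that when $i\notin U$, the definition \eqref{eqn:recursivestep} gives $\lambda(U)=\lambda_1(U)$, so these terms cancel out of the difference $Z(\lambda,\tau,a)-Z(\lambda_1,\tau,a)$ and only the terms with $i\in U$ survive. For $i\in U$, the point $p\leq pq=\lambda(i)$ already lies below $\bigvee\lambda(U)$, so part (2) of \lemref{lem:lambda1lambda2arecliques} yields
\[
\bigvee\lambda(U)\;=\;\bigvee\lambda(U)\vee p\;=\;p\cdot\bigvee\lambda_2(U),
\]
while part (3) gives $\bigvee\lambda_1(U)=p\cdot\bigvee\lambda_2(U\setminus\{i\})$. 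Since $N$ is multiplicative, $N(\bigvee\lambda(U))^{-\beta}=N(p)^{-\beta}N(\bigvee\lambda_2(U))^{-\beta}$ and similarly for $\lambda_1$. When the right LCM is finite, induction in stages (equation \eqref{eq:Tr-decomp}) converts the induced trace $\Tr_\tau^{p\cdot\bigvee\lambda_2(U)}(a)$ into $\Tr^p_{F_{\bigvee\lambda_2(U)}(\tau)}(a)$; when the right LCM is infinite the corresponding coefficient $N(\cdot)^{-\beta}$ is zero by convention, so both sides remain consistent. Finiteness of all intermediate traces needed to apply \eqref{eq:Tr-decomp} follows from the assumption \eqref{eqn:atomicpositivity} together with \lemref{lem:Tr-finite-onA-from-S}.

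Setting $W(U):=N(\bigvee\lambda_2(U))^{-\beta}\Tr^p_{F_{\bigvee\lambda_2(U)}(\tau)}(a)$, the computation above yields
\[
Z(\lambda,\tau,a)-Z(\lambda_1,\tau,a)\;=\;\frac{1}{N(p)^{\beta}}\sum_{U\ni i}(-1)^{|U|}\bigl[W(U)-W(U\setminus\{i\})\bigr].
\]
The last step is then a purely combinatorial re-indexing: parametrise $U\ni i$ as $U=V\cup\{i\}$ with $V\subset I_n\setminus\{i\}$, so that $(-1)^{|U|}=-(-1)^{|V|}$. The $W(U)$ contributions become $-\sum_{V\not\ni i}(-1)^{|V|}W(V\cup\{i\})=\sum_{U\ni i}(-1)^{|U|}W(U)$, and the $W(U\setminus\{i\})$ contributions become $\sum_{V\not\ni i}(-1)^{|V|}W(V)=\sum_{U\not\ni i}(-1)^{|U|}W(U)$. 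Adding these gives $\sum_{U\subset I_n}(-1)^{|U|}W(U)$, which is exactly the asserted right-hand side.

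I do not expect a serious obstacle here; the only delicate points are keeping the infinity convention consistent (so that the formula remains valid in cases where $\bigvee\lambda(U)$, $\bigvee\lambda_1(U)$, or $\bigvee\lambda_2(U)$ becomes infinite) and verifying that induction in stages is legitimately applicable, for which one invokes \lemref{lem:Tr-finite-onA-from-S}(i)--(ii) to ensure the relevant induced traces are finite tracial functionals on $A$.
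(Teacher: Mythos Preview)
Your proposal is correct and follows essentially the same approach as the paper: cancel the $i\notin U$ terms, apply parts (2) and (3) of \lemref{lem:lambda1lambda2arecliques} together with induction in stages \eqref{eq:Tr-decomp} to the surviving terms, and then re-index via $V=U\setminus\{i\}$ to merge the two sums into a single sum over all $U\subset I_n$. Your treatment of the infinity convention and the appeal to \lemref{lem:Tr-finite-onA-from-S} for finiteness of the intermediate traces also match the paper's handling of these points.
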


\begin{proof}
Let $i\in I_n$ be given as in \eqref{eqn:recursivestep}. We compute the difference $Z(\lambda,\tau,a)-Z(\lambda_1,\tau,a)$ for $a\in A_+$.
Notice that  the terms corresponding to each clique $U\subset I_n$ with $i\notin U$ cancel, because then $\lambda(U)=\lambda_1(U)$.  So we only need to write terms with $i\in U \subset I_n$. Therefore
\begin{equation*}
Z(\lambda,\tau,a)-Z(\lambda_1,\tau,a)
=\sum_{i\in U\subset I_n}(-1)^{|U|}\biggl(N(\vee \lambda(U))^{-\beta}\Tr_\tau^{\vee \lambda(U)}(a)- N(\vee \lambda_1(U))^{-\beta}\Tr_\tau^{\vee \lambda_1(U)}(a)\biggr)
\end{equation*}
Since $i\in U$, so that $\lambda(U) \cup \{p\} = \lambda(U)$,  \lemref{lem:lambda1lambda2arecliques}(2) gives $N(\vee \lambda(U)) = N(p) N(\vee \lambda_2(U))$ and
 \lemref{lem:lambda1lambda2arecliques}(3)  gives $  N(\vee \lambda_1(U))=N(p) N(\vee \lambda_2(U\setminus\{i\}))$, so
\begin{equation*}
Z(\lambda,\tau,a)-Z(\lambda_1,\tau,a)
=\sum_{i\in U\subset I_n}\frac{(-1)^{|U|}}{N(p)^{\beta} }\biggl(N(\vee \lambda_2(U))^{-\beta}\Tr_\tau^{\vee \lambda(U)}(a) -
N(\vee \lambda_2(U\setminus\{i\}))^{-\beta}\Tr_\tau^{\vee \lambda_1(U)}(a)\biggr).
\end{equation*}
When we replace  $U\setminus \{i\}$ by $V$, which is a set that ranges over the subsets of $I_n \setminus \{i\}$, and we adjust the sign to take into account that $|V| =|U| -1$, we get
\begin{align}\label{eqn:formulaZ-Z1}
Z(\lambda,&\tau,a)-Z(\lambda_1,\tau,a)
=\frac{1}{N(p)^{\beta}}\biggl( \sum_{i\in U\subset I_n}(-1)^{|U|} N(\vee \lambda_2(U))^{-\beta}\Tr_\tau^{ \bigvee \lambda(U)}(a)\\
&+\sum_{i\notin V\subset I_n}(-1)^{|V|} N(\vee\lambda_2(V))^{-\beta}\Tr_\tau^{\lambda_1(i) \vee\bigvee\lambda_1(V)}(a)\biggr).\notag
\end{align}
Recall now that by \eqref{eq:Tr-decomp} we have $
\Tr_\tau^{ \bigvee \lambda(U)}=\Tr^p_{F_{\vee\lambda_2(U)}(\tau)}$ for each $U$ containing $i$ and
$\Tr_\tau^{\lambda_1(i)\vee \bigvee \lambda(V)}=\Tr^p_{F_{\vee \lambda_2(V)}(\tau)}$ for each $V$ that does not contain $i$. Thus we may collect the two sums from \eqref{eqn:formulaZ-Z1} into the single sum, completing the proof.
\end{proof}

By recursion, the maps $\lambda \mapsto \lambda_1$ and $\lambda \mapsto \lambda_2$ generate a binary tree rooted at $\lambda$. This is  similar to the proof of \cite[Theorem 4.8]{BL18}, but here we use the following tree with nodes  indexed by lists,
\tikzset{every tree node/.style={minimum width=2em,draw},
         blank/.style={draw=none},
         edge from parent/.style=
         {draw,edge from parent path={(\tikzparentnode) -- (\tikzchildnode)}},
         level distance=1.5cm}

\begin{figure}[H]
\centering
\begin{tikzpicture}
\Tree
[.$\lambda$
    [.$\lambda_1$
    	\edge[]; [.$\lambda_{11}$
    		\edge[]; \node[blank]{};
    		\edge[]; \node[blank]{};
    		]
    	\edge[]; [.$\lambda_{12}$
    		\edge[]; \node[blank] {};
    		\edge[]; \node[blank] {};
    		]
    ]
    [.$\lambda_2$
    	\edge[]; [.$\lambda_{21}$
    		\edge[]; \node[blank]{};
    		\edge[]; \node[blank]{};
    		]
    	\edge[]; [.$\lambda_{22}$
    		\edge[]; \node[blank]{};
    		\edge[]; \node[blank]{};
    		]
    ]
]
\end{tikzpicture}
\end{figure}
At level $k$ we have nodes indexed by $\lambda_\omega $, where $\omega$ is a word of length $k$ in the free monoid $ \FF^+\{1,2\}$ generated by the symbols $1$ and $2$.
The level $k+1$ is generated by splitting each node $\lambda_{\omega} $ into two new nodes $\lambda_{\omega 1} :=(\lambda_\omega)_1 $ and
$\lambda_{\omega 2} :=(\lambda_\omega)_2$ defined according to the recursive step \eqref{eqn:recursivestep}. As a matter of notation, whenever $\omega$ is a
finite or infinite word with entries in $\{1, 2\}$, we denote the initial subword of length $k$ by $\omega[1,k]$.

According to \defref{def:leaf}  a node $\lambda_\omega $ is a leaf if  $\lambda_\omega(j) \in P^*$ for some $j$, or if for every $j \in I_n$ there is no notrivial factorisation of $\lambda_\omega(j)$. We are interested in a branching process that stops when $Z(\lambda_\omega,\tau,a) \geq 0$, and this will be ensured either by the positivity assumption for lists consisting of atoms and infinities, or by \lemref{lem:eliminatingmultiples} for lists that have an invertible element, because then $Z(\lambda,\tau,a) = 0$.
Thus we will say that a {\em branch} is a finite or infinite word $\omega$ on the symbols $\{1,2\}$
that starts at the root $\lambda $ and either ends at the first node such that $\lambda_{\omega[1,k]} $ is a leaf, or else does not end at all, should such a node not exist.

\begin{thm} \label{thm:finite type}
Under the assumptions of \proref{prop:recursion lists semilattice}, suppose that the tree $(\lambda_\omega)$ constructed above is finite for every list $\lambda$.
Then reduction of positivity to atoms holds for $P$. Specifically, a tracial state $\tau$ on $A$ satisfies \eqref{eqn:2.1} if and only if it  satisfies the weaker condition \eqref{eqn:atomicpositivity}.
\end{thm}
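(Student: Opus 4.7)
The plan is to prove the non-trivial converse by induction on the height of the (assumed finite) tree rooted at an arbitrary list $\lambda$, establishing the stronger statement that $Z(\lambda,\tau,a)\geq 0$ for every list $\lambda$ and every $a\in A_+$. The forward implication is immediate since finite subsets of $P_a$ are particular finite subsets of $P\setminus\{e\}$.

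For the base case, suppose $\lambda$ is a leaf. If $\lambda(I_n)\cap P^*\neq\emptyset$, then $Z(\lambda,\tau,a)=0$ by \lemref{lem:eliminatingmultiples}. Otherwise $\lambda(I_n)\subset P_a\cup\{\infty\}$; the $\infty$-entries contribute only through $U=\emptyset$ and can be dropped, and repetitions can be collapsed via \lemref{lem:eliminatingmultiples}, leaving a list whose range is a subset $J\subset P_a$ of distinct atoms. Then $Z(\lambda,\tau,a)$ coincides with the left-hand side of \eqref{eqn:atomicpositivity} for $J$, so the atomic positivity hypothesis gives $Z(\lambda,\tau,a)\geq 0$. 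For the inductive step, let $\lambda$ be internal with $\lambda(i)=pq$ as in \eqref{eqn:recursivestep}; both children $\lambda_1$ and $\lambda_2$ have strictly smaller height, so the inductive hypothesis yields $Z(\lambda_j,\tau,\cdot)\geq 0$ on $A_+$ for $j=1,2$. By \proref{prop:recursion lists semilattice},
\begin{equation*}
Z(\lambda,\tau,a) = Z(\lambda_1,\tau,a) + \frac{1}{N(p)^{\beta}} \sum_{U\subset I_n}(-1)^{|U|} N(\vee\lambda_2(U))^{-\beta}\,\Tr^p_{F_{\vee\lambda_2(U)}(\tau)}(a).
\end{equation*}
Set $\mu_2 := \sum_U (-1)^{|U|} N(\vee\lambda_2(U))^{-\beta}\,F_{\vee\lambda_2(U)}(\tau)$, which is a finite real-linear combination of positive finite traces on $A$: each summand is finite by \lemref{lem:Tr-finite-onA-from-S}(i), and terms with $\vee\lambda_2(U)=\infty$ vanish. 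A direct computation shows $\mu_2(b)=Z(\lambda_2,\tau,b)$ for $b\in A$, so $\mu_2\geq 0$ on $A_+$ by the inductive hypothesis. \lemref{lem:trace} applied to the correspondence $X_p$ then gives
\begin{equation*}
\sum_{U\subset I_n}(-1)^{|U|} N(\vee\lambda_2(U))^{-\beta}\,\Tr^p_{F_{\vee\lambda_2(U)}(\tau)}(a) = (F_p\mu_2)(a) \geq 0 \qquad (a\in A_+),
\end{equation*}
and combining with $Z(\lambda_1,\tau,a)\geq 0$ completes the induction. For an arbitrary finite $J\subset P\setminus\{e\}$ enumerated by a list $\lambda$, the quantity $Z(\lambda,\tau,a)$ is precisely the left-hand side of \eqref{eqn:2.1}, yielding the desired inequality.

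The main obstacle I anticipate is the re-casting of the $\lambda_2$-portion of the recursion as the $X_p$-induced trace of the globally positive signed functional $\mu_2$, which is what allows \lemref{lem:trace} to transfer positivity from the level of $\lambda_2$ up to that of $\lambda$. Technically, one must verify that both $F_{\vee\lambda_2(U)}(\tau)$ and $F_p\circ F_{\vee\lambda_2(U)}(\tau) = F_{p(\vee\lambda_2(U))}(\tau)$ are finite positive traces on $A$, so that the hypotheses of \lemref{lem:trace} are satisfied. Both of these follow from \lemref{lem:Tr-finite-onA-from-S}(i), whose hypothesis is precisely the assumed atomic positivity; with this finiteness tracked carefully, the induction closes without circularity.
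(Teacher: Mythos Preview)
Your proof is correct and follows essentially the same approach as the paper: both arguments use the recursion formula from \proref{prop:recursion lists semilattice} together with \lemref{lem:trace} to propagate positivity from $\lambda_1$ and $\lambda_2$ to $\lambda$, with the leaf case handled via \lemref{lem:eliminatingmultiples} and the atomic hypothesis. The only cosmetic difference is that you frame the argument as a direct induction on the height of the finite tree, whereas the paper phrases it contrapositively (if positivity fails at $\lambda$, follow the failure down a branch to a leaf, contradicting positivity there); these are equivalent formulations of the same well-founded recursion.
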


\begin{proof} Suppose that $\tau$ is a tracial state of $A$ that satisfies the positivity condition \eqref{eqn:atomicpositivity} for subsets of atoms. Let $J$ be a subset of $P$ and take a list $\lambda$ with range $J$. The left hand side of \eqref{eqn:2.1} then equals $Z(\lambda, \tau, a)$
so it suffices to show that $Z(\lambda, \tau, a) \geq 0$ for every $a\in A^+$.

 Assume that $\lambda$ is a list in $P$ such that the positivity condition \eqref{eqn:2.1} holds for both $\lambda_1$ and $\lambda_2$ for every $a\in A$.
Notice that
\[
Z(\lambda_2,\tau,\cdot) = \sum_{ U \subset I_n} (-1)^{|U|} N(\lor \lambda_2(U))^{-\beta} \Tr_{\tau}^{\lor \lambda_2(U)}  = \sum_{ U \subset I_n} (-1)^{|U|} N(\lor \lambda_2(U))^{-\beta} (F_{\lor \lambda_2(U)} \tau)
\]
is a linear combination of the finite positive traces $F_{\lor \lambda_2(U)} \tau $. Since $Z(\lambda_2,\tau,\cdot) \geq 0$ by assumption, \lemref{lem:trace} implies that $ \frac{1}{N(p)^{\beta}} \sum_{U\subset I_n}(-1)^{|U|} N(\vee \lambda_2(U))^{-\beta}\Tr^p_{F_{\vee \lambda_2(U)}(\tau)} (a) \geq 0$, and then   \proref{prop:recursion lists semilattice} shows that positivity holds for $\lambda$.

Hence, when positivity fails for a list $\lambda$, then it must also fail for either
$\lambda_1$ or $\lambda_2$, and iteration of this process, say, choosing the first edge whenever  positivity fails for both, generates a branch of the tree of $\lambda$ such that positivity fails at every node. Since the tree of $\lambda$ is finite by assumption, this branch must end at a leaf, on which positivity also fails. But this is a contradiction, because infinities in a list do not affect the sum, invertible elements cause the function $Z$ to vanish, and if the list consists only of atoms, then the value is nonnegative by  assumption \eqref{eqn:atomicpositivity}.
\end{proof}
As an immediate consequence we obtain a stronger version of \thmref{thm:sufficient gauge inv}.
\begin{corollary}\label{cor:simplification}
If in addition to the assumptions of \thmref{thm:sufficient gauge inv} the tree of every list is finite, then $\phi \mapsto \phi|_A$  is a one-to-one correspondence between
$\KMS_\beta$-states on $\Nica \T(X)$ that factor through the conditional expectation $E$ and the tracial states on $A$ satisfying  \eqref{eqn:atomicpositivity}.
\end{corollary}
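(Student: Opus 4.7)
The plan is straightforward: the corollary is a composition of two bijections already in hand. By \thmref{thm:sufficient gauge inv}, the restriction map $\phi \mapsto \phi|_A$ is a one-to-one correspondence between $\KMS_\beta$-states on $\NT(X)$ that factor through the conditional expectation $E$ and tracial states on $A$ satisfying the full positivity condition \eqref{eqn:2.1}. By \thmref{thm:finite type}, whenever the tree of every list is finite, a tracial state $\tau$ on $A$ satisfies \eqref{eqn:2.1} if and only if it satisfies the a priori weaker condition \eqref{eqn:atomicpositivity}. Thus the two bijections combine: under the finite-tree hypothesis, the set of tracial states satisfying \eqref{eqn:2.1} coincides with the set of tracial states satisfying \eqref{eqn:atomicpositivity}, and so $\phi \mapsto \phi|_A$ is a one-to-one correspondence between $\KMS_\beta$-states factoring through $E$ and tracial states satisfying \eqref{eqn:atomicpositivity}.

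What needs checking is that the hypotheses of \thmref{thm:finite type} are in force. \thmref{thm:finite type} is stated under the standing assumptions of \secref{sec:tree}, namely that $P$ is a Noetherian left cancellative right LCM monoid, that $X$ is a compactly aligned product system of $C^*$-correspondences over $P$ with $X_e = A$, and that a multiplicative homomorphism $N$ and $\beta \in \R$ are given. These are all included in the hypotheses of \thmref{thm:sufficient gauge inv} together with the extra finite-tree assumption mentioned in the statement, so no additional verification is needed.

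There is no substantive obstacle: the real content has been packaged into \thmref{thm:sufficient gauge inv} (existence and uniqueness of the gauge-invariant KMS extension given \eqref{eqn:2.1}) and \thmref{thm:finite type} (reduction from \eqref{eqn:2.1} to \eqref{eqn:atomicpositivity}). The corollary simply records the useful consequence that, under the finite-tree hypothesis, one only has to check positivity for finite subsets of atoms --- a finite system of inequalities when the set of atoms is finite --- in order to characterise those KMS$_\beta$-states on $\NT(X)$ that factor through $E$.
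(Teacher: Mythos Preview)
Your proof is correct and matches the paper's approach exactly: the paper states the corollary as an immediate consequence of \thmref{thm:finite type} combined with \thmref{thm:sufficient gauge inv} and gives no further argument. One small bookkeeping correction: Noetherianity of $P$ is not among the hypotheses of \thmref{thm:sufficient gauge inv} but is a standing assumption of \secref{sec:tree}, so it is in force because the corollary sits in that section rather than because it is inherited from \thmref{thm:sufficient gauge inv}.
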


\subsection{The directed case} If the right LCM  monoid $P$ is directed (hence a semi-lattice), in the sense that $pP \cap qP \neq \emptyset$ for every $p,q\in P$, we can actually prove that the tree $(\lambda_\omega)$ is finite.
\begin{prop}\label{pro:finitetree-directedcase}
Suppose $P$ is a Noetherian, right LCM monoid such that $xP \cap yP \neq \emptyset$ for every pair $x,y \in P$,
and let $\lambda:I_n \to P$ be a list of elements of $P$. Then the tree $(\lambda_\omega)$  is finite, and reduction of positivity to atoms holds for $P$.
\begin{proof}
By K\"{o}nig's lemma,
the tree is infinite if and only if it has an infinite branch.
By definition, an infinite branch consists of an infinite sequence $\lambda_\omega  = (\lambda_{\omega[1,k]} )_{k\in\N}$ of nodes associated to
 an infinite word $\omega$ over $\{1,2\}$ such that for every level $k$ the set $\lambda_{\omega[1,k]}(I_n)$ contains no invertible elements and has at least one element
with a nontrivial factorisation.  We will show that  there are no such infinite branches.

We claim first that a branch $\omega$ can only have finitely many $2$'s.
Aiming for a  contradiction, we assume $\omega$ has infinitely many $2$'s, occurring precisely as
$\omega_{k_j}$ for an infinite increasing sequence $({k_j})_{j\in\N}$, while the rest of the $\omega_i$ are equal to $1$.
Thus,  $\omega[1,k_1-1]$ is a string consisting of $k_1-1\geq 0$ symbols equal to $1$,  followed by $\omega_{k_1} =2$ (the first $2$ in $\omega$) followed by the string $\omega[k_1+1,k_2-1]$ of $k_2-k_1 -1 \geq 0$ symbols equal to $1$, followed
 $\omega_{k_2}$ (the second $2$ in $\omega$), and so on. Since $\omega(j) =1$ for
 $j \leq k_1 -1$. \lemref{lem:lambda1lambda2arecliques}(1) applied $k_1-1$ times gives
\[\vee \lambda =\vee \lambda_{\omega[1]}g'_1=(\vee \lambda_{\omega[1,2]}g'_2)g'_1=\dots = (\vee \lambda_{\omega[1,k_1-1]}g'_{k_1-1})g'_{k_1-2}\dots g'_2g'_1,
\]
and we set $g_{k_1}=g'_{k_1-1}\dots g'_2g'_1$, where $g_{k_1} =e$ in case $k_1 =1$.
Since $\omega_{k_1} =2$, \lemref{lem:lambda1lambda2arecliques}(2) applied to the list $\lambda_{\omega[1,k_1-1]}$
with $U =I_n$  gives
\[
\vee \lambda_{\omega[1,k_1-1]}=p_{k_1} (\vee (\lambda_{\omega[1,k_1-1]})_2) = p_{k_1} (\vee \lambda_{\omega[1,k_1]})
\]
for some generator $p_{k_1}\in P_a$. Hence,  writing $d_k\coloneqq \vee \lambda_{\omega[1,k]}$ for $k = 1, 2, \ldots$, 
we have shown that
\begin{equation}\label{eq:sup first level 2}
 d_0= \vee \lambda = d_{k_1-1}g_{k_1}=p_{k_1}d_{k_1} g_{k_1}.
\end{equation}

Now, let $k_2> k_1 $ be the second occurrence of a $2$ in $\omega$.
If $k_2 = k_1 +1$, set $g_{k_2} = e$.
Otherwise,  by \lemref{lem:lambda1lambda2arecliques}(1) applied $k_2-k_1-1$ times  and
by \lemref{lem:lambda1lambda2arecliques} applied at the $k_2$ step
 we find  $g'_j\in P$ for $j=k_1+ 1,\dots, k_2-1$  and a generator $p_{k_2}\in P_a $  such that
\[
\vee \lambda_{\omega[1,k_2-1]}=p_{k_2}(\vee \lambda_{\omega[1,k_2]})
\]
and
\[
\vee \lambda_{\omega[1,k_1]}=\vee \lambda_{\omega[1,k_2-1]}g'_{k_2-1}\dots g'_{k_1+1}.
\]
Putting $g_{k_2}=g'_{k_2-1}\dots g'_{k_1+1}$,   
gives
\[
d_{k_1}=d_{k_2-1}g_{k_2}= p_{k_2}d_{k_2}g_{k_2}.
\]
Inserting this in  \eqref{eq:sup first level 2} we get
\begin{equation*}d_0=p_{k_1} (p_{k_2} d_{k_2}g_{k_2}) g_{k_1}.
\end{equation*}
Continuing this way we get two sequences $\{p_{k_m}\}$ in $P\setminus P^*$ and $\{g_{k_m}\}$ in $P$ such that
\[
d_0=p_{k_1}\dots p_{k_m}d_{k_m+1}g_{k_m}\dots g_{k_1}.
\]
Thus $(p_{k_1}, \, p_{k_1}p_{k_2}, \, p_{k_1} p_{k_2}p_{k_3}, \, \ldots )$  is a strictly increasing sequence of left divisors of $d_0$, which contradicts the assumption that $P$ is Noetherian and completes the proof of the claim.

It follows that there exists $\bar k$  such that $\omega[k]=1$ for all $k\geq \bar k$.  We will show that this leads to a contradiction.
Notice that, by definition, the process defining $\lambda_1$ will exhaust the factorisation of  $\lambda_{\omega_{[1,\bar k]}}(1)$ before moving onto that of
$\lambda_{\omega_{[1,\bar k]}}(2)$, and so on. Continuing, we see that the $j$-th term $\lambda_{\omega_{[1,\bar k]}}(j)$ of the list remains constant down the branch until it is changed for the first time after the initial $j-1$ terms have been replaced by generators,  admitting no further nontrivial factorisations.
Each step in the $\lambda_1$-process requires a nontrivial factorisation of exactly one of the terms in the list at that level, that is, at the  $(\bar k+y)$-th step, there exist exactly one $j\in I_n$,  $p_y \in P_a$ and $q_y \in P\setminus P^*$ such that $\lambda_{\omega_{[1,\bar k+y ]}} (j) = p_y q_y$ and
$\lambda_{\omega_{[1,\bar k+y +1]} } = p_y$. The other terms in the list remain unchanged.
Since the list is finite, the $\lambda_1$-process must stop after finitely many steps. This contradicts the assumption that $\omega$ is an infinite branch, proving that the tree has to be  finite. The last assertion now follows by \thmref{thm:finite type}
\end{proof}
\end{prop}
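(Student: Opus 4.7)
The plan is to use K\"onig's lemma: the tree has branching factor at most two, so it is finite if and only if it has no infinite branch. I would argue by contradiction, assuming there is an infinite word $\omega$ over $\{1,2\}$ such that every node $\lambda_{\omega[1,k]}$ along the corresponding branch fails to be a leaf, i.e.\ has no invertible entry and at least one entry admitting a nontrivial factorisation. The directedness hypothesis $xP\cap yP\neq\emptyset$ ensures that every clique has a finite LCM, so $\vee\lambda_{\omega[1,k]}\in P$ at every level, which removes the nuisance of tracking $\infty$-entries.

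The heart of the argument is a bookkeeping identity tracking how $\vee\lambda_{\omega[1,k]}$ evolves. At a $1$-step (when $\omega(k+1)=1$ and $\lambda_{\omega[1,k]}(i)=pq$ as in \eqref{eqn:recursivestep}), applying \lemref{lem:lambda1lambda2arecliques}(1) with $U=I_n$ gives $\vee\lambda_{\omega[1,k]}=(\vee\lambda_{\omega[1,k+1]})\,g_{k+1}$ for some $g_{k+1}\in P$; that is, $\vee\lambda_{\omega[1,k+1]}$ is a left-divisor of $\vee\lambda_{\omega[1,k]}$. At a $2$-step (when $\omega(k+1)=2$), part~(2) of the same lemma, together with the fact that $p$ already divides $\vee\lambda_{\omega[1,k]}$, yields $\vee\lambda_{\omega[1,k]}=p\cdot(\vee\lambda_{\omega[1,k+1]})$ with $p$ a non-invertible atom.

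\textbf{Case A: $\omega$ contains infinitely many $2$'s} at positions $k_1<k_2<\cdots$. I would iterate the two identities above, peeling off an atom $p_{k_j}$ on the left at each $2$-step while absorbing the $1$-step factors $g$ on the right. Telescoping yields, for every $m$, a factorisation
\[
\vee\lambda \;=\; p_{k_1}p_{k_2}\cdots p_{k_m}\cdot(\vee\lambda_{\omega[1,k_m]})\cdot h_m
\]
for some $h_m\in P$ and non-invertible atoms $p_{k_1},\ldots,p_{k_m}$. Then $(p_{k_1},\,p_{k_1}p_{k_2},\,p_{k_1}p_{k_2}p_{k_3},\ldots)$ is a strictly increasing sequence in $\operatorname{Div}_l(\vee\lambda)$ with successive quotients non-invertible, contradicting right-Noetherianity of $P$ (cf.\ \cite[Proposition~II.2.28]{Garside-book}).

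\textbf{Case B: $\omega$ contains only finitely many $2$'s}, so there exists $\bar k$ with $\omega(k)=1$ for all $k>\bar k$. A $1$-step acts on the entry at the smallest index $i$ admitting a nontrivial factorisation and replaces it with an atom, leaving all other entries fixed; entries of smaller index were already non-factorisable. Hence the smallest factorisable index strictly increases at each $1$-step, so after at most $n$ further steps every entry lies in $P_a$ (no invertible can have appeared along an infinite branch, and $1$-steps introduce only atoms). That node is a leaf, contradicting infiniteness. Combining the two cases proves the tree is finite, and \thmref{thm:finite type} then delivers the reduction of positivity to atoms. The most delicate point I expect is the telescoping of Case A, namely, verifying that the sequence of left-divisors really is strictly increasing despite the arbitrarily long strings of $1$-steps that may separate consecutive $2$-steps.
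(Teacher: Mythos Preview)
Your proposal is correct and follows essentially the same approach as the paper's proof: K\"onig's lemma reduces to ruling out infinite branches, Case~A (infinitely many $2$'s) is handled by telescoping the identities from \lemref{lem:lambda1lambda2arecliques}(1) and (2) to produce a strictly increasing chain of left-divisors of $\vee\lambda$ contradicting Noetherianity, and Case~B (eventually only $1$'s) terminates because each $1$-step strictly increases the smallest factorisable index. Your phrasing of Case~B is in fact a bit crisper than the paper's, but the content is identical.
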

The question of which monoids have reduction of positivity to subsets of atoms was raised  in the introduction of \cite{ALN18}.
As a corollary we can add Artin monoids of finite type to that class.
\begin{corollary}\label{cor:directedcase}
Reduction of positivity to generators holds for Artin monoids of finite type.
\end{corollary}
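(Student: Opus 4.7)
The plan is to check that every finite-type Artin monoid satisfies the three hypotheses of \proref{pro:finitetree-directedcase}: Noetherianity, the right LCM property, and directedness (every pair of elements admits a common right multiple). The first two are already recorded in the paper: Noetherianity is \remref{rem:length} (via the length function coming from the homogeneity of the Artin relations), and the right LCM property together with the weak quasi-lattice order is \lemref{bound}. What remains, and is the only substantive point, is directedness.

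The main step I would isolate as a separate paragraph is the verification that a finite-type Artin monoid $A_M^+$ is directed. Finite-type here means that the associated Coxeter group is finite; classically (Brieskorn--Saito, and see \cite{Garside-book}) this is exactly the setting in which $A_M^+$ is a Garside monoid, with Garside element $\Delta$ obtained by lifting the longest element of the Coxeter group. The element $\Delta$ has the property that every generator $s\in S$ is a left divisor of $\Delta$; consequently, for any $p\in A_M^+$, writing $p$ as a word of length $k$ in $S$ and iterating the divisibility property gives $p\le \Delta^k$. Therefore, given $p,q\in A_M^+$ of lengths $k$ and $\ell$, the element $\Delta^{\max(k,\ell)}$ is a common right multiple, so $p\vee q$ exists in $A_M^+$ and the monoid is directed.

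Having verified the three hypotheses, \proref{pro:finitetree-directedcase} applies and yields reduction of the positivity condition \eqref{eqn:2.1} to the atomic inequalities \eqref{eqn:atomicpositivity}. To finish, observe that for any Artin monoid the atoms coincide with the canonical generating set $S$: the length function of \remref{rem:length} takes the value $1$ on elements of $S$ and is additive on products of non-invertible elements, so no $s\in S$ admits a non-trivial factorisation. Since $(A_M)^+$ has trivial group of units, reduction to atoms is exactly reduction to generators in the sense of \eqref{eq:ineq subset of S}, which is what the corollary asserts.

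I do not expect any serious obstacle; the whole argument is a bookkeeping application of \proref{pro:finitetree-directedcase}, with the only non-formal input being the existence of a Garside element in the finite-type case, which is a well-established classical fact.
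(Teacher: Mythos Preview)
Your proposal is correct and takes essentially the same approach as the paper: the paper's proof is the single sentence ``Artin monoids of finite type are Noetherian, directed, right LCM monoids,'' invoking \proref{pro:finitetree-directedcase}, and you have simply fleshed out why each of those three adjectives holds (including the standard Garside-element argument for directedness) and why atoms coincide with the canonical generators.
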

\begin{proof}
The result follows from \proref{pro:finitetree-directedcase} because Artin monoids of finite type are Noetherian, directed, right LCM monoids.
\end{proof}

\subsection{Right-angled Artin monoids} Our methods also recover the fact  that reduction of positivity holds for right-angled Artin monoids, cf. \cite[Theorem 9.1]{ALN18}.

\begin{prop}\label{pro:finitetree-RAAMcase}
Suppose $P$ is a right-angled Artin monoid,
and let $\lambda:I_n \to P$ be a list of elements of $P$. Then the tree $(\lambda_\omega)$  is finite, and reduction of positivity to generators holds for $P$.
\end{prop}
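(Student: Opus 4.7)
The plan is to construct an integer-valued length-based potential function on lists that strictly decreases under both branches of the recursion \eqref{eqn:recursivestep}, so that every branch of the binary tree terminates in finitely many steps and the tree itself is finite; then \thmref{thm:finite type} immediately yields the reduction of positivity to generators. By \remref{rem:length}, the Artin monoid $P = A_M^+$ carries a length function $\ell : P \to \Z_{\geq 0}$. Extending it by $\ell(\infty) := 0$, I set $L(\lambda) := \sum_{j} \ell(\lambda(j))$ for every list $\lambda : I_n \to P \cup \{\infty\}$. Since $L(\lambda) \in \Z_{\geq 0}$, it suffices to establish $L(\lambda_1) < L(\lambda)$ and $L(\lambda_2) < L(\lambda)$ whenever $\lambda$ is not a leaf; the binary tree of $\lambda$ then has depth at most $L(\lambda)$, and is therefore finite.

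The decrease under $\lambda_1$ is automatic: the $i$-th entry changes from $pq$ (of length $\geq 2$, since $P^* = \{e\}$ forces $q \neq e$) to the generator $p$ while other entries are preserved, so $L(\lambda_1) = L(\lambda) - \ell(q) \leq L(\lambda) - 1$. The decrease under $\lambda_2$ reduces to the following \emph{key lemma}: for any generator $p \in S$ and $x \in P$ with $p \vee x < \infty$, one has $\ell(p^{-1}(p \vee x)) \leq \ell(x)$. Granting it, every finite entry $\lambda_2(j) = p^{-1}(p \vee \lambda(j))$ satisfies $\ell(\lambda_2(j)) \leq \ell(\lambda(j))$, entries with $\lambda_2(j) = \infty$ contribute $0 \leq \ell(\lambda(j))$, and the $i$-th entry strictly decreases from $\ell(pq)$ to $\ell(q) = \ell(pq) - 1$, whence $L(\lambda_2) \leq L(\lambda) - 1$.

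I would prove the key lemma by induction on $\ell(x)$, the case $x = e$ being trivial. For $\ell(x) \geq 1$, pick a generator $s$ with $s \leq x$ and write $x = sx'$ with $\ell(x') = \ell(x) - 1$. If $s = p$, then $p \vee x = x$ and $p^{-1}(p \vee x) = x'$, so $\ell(p^{-1}(p \vee x)) = \ell(x) - 1 \leq \ell(x)$. If $s \neq p$ and $s, p$ do not commute in $A_M$ (i.e.\ $m_{sp} = \infty$), then $s \vee p = \infty$ by \lemref{bound}, and $s \leq x$ would force $p \vee x = \infty$, contradicting the hypothesis; this case is vacuous. The main case is $s \neq p$ with $sp = ps$ in $P$, where I aim to establish the identity $p \vee x = s(p \vee x')$. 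Both $s$ and $p$ left-divide $p \vee x$, and since they commute with $sp < \infty$, one has $sp \leq p \vee x$; writing $p \vee x = spu$ for a unique $u \in P$ and left-cancelling $s$ in $sx' = x \leq spu$ gives $x' \leq pu$, whence $p \vee x' \leq pu$ and $s(p \vee x') \leq spu = p \vee x$. The reverse inequality is routine: $s(p \vee x') \geq p$ via $sp = ps$, and $s(p \vee x') \geq sx' = x$ since $x' \leq p \vee x'$. Using $sp = ps$, I then rewrite $p \vee x = s(p \vee x') = sp \cdot p^{-1}(p \vee x') = p \cdot s \cdot p^{-1}(p \vee x')$ and left-cancel $p$ to obtain $p^{-1}(p \vee x) = s \cdot p^{-1}(p \vee x')$; the inductive hypothesis applied to $x'$ yields $\ell(p^{-1}(p \vee x)) = 1 + \ell(p^{-1}(p \vee x')) \leq 1 + \ell(x') = \ell(x)$.

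The main obstacle is the identity $p \vee x = s(p \vee x')$ in the commuting case, which hinges on the right-angled hypothesis via the fact that $sP \cap pP = spP$ whenever $sp = ps$ in $P$; once this is secured, strict decrease of the nonnegative integer $L$ forces every branch of the tree to terminate in at most $L(\lambda)$ steps, so the binary tree is finite, and \thmref{thm:finite type} delivers reduction of positivity to generators for every right-angled Artin monoid.
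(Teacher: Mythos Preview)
Your proof is correct and follows essentially the same approach as the paper: both arguments use the total length $L(\lambda) = \sum_j \ell(\lambda(j))$ (with the convention $\ell(\infty)=0$) as a strictly decreasing potential along every branch, and then invoke \thmref{thm:finite type}. The only notable difference is in how the key inequality $\ell(p^{-1}(p\vee x))\le \ell(x)$ for a generator $p$ is obtained: the paper quotes the structural fact $s\vee p\in\{p,sp,s,\infty\}$ from \cite[Lemma~9.2]{ALN18}, whereas you give a self-contained inductive proof via the identity $p\vee(sx')=s(p\vee x')$ when $sp=ps$; your route avoids the external citation at the cost of a short extra computation.
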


\begin{proof}
Let $\omega \in \mathbb{F}^+\{1,2\}$, and suppose
that $\lambda_{\omega[1,k]}(I_n)$ is not a leaf for some $k \in \N$. Recall from the definition of $\lambda_1$ and $\lambda_2$ in \eqref{eqn:recursivestep}  that when we factor the element $\lambda(i) =sq$ in $\lambda_{\omega}(I_n)$, we choose $s$ to be a generator, that is $s \in P_a$.

Let
$\ell: P \to \N$ be the normalized length function on $P$ (see \Cref{rem:length}). We  claim that $\lambda_{\omega[1,k+1]}(j)$ either satisfies
 $\lambda_{\omega[1,k+1]}(j) = \infty$ or else $\ell(\lambda_{\omega[1,k+1]}(j)) \leq \ell(\lambda_{\omega[1,k]}(j))$ for all $j = 1, \dots, n$, and that the inequality is strict
 at least for the subindex $i$ where the factorisation occurs.
Indeed, the length reduction clearly holds in $\lambda_1$-step, that is, when $\omega_{k+1} =1$, because the length of
the element $\lambda_{\omega[1,k]}(i)$ being factorised goes down to $1$, while the lengths of all the other elements remain the same.
Next we show that the length reduction also holds in the case of a
$\lambda_2$-step, that is, when $\omega_{k+1} = 2$. From  \cite[Lemma 9.2]{ALN18}, we know that $s \vee p \in \{ p, s p, s, \infty\}$ for all $p \in P$ and
$s \in P_a$. Thus $s^{-1}(s \vee p) \in \{s^{-1}p,p,e,\infty \}$, where the case $s^{-1}p$ only occurs if $s$ is an initial letter in some expression of $p$, in which case we have that $s^{-1}p \in P$ with $\ell(s^{-1}p) < \ell(p)$.  From this it becomes clear that either
$\lambda_{\omega[1,k+1]}(j) = \infty$ or
\begin{align*}
\lambda_{\omega[1,k+1]}(i) = \ell(s^{-1}(p \vee s)) \leq \ell(p) = \ell(\lambda_{\omega[1,k]}(i)),
\end{align*}
for all $i = 1, \dots, n$. In this case the term $\lambda_{\omega[1,k]}(i)$ being factorised goes down in length by $1$.
This finishes the proof of our claim.

If we now account for the total length $\sum_j \ell(\lambda(j))$ using the convention that $\ell(e) = \ell(\infty) =0$, we see that
$k \mapsto \sum_j \ell(\lambda_{\omega[1,k]}(j))$ is a strictly decreasing  sequence with values in the positive integers. This excludes the possibility of an infinite branch, and thus the tree
$(\lambda_{\omega})$ is finite by  K\"{o}nig's lemma.
\end{proof}

\section{Reduction to minimal elements}\label{sec:reduction minimal set all qlo}

In this section we show that for all weak quasi-lattice ordered groups $(G, P)$ with $P$ Noetherian there is
a reduction of the positivity criterion  \eqref{eqn:2.1} to a system of inequalities associated to subsets of a set $\pmin$, see the definition below, that contains the atoms but may have other elements, although in some special cases we can verify that
$\pmin = P_a$. The  definition of the set $\pmin$ is motivated by \cite[Definition 4.9]{BL18}.
\begin{defn}\label{defn:APmin}
Suppose  $(G,P)$ is a weak quasi-lattice ordered group with set of atoms $P_a$. Set $P_1 := P_a$ and, recursively, $P_n := P_a^{-1}P_{n-1} \cap P = \{x\inv p \mid x\in P_a, p \in P_n \cap xP\} $
 for each $n \geq 2$; then define
the minimal set \begin{align*}
\pmin := \bigcup_{n \in \N} P_n.
\end{align*}
\end{defn}

\begin{lem}
The minimal set $\pmin$ is the smallest subset of $P$ that contains $P_a$ and is closed under the operations $p\mapsto x^{-1}(x \lor p)$ for  $x \in P_a$.
\end{lem}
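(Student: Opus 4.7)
The plan is to prove the equality by a double inclusion. Let $M$ denote the smallest subset of $P$ containing $P_a$ that is closed under the operations $p \mapsto x^{-1}(x \lor p)$ for every $x \in P_a$ (applied whenever $x \lor p$ lies in $P$); I want to show $\pmin = M$.

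For the inclusion $M \subseteq \pmin$, I would verify that $\pmin$ is itself one of the candidates among which $M$ is the infimum. The containment $P_a = P_1 \subseteq \pmin$ is built into the definition. To establish closure, take any $p \in \pmin$ and fix $n$ with $p \in P_n$; for each atom $x$ with $x \lor p < \infty$, the element $x^{-1}(x \lor p)$ belongs to $P_{n+1}$ directly from the recursive clause defining $P_{n+1}$, and therefore to $\pmin$. Since $M$ is the smallest such set, this forces $M \subseteq \pmin$.

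For the reverse inclusion $\pmin \subseteq M$, I would show $P_n \subseteq M$ for every $n \geq 1$ by induction on $n$. The base case $P_1 = P_a \subseteq M$ is the standing hypothesis on $M$. For the inductive step, suppose $P_{n-1} \subseteq M$ and let $q \in P_n$; by the recursive definition there exist $x \in P_a$ and $p \in P_{n-1}$ with $x \lor p < \infty$ such that $q = x^{-1}(x \lor p)$. Since $p \in M$ by the inductive hypothesis and $M$ is closed under the operations, $q \in M$. Taking unions over $n$ yields $\pmin \subseteq M$.

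I expect no substantial obstacle: the argument is essentially a bookkeeping exercise, where the main content is to identify each recursive step $P_{n-1} \rightsquigarrow P_n$ with exactly one application of the closure operation $p \mapsto x^{-1}(x \lor p)$ at the previous stage. Once this identification is in place, both inclusions follow routinely.
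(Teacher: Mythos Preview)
Your proposal is correct and follows essentially the same approach as the paper's proof; the paper simply compresses your double-inclusion argument into two one-line observations (that $\pmin$ contains $P_a$ and is closed under the given operations, and that any $Q$ with those two properties must contain each $P_n$, hence $\pmin$).
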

\begin{proof}   Clearly the set $\pmin$  contains $P_a$ and is closed under $p\mapsto x^{-1}(x \lor p)$, and if $Q\subset P$ contains $P_a$ and is closed under $p\mapsto x^{-1}(x \lor p)$, then it obviously contains each of the $P_n$, and hence $\pmin$.
\end{proof}

Recall that $\Omega_K$ denotes the intersection $\bigcap_{k\in K} (P\bs kP)$ for each finite subset $K \subset P$; and set $\Omega_\emptyset = P$. Given a nonempty subset $R$ of $P$, we define
$\A_R$ to be the collection  of all \emph{finite (or empty) disjoint unions} of sets $p\Omega_K $:
\begin{equation}\label{eq:sets in A0}
\A_R := \{ \bigsqcup_{j=1}^n p_j\Omega_{K_j} : p_j\in P \text{ and } K_j \subset R \text{ finite or empty}\}.
\end{equation}
By convention $\emptyset \in \A_R$, arising from the empty union.
 It is apparent that if $R\subset T\subset P$, then $\A_{R}\subset \A_T$.
Our strategy for proving reductions of \eqref{eqn:2.1} to a smaller set of inequalities based on $\pmin$ is to show that when $P$ is Noetherian, $\A_{\pmin}$ is itself an algebra of subsets of $P$, namely the algebra $\Bc_P$ from \lemref{lem:algebra of sets general}.

\begin{lem} \label{lem:comp}
The collection
$\A_{P_a} = \{ \sqcup_{j=1}^n p_j\Omega_{K_j} : p_j\in P \text{ and } K_j \subset P_a \text{ finite or empty}\}$ contains  the set $pP$ and its complement $P \setminus pP$
for every $p\in P$.
\end{lem}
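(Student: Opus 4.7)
The plan is to dispatch $pP$ immediately and then handle the complement $P\setminus pP$ by induction on the atom-length of $p$.

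For the first part, the convention $\Omega_\emptyset = P$ gives $pP = p\Omega_\emptyset$, which is already in $\A_{P_a}$ as a one-term disjoint union with $K_1 = \emptyset \subset P_a$. For the complement, the base cases are easy: if $p \in P^*$ then $pP = P$, so $P\setminus pP = \emptyset$ belongs to $\A_{P_a}$ as the empty disjoint union; if $p$ is itself an atom, then $P\setminus pP = \Omega_{\{p\}}$ with $\{p\} \subset P_a$, which is in $\A_{P_a}$ with $p_1 = e$ and $K_1 = \{p\}$.

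For the general case, I would invoke the fact that Noetherianity gives a factorization $p = s_1 s_2 \cdots s_n$ with each $s_i \in P_a$ (absorbing any invertibles into neighbouring atoms), and perform induction on $n$. The inductive step relies on the decomposition
\begin{equation*}
P \setminus pP \;=\; (P \setminus s_1 P) \;\sqcup\; (s_1 P \setminus pP) \;=\; \Omega_{\{s_1\}} \;\sqcup\; s_1\bigl(P \setminus s_2 \cdots s_n P\bigr),
\end{equation*}
where the second equality uses left cancellativity in $P$ to pull $s_1$ outside. The induction hypothesis applied to $s_2 \cdots s_n$ writes $P \setminus s_2 \cdots s_n P = \bigsqcup_j q_j \Omega_{K_j}$ with each $K_j \subset P_a$ finite, and left-multiplying by $s_1$ preserves the disjoint-union form (again by left cancellativity), yielding $s_1(P \setminus s_2 \cdots s_n P) = \bigsqcup_j s_1 q_j \Omega_{K_j}$. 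Putting the pieces together expresses $P \setminus pP$ as a finite disjoint union of sets of the form $p'\Omega_K$ with $K \subset P_a$, as required.

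The only thing to double-check is disjointness of the union $\Omega_{\{s_1\}} \sqcup \bigsqcup_j s_1 q_j \Omega_{K_j}$: the second block sits inside $s_1 P$ and the first is by definition the complement of $s_1 P$, so they are disjoint, while the disjointness among the $s_1 q_j \Omega_{K_j}$ is inherited from the inductive decomposition via left cancellativity. I expect the main subtle point to be the bookkeeping for the factorization of $p$ into atoms, since $P$ may have nontrivial invertibles; but Noetherianity ensures a finite such factorization exists and provides a well-founded induction parameter, so no genuine obstacle arises.
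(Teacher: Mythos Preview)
Your proof is correct and follows essentially the same approach as the paper: the paper writes down directly the telescoping decomposition
\[
P\setminus pP = \Omega_{\{s_{1}\}} \sqcup s_{1}\Omega_{\{s_{2}\}} \sqcup \cdots \sqcup s_{1}\cdots s_{n-1}\Omega_{\{s_{n}\}},
\]
while your induction on $n$ simply unrolls to the same thing. Your remark about invertibles is unnecessary here since in this section $(G,P)$ is weak quasi-lattice ordered, so $P^*=\{e\}$, but it does no harm.
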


\begin{proof} Let $p\in P$.
Setting $K = \emptyset$ in the definition shows that  $pP =p\Omega_\emptyset \in \A_{P_a}$.
 Suppose now that $p = s_{i_1} \dots s_{i_k}$ is an expression for $p$ in terms of atoms.

Then we have
\begin{align*}
P \setminus pP &= (P \bs s_{i_1}P) \: \ml{\sqcup} \: (s_{i_1}P \bs s_{i_1}s_{i_2}P)
\: \ml{\sqcup} \:
 \dots \: \ml{\sqcup} \: (s_{i_1} \: \dots s_{i_{k-1}}P \bs s_{i_1} \dots s_{i_k} P)\\
&= ( P \bs s_{i_1}P) \: \ml{\sqcup} \: s_{i_1}(P \bs s_{i_2}P) \: \ml{\sqcup} \: \dots
\: \ml{\sqcup} \: s_{i_1} \dots s_{i_{k-1}} (P \bs s_{i_k} P),
\end{align*}
which is in $\A_{P_a}$ by  definition.
\end{proof}

\begin{lem}\label{lem:int} Let $(G,P)$ be a weak quasi-lattice ordered group with $P$ Noetherian. Then
$\A_{\pmin}$ is the algebra  $ \Bc_P$.
\end{lem}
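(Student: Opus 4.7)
The inclusion $\A_{\pmin}\subset \Bc_P$ is immediate since each generator $p\Omega_K = pP\setminus\bigcup_{k\in K}pkP$ already lies in $\Bc_P$, and $\Bc_P$ is closed under finite disjoint unions. For the reverse containment, \lemref{lem:algebra of sets general} reduces the task to showing $p\Omega_J\in\A_{\pmin}$ for every $p\in P$ and every finite $J\subset P$. Since $\A_{\pmin}$ is closed under left multiplication by any element of $P$ (by left-cancellativity, translated disjoint unions remain disjoint), it suffices to prove $\Omega_J\in\A_{\pmin}$ for every finite $J\subset P$, with no restriction on the location of $J$.

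The plan is to induct on the lexicographic complexity $c(J):=(n(J),k(J))$, where $n(J):=\max_{j\in J} d(j)$, $k(J):=|\{j\in J:d(j)=n(J)\}|$, and $d(j)$ is the maximum $k$ for which $j$ admits a factorisation $j=s_1\cdots s_k u$ with $s_i\in P_a$ and $u\in P^*$; this supremum is finite because $P$ is right-Noetherian. The base case $n(J)\leq 1$ is settled immediately: if $J\cap P^*\neq\emptyset$ then $\Omega_J=\emptyset$, and otherwise $J\subset P_a\subset\pmin$ so that $\Omega_J\in\A_{\pmin}$ by definition.

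For the inductive step, pick $j^*\in J$ of maximum depth $n\geq 2$ and factor $j^*=s^*\tilde{j}^*$ with $s^*\in P_a$ and $d(\tilde{j}^*)=n-1$. Splitting along the partition $P=(P\setminus s^*P)\sqcup s^*P$ and using $j^*P\subset s^*P$ together with the key identity $s^*P\cap jP = s^*\bigl(s^{*-1}(s^*\vee j)\bigr)P$ (valid whenever $s^*\vee j<\infty$), one obtains
\[
\Omega_J = \Omega_{(J\setminus\{j^*\})\cup\{s^*\}} \,\sqcup\, s^*\,\Omega_{J''},\qquad J'':=\{s^{*-1}(s^*\vee j):j\in J,\; s^*\vee j<\infty\}.
\]
Both $(J\setminus\{j^*\})\cup\{s^*\}$ and $J''$ have strictly smaller complexity than $J$. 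In the first, $j^*$ (depth $n$) is replaced by $s^*$ (depth $1$), so either $n$ itself drops or the count at $n$ is decreased by one. In the second, $\tilde{j}^*$ (depth $n-1$) replaces $j^*$, while each other element $s^{*-1}(s^*\vee j)$ left-divides $j$ because $s^*\vee j\leq s^*j$ forces $j=s^{*-1}(s^*\vee j)q$, hence has depth at most $d(j)$. Consequently any depth-$n$ element of $J''$ must come from $J\setminus\{j^*\}$, so $k(J'')\leq k(J)-1$. The inductive hypothesis yields $\Omega_{(J\setminus\{j^*\})\cup\{s^*\}},\Omega_{J''}\in\A_{\pmin}$; left-translating the second by $s^*$ and taking the disjoint union (the two pieces live in $P\setminus s^*P$ and $s^*P$ respectively) produces $\Omega_J\in\A_{\pmin}$, closing the induction.

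The main obstacle is selecting a complexity measure that strictly decreases for both pieces of the decomposition at once. A naive choice such as $\sum_j d(j)$ fails because $d(s^{*-1}(s^*\vee j))$ can equal $d(j)$ for $j\neq j^*$ (for instance in $\N^2$ with $s^*=(1,0)$ and $j=(0,1)$, where $s^*\vee j=s^*j$ so the quotient is again $(0,1)$). The two-coordinate lex order $(n(J),k(J))$ resolves this because the factorisation of the chosen $j^*$ always contributes the depth-$(n-1)$ element $\tilde{j}^*$ to $J''$, strictly decreasing the count of maximum-depth elements regardless of how the other terms behave under the operation $j\mapsto s^{*-1}(s^*\vee j)$.
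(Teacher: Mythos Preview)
Your argument contains a genuine error in the inductive step for $J''$. You claim that $s^{*-1}(s^*\vee j)$ left-divides $j$, justified by ``$s^*\vee j\leq s^*j$''. But $s^*j$ is only a right multiple of $s^*$; it need not be a right multiple of $j$, so it need not dominate $s^*\vee j$. In $B_3^+$ with $s^*=s_1$ and $j=s_2$ one has $s^*\vee j=s_1s_2s_1$ of length $3>2=\ell(s^*j)$, hence $s^*\vee j\not\leq s^*j$, and $s^{*-1}(s^*\vee j)=s_2s_1$ has depth $2>1=d(j)$. Thus the map $j\mapsto s^{*-1}(s^*\vee j)$ can strictly \emph{increase} depth, and your complexity $c(\cdot)$ need not drop on passing to $J''$. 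Concretely, for $J=\{s_1s_2,\,s_2\}$ with $c(J)=(2,1)$, choosing $j^*=s_1s_2$ and $s^*=s_1$ yields $J''=\{s_2,\,s_2s_1\}$, again with $c(J'')=(2,1)$.

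This is not repairable by merely adjusting the complexity: your proof never invokes the defining closure property of $\pmin$, so if it were valid it would in fact show $\A_{P_a}=\Bc_P$ for every Noetherian weak quasi-lattice, making the introduction of $\pmin$ superfluous. The paper's proof works differently. It arranges a double induction in which all but a controlled number of elements of the current set already lie in $\pmin$; after peeling an atom $s$ from the distinguished element, the remaining elements $q_i\in\pmin$ are sent to $s^{-1}(s\vee q_i)$, which stay in $\pmin$ \emph{by definition} of the minimal set. That closure property is exactly what absorbs the possible growth you overlooked, and it is the reason the statement is formulated for $\pmin$ rather than for $P_a$.
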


\begin{proof} By \cite[Lemma 2.4]{ALN18} it suffices to prove that $\Omega_K\in \A_{\pmin}$ for every finite subset
$K=\{q_1, \dots, q_n\}$  of $P$. We do this
 by a double induction argument: for an arbitrary $n$, we first apply induction on the number of atoms in $q_i$, and then on
 $k$,  the cardinality of $K \cap P$.\\

Suppose first  $K=\{q_1, \dots, q_n\}$  with $q_1 \in P$ and
$q_2, \dots, q_n \in \pmin$.
We use induction on the number of atoms in a factorisation of $q_1$. If $q_1$ is an atom then $q_1 \in \pmin$, then
$\Omega_K$ is in $\A_{\pmin}$ by definition. Assume now that $\Omega_K\in\A_{\pmin}$ for all $K$ as
above in which $q_1\in P$ is a product of at most $N$ atoms, and let
$q \in P$ be a product of $N+1$ atoms, so that $q = s_{1}q'$ for  $s_1\in P_a$ and
$q'$ a product of $N$ atoms.
Since $\Omega_q = (P \bs s_{1}P) \ml{\dcup} s_{1}(P \bs q'P)$, we have
\begin{equation} \label{eq:her}
\Omega_{\{q,q_2,\dots,q_n\}} = [(P \bs s_{1} P ) \dcup s_{1} (P \bs q' P )] \cap \Omega_{\{q_2,\dots,q_n\}} = \Omega_{\{s_{1},q_2,\dots,q_n\}} \dcup s_{1} (P \bs q' P) \cap  \Omega_{\{q_2,\dots,q_n\}} .
\end{equation}
By definition $\Omega_{\{s_{1},q_2,\dots,q_n\}} \in \A_{\pmin}$, so we only need to show that
$s_{1} (P \bs q' P) \cap  \Omega_{\{q_2,\dots,q_n\}}\in \A_{\pmin}$.
We rewrite this intersection as
\begin{align}\label{eq:difference using s and q}
s_{1} (P \bs q' P) \cap  \Omega_{\{q_2,\dots,q_n\}} &= s_{1} (P \bs q' P) \cap (s_{1} P \bs q_2 P) \cap
\dots \cap (s_{1} P \bs q_n P)\\
&=s_{1} (P \bs q' P) \cap ( s_{1} P \bs (s_{1} \lor q_2) P) \cap
\dots \cap ( s_{1} P \bs (s_{1} \lor q_n) P) \notag\\
&= s_{1} [(P \bs q' P) \cap (P \bs s_{1}^{-1}(s_{1} \lor q_2) P) \cap
\dots \cap (P \bs s_{1}^{-1}(s_{1} \lor q_n) P)].\notag
\end{align}

By the definition of $P_{inf}$ we have that $s_1^{-1}(s_1 \vee q_i) \in P_{inf}$, since $s_1,q_i \in P_{inf}$ for $i = 1, \dots, n$. Moreover, $q'$ is a product of $n$ atoms, thus, by the induction hypothesis, the intersection in the last line above is contained in $\A_{P_{inf}}$.
From this  we can conclude that $\A_{\pmin}$ contains the set
$\Omega_{\{q, q_2,\dots,q_n\}}\in \A_{\pmin}$ whenever $q\in P$ and $q_j\in \pmin$, for $j = 2, \dots, n$.

 If  $K=\{q_1,q_2,\dots,q_n\}$ with $q_1\in P$ and
$q_2,\dots,q_n\in \pmin$, then  $\Omega_K\in \A_{\pmin}$ by the first part of the proof.
Assume now that $\Omega_K\in \A_{\pmin}$ for all $K=\{q_1,q_2,\dots,q_n\}$ such that
$q_1, \dots, q_k \in P$ and
$q_{k+1}, \dots, q_n \in \pmin$. We aim to show that
\begin{equation}\label{eq:induction on n size of K}
\Omega_{K} \in \A_{\pmin}
\qquad \text { when }  q_1, \dots, q_{k+1} \in P \text{  and }
q_{k+2}, \dots, q_n \in \pmin.
\end{equation}
As  in the first part we use induction on the number of atoms in $q_{k+1}$. If
$q_{k+1}$ is an atom itself, then $q_{k+1} \in P_a \subset \pmin$, thus
$\Omega_{\{q_1,\dots,q_{k+1},\dots, q_n\}} \in
\A_{\pmin}$ by the induction hypothesis.
Assume now that \eqref{eq:induction on n size of K} holds true when the $(k+1)$th element  is a product of at most $M$ atoms. Suppose now the $(k+1)$th element is a product of $M+1$ atoms, and write $q_{k+1} = s q' \in P$
with $s \in P_a$ and $q' \in P$  a product of $M$ atoms.
As before,
\[
(P \bs q_{k+1}' P) = (P \bs s P )\ml{\dcup} s(P \bs q' P).
\]
 This splits $\Omega_{\{q_1,\dots,q_{k+1},\dots, q_n\}}$ into a disjoint union of two intersections. The first one has an atom $s$ in the $(k+1)$th term  so it is in $\A_{\pmin}$ by the induction hypothesis, so proving \eqref{eq:induction on n size of K}
 reduces to showing that the second one, namely
$(P \bs q_1 P) \cap \dots \cap s (P \bs q' P) \cap \dots \cap (P \bs q_n P )$ is in $ \A_{\pmin}$. This holds by the second induction hypothesis because
\begin{multline*}
(P \bs q_1 P ) \cap \dots \cap s (P \bs q' P) \cap \dots \cap (P \bs q_n P)
= \\s [ (P \bs s^{-1}(s \lor q_1)P) \cap \dots \cap (P \bs q' P) \cap \dots \cap
(P \bs s^{-1}(s \lor q_n) P))]
\end{multline*}
and $q'$ is the product of $M$ atoms. This concludes the proof.
\end{proof}

We are now ready to prove the main result of this section.

\begin{thm} \label{thm:Pmin}
Let $(G,P)$ be a weak quasi-lattice ordered group with $P$ Noetherian. Assume that we
are given a compactly aligned product system $\{X_p\}_{p \in P}$ of essential
$C^*$-correspondences over $P$ with $X_e = A$, a
homomorphism $N:P \to (0,\infty)$ and $\beta \in \R$. Then a tracial state $\tau$ on
$A$ satisfies the condition of \eqref{eqn:2.1} if and only if
\begin{equation} \label{pmin}
\tau(a) + \sum_{{ \emptyset \neq K \subset J }}
(-1)^{|K|}N(\lor K)^{-\beta}\Tr_{\tau}^{ \lor K}(a) \geq 0 \qquad \text{ for all finite }J \subset \pmin \text{ and }a \in A_+,
\end{equation}
where the terms  corresponding to $\vee K=\infty$ are set to zero.
\end{thm}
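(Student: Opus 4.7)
The plan is to prove the nontrivial direction \eqref{pmin} $\Rightarrow$ \eqref{eqn:2.1}; the converse is immediate since $\pmin \subset P$. The key idea is to view both sides of \eqref{eqn:2.1} as values of a linear functional-valued set map on $\Bc_P$, and then to exploit the identification $\Bc_P = \A_{\pmin}$ from \lemref{lem:int} to reduce positivity for an arbitrary finite $J \subset P$ to positivity for a finite $K \subset \pmin$, which is covered by the hypothesis.

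First, applying \eqref{pmin} with $J = \{s\}$ for each atom $s$ yields $\Tr_\tau^s(a) \leq N(s)^\beta \tau(a)$, so \lemref{lem:Tr-finite-onA-from-S}(i) ensures $\Tr_\tau^p$ is a finite tracial functional on $A$ for every $p \in P$. For each finite $K \subset \pmin$ set
\[
\sigma_K(a) := \sum_{L \subset K}(-1)^{|L|} N(\vee L)^{-\beta}\Tr_\tau^{\vee L}(a),
\]
with the convention that terms with $\vee L = \infty$ are zero. By the hypothesis \eqref{pmin}, $\sigma_K$ is a positive tracial functional on $A$. For $p \in P$ and finite $K \subset \pmin$, the identity $\bigvee\{pk : k \in L\} = p(\vee L)$, the multiplicativity of $N$, and induction in stages (see \eqref{eq:Tr-decomp}) yield
\[
W_{p,K}(a) := \sum_{L \subset K}(-1)^{|L|} N(p(\vee L))^{-\beta}\Tr_\tau^{p(\vee L)}(a) = N(p)^{-\beta}\Tr_{\sigma_K}^p(a),
\]
which is nonnegative on $A_+$ because inducing preserves positivity (cf.\ \lemref{lem:trace}).

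Using the linear independence of $\{1_{pP} : p \in P\}$ in $\ell^\infty(P)$ (cf.\ the proof of \thmref{thm:ALN 2.1 weak qlo}, which applies here since the weak quasi-lattice assumption gives $P^* = \{e\}$), I define a linear map $\Phi$ on $\operatorname{span}\{1_{pP}\}$ with values in tracial functionals on $A$ by $\Phi(1_{pP}) := N(p)^{-\beta}\Tr_\tau^p$. Inclusion--exclusion applied to $\Omega_J = \bigcap_{q \in J}(P \setminus qP)$ and to $p\Omega_K = pP \setminus \bigcup_{k \in K}pkP$, together with the observation $\bigcap_{k \in L}pkP = p(\vee L)P$, gives
\[
\Phi(1_{\Omega_J})(a) = Z_J(a) \qquad\text{and}\qquad \Phi(1_{p\Omega_K})(a) = W_{p,K}(a),
\]
where $Z_J$ denotes the left-hand side of \eqref{eqn:2.1}. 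Given an arbitrary finite $J \subset P \setminus \{e\}$, \lemref{lem:int} supplies a disjoint union decomposition $\Omega_J = \bigsqcup_{j=1}^n p_j\Omega_{K_j}$ with $K_j \subset \pmin$, whence $1_{\Omega_J} = \sum_j 1_{p_j\Omega_{K_j}}$ as functions on $P$. Applying $\Phi$ concludes $Z_J(a) = \sum_j W_{p_j, K_j}(a) \geq 0$.

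The main delicacy is that the identity $Z_J(a) = \sum_j W_{p_j, K_j}(a)$ depends on both sides reducing to the same linear combination of the functionals $N(q)^{-\beta}\Tr_\tau^q$. This is precisely where the linear independence of $\{1_{pP}\}$ enters: it allows the pointwise set identity $\Omega_J = \bigsqcup_j p_j\Omega_{K_j}$ to be promoted to an equality at the level of functionals. Granted that framework, the argument reduces to combining the combinatorial reduction in \lemref{lem:int} with the positivity computation for $W_{p,K}$ above.
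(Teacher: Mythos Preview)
Your proof is correct and follows essentially the same approach as the paper. The paper phrases the key object as a finitely additive measure $\mu$ on $(P,\Bc_P)$ with values in finite traces of $A$, defined by $\mu(pP)=N(p)^{-\beta}F_p(\tau)$, whereas you phrase it as a linear map $\Phi$ on $\operatorname{span}\{1_{pP}\}$; these are the same object, and both proofs then invoke \lemref{lem:int}, compute $\mu(p\Omega_K)=N(p)^{-\beta}F_p(\mu(\Omega_K))$ (your $W_{p,K}=N(p)^{-\beta}\Tr^p_{\sigma_K}$), and conclude by finite additivity.
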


\begin{proof}
Obviously \eqref{eqn:2.1} implies \eqref{pmin}.  The proof that
 \eqref{pmin} implies \eqref{eqn:2.1} goes along the lines of the proof of  \cite[Theorem 9.1]{ALN18}, which is the particular case of  right-angled Artin monoids.

 Suppose $\tau$ is a tracial state of $A$ satisfying \eqref{pmin}. By taking $J = \{s\}$ for $s \in P_a$  we see  that
$F_s(\tau) (a) \leq N(s)^{\beta}\tau(a) < \infty$ for all $s \in P_a$ and $a \in A_+$.
By \lemref{lem:Tr-finite-onA-from-S}, we have that $\Tr_{\tau}^p(a)$ is finite for all $p \in P$ and $a \in A_+$.

Next recall that $\mathcal{B}_P$ is the algebra generated by the collection $\{pP: p\in P\}$, and for each  $p\in P$ define
\begin{align*}
\mu(pP) = N(p)^{-\beta}F_p(\tau) \text{ for }p\in P.
\end{align*}
Then $\mu$ extends to a finitely additive measure, which we also denote by $\mu$, defined on $(P,\mathcal{B}_P)$ with values in the finite traces of $A$.
Evaluating $\mu$ at the sets $\Omega_J = \bigcap_{p \in J} ( P \bs p P)$
for finite nonempty subsets $J \subset P\bs\{e\}$ yields
\begin{align*}
\mu(\Omega_J)=\mu(P) - \mu ( \bigcup_{p \in J} pP ) =
F_e(\tau) + \sum_{\emptyset \neq K \subset J} (-1)^{|K|}N(\lor K)^{-\beta} F_{\lor K}(\tau).
\end{align*}
Thus our assumption \eqref{pmin} guarantees that $\mu(\Omega_J)\geq 0$ for all finite subsets
$J\subset \pmin$ and we must prove that this positivity holds for all finite subsets $J \subset P\setminus\{e\}$.
By \Cref{lem:int}  it suffices to  show that $\mu$ is positive on all sets in the family $\A_{\pmin}$. Fix a finite subset $K$ of $\pmin$. For $p \in P$ we have that
\begin{align*}
\mu(p \Omega_K) &= N(p)^{-\beta}F_p(\tau) + \sum_{\emptyset \neq H \subset K}
(-1)^{|H|}N(p(\lor H))^{-\beta}F_{p(\lor H)}(\tau) \\
&= N(p)^{-\beta}F_p(\mu(\Omega_K)),
\end{align*}
and the right hand side is positive because $\mu(\Omega_K)$ is positive by assumption. By finite additivity, $\mu$ is positive on every set in $\A_{\pmin}$, which completes the proof.
\end{proof}

\begin{exmp}  By exhibiting classes of examples, we point out that  the set $\pmin$ can be finite, in which case \Cref{thm:Pmin}  provides a significant reduction, and that the inclusion $P_a \subset \pmin$ can be strict for some monoids and an equality for others.

(i) When $P=A_M^+$ is an Artin monoid with finite generating set,  the set $\pmin$ finite because it is contained in the finite Garside family shown to exist in \cite[Theorem 1.1]{DDH15}.

(ii) If $P$ is  a  finite-type Artin monoid that is not abelian, then  there exist canonical generators $s, t$ such that $m_{s,t} > 2$, so that
$s^{-1}(s\lor t) = \la t s\ra^{m_{s,t}-1} \notin S$. Thus $\pmin \neq S$. In this case the arguments of \Cref{sec:tree} yield the  reduction of positivity to generators, which is stronger than \Cref{thm:Pmin}.

(iii) In contrast, when $P$ is a right-angled Artin monoid it is easy to see from \cite[Lemma 9.2]{ALN18} that  $\pmin$ is equal to the set of canonical generators. Formally, this recovers
 \cite[Theorem 9.1]{ALN18}  as a corollary of  \Cref{thm:Pmin}, but unlike  \Cref{pro:finitetree-RAAMcase},
 this proof is not really different since the proof of \thmref{thm:Pmin} is modelled on that of  \cite[Theorem 9.1]{ALN18}.  \end{exmp}

\section{$\KMS$-gaps}\label{sec: gaps}


We assume in this section that $(G,P)$ has no nontrivial invertible elements, that is, $P\cap P\inv =\{e\}$.
 It is known that the associated (full) semigroup $C^*$-algebra $C^*(P)$ may be viewed as the Nica-Toeplitz algebra $\Nica \T (X)$ constructed from the product system $X =\{X_p\}_{p \in P}$
with one-dimensional fibres $X_p = \C$, where the left action is given by complex multiplication for all $p \in P$, see \cite[Proposition 5.6]{SY10}. Suppose that $N:P\to (0,\infty)$, let $\sigma_t(v_p)=N(p)^{it} v_p$ for $p\in P$ and $t\in \R$ and let us analyse \eqref{eqn:2.1} for
$(C^*(P),\sigma)$.

Since $A = X_e \cong \C$,  a tracial state $\tau$ on $A$ is simply $\tau = id$. Moreover, for each finite non-empty $K\subset P$ such that $\vee K<\infty$ we have $\Tr_{\tau}^{\lor K} = \tau = id$. Thus for given $\beta\in \R$, \eqref{eqn:2.1} rewrites as
\begin{equation}\label{eq:2.1 trivial product system}
1 + \sum_{\emptyset \neq K \subset J} (-1)^{|K|} N(\lor K)^{-\beta}\geq 0,  \text{ for all finite nonempty }J\subset P\setminus\{e\}.
\end{equation}
This condition implies the existence of a $\KMS_\beta$-state for
$(C^*(P),\sigma)$ by \thmref{thm:sufficient gauge inv}. This condition also implies that the finitely additive measure on $\Bc_P$ given by
\begin{equation}\label{eq:measure on pP}
\mu(pP)=N(p)^{-\beta} \text{ for }pP\in \Bc_P
\end{equation}
extends to a genuine measure on the $\sigma$-algebra generated by $\{pP:p\in P\}$.

 A refinement of this observation was implicit already in \cite{ALN18}. More precisely, it was shown in \cite[Lemma 7.2]{ALN18} that every finitely additive measure $\mu$ on $(P,\Bc_P)$ such that $\sum_{p\in P}\mu(pP)<\infty$ extends to a genuine measure on the
 $\sigma$-algebra of all subsets of $P$. When $\mu$ is as in \eqref{eq:measure on pP}, we are asking that $\sum_{p\in P}N(p)^{-\beta}<\infty$. Recall from \cite[Definition 7.8]{ALN18} that the infimum of all such real $\beta$ is the \emph{critical inverse temperature} of the system $(C^*(P),\sigma)$, where we assume that $N(p)\geq 1$ for all $p\in P$. When $\beta_c$ is finite, there is a unique
 $\KMS_\beta$-state for every $\beta>\beta_c$, and it is of finite type in the sense of
 \cite[Definition 6.4]{ALN18}, see also \cite[Example 9.6]{ALN18}. Assuming further that $P$ is Noetherian, that $P$ has a finite set of atoms and that $N(p)>1$ for all
 $p\in P\setminus\{e\}$, any possible $\KMS_\beta$-state of infinite type will arise at values $\beta$ where
 \[
 1+\sum_{\emptyset\neq K\subset P_a} (-1)^{\vert K\vert} N(q_K)^{-\beta}=0,
 \]
 cf. \cite{BLRS19} and \cite{ALN18}. Note that this condition means that \eqref{eq:2.1 trivial product system} reduces to having equality at the single subset $J = P_a$ of $P\setminus\{e\}$.

 We note at this point that in case $(G,P)$ is lattice ordered, then for any homomorphism $N:P\to (0,\infty)$, condition \eqref{eq:2.1 trivial product system} is trivially satisfied at $\beta = 0$ for every $J$, because it reduces to $\sum_{K\subset J}(-1)^{\vert K \vert}=0$, which follows by the binomial formula. Thus $(C^*(P),\sigma)$ admits a gauge-invariant
 $\KMS_0$-state. We note that this was first proved in \cite[Proposition 3.7]{BLRS19}, cf. the equivalence of (2) and (4), which does not require the additional assumption that $N(p)=1$ only if $p=e$.
 In this case any $\KMS_0$-state is of infinite type \cite[Corollary 6.10(i)]{ALN18}.

 \subsection{Artin monoids of finite type and $\KMS$-gaps}
Next we wish to apply the results of the previous sections to illustrate a new phenomenon in the context of $C^*$-algebras of Artin monoids, namely the appearance of gaps in the subset of inverse temperatures that support $\KMS$-states. The next definition makes this precise.

\begin{defn}
Let $(G,P)$ be quasi-lattice ordered. Assume that we are given a compactly aligned product
system $\{X_p\}_{p \in P}$ of $C^*$-correspondences over $P$ with $X_e = A$ and a homomorphism $N: P \to (0, \infty)$ .
We say that $(\Nica \T(X),\sigma)$ has a $\operatorname{KMS}$-gap if
\begin{align*}
\{ \beta \in \R \: | \: \text{ there exists a gauge-invariant } \: \operatorname{KMS}_{\beta}\text{-state} \}\text{ is disconnected}.
\end{align*}
\end{defn}

A large class of monoids where there are no  $\KMS$-gaps is provided by non-abelian right-angled Artin monoids, cf. the last paragraph of Example 9.6  of \cite{ALN18}. More precisely, let $P$ be a non-abelian right-angled Artin monoid with  finite generating set $S\subset P\setminus\{e\}$. Let $N:P\to (0,\infty)$ be a homomorphism such that $N(p)=1$ only for $p=e$. Then $P$ is not lattice ordered,  $\beta_c>0$ by \cite[Proposition 3.7]{BLRS19}, and the possible behavior of $\KMS_\beta$-states is as follows: there are none for $\beta<\beta_c$, there is a unique $\KMS_{\beta_c}$-state, which is of infinite type, and  for each $\beta>\beta_c$ there is a unique $\KMS_\beta$-state, which is of finite type. In particular, there are no KMS-gaps.

Next we see that the Artin braid monoids $B_n^+$ for $n\geq 3$ offer a different picture.

\begin{prop}\label{prop:Bn has interval with no KMS}
Suppose $n \geq 3$ and let $G$ be the braid  group $B_n$ with generating
set $\{s_1,s_2,\dots,s_{n-1}\}$ and relations
\[\begin{array}{ll}
 s_i s_j s_i = s_j s_i s_j &\text{ when } \vert i-j\vert =1\\
s_is_j = s_js_i &\text{ when } \vert i-j\vert\geq 2.
\end{array}
\]
Let $B_n^+$ be the associated braid monoid and
let $N:B_n^+\to [1,\infty)$ be the homomorphism given by $N(p)=\exp(\ell(p))$ for $p\in B_n^+$, where $\ell$ is the normalised length function on $B_n^+$. Then $(C^*(B_n^+),\sigma)$ has a gauge-invariant
$\KMS_0$-state and no $\KMS_\beta$-states in the interval $(0,a)$, where $\exp(-a)= \sqrt{5}/2-1/2 \approx 0.61803$ is (the reciprocal of) the golden ratio.
\end{prop}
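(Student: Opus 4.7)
The plan is to handle the two assertions separately, both relying on the specialisation \eqref{eq:2.1 trivial product system} of the positivity criterion to product systems with one-dimensional fibres.

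For the $\KMS_0$-state I would first note that $B_n^+$ is a Garside monoid, so any two elements admit a common right multiple; hence $(B_n, B_n^+)$ is lattice ordered in the sense used in the paragraph following \eqref{eq:2.1 trivial product system}. That remark then shows that at $\beta = 0$ the inequality \eqref{eq:2.1 trivial product system} reduces to the binomial identity $\sum_{K\subset J}(-1)^{|K|}=0$ and is therefore trivially satisfied for every finite $J \subset B_n^+ \setminus \{e\}$. Applying \thmref{thm:sufficient gauge inv} with $\tau = \mathrm{id}$ then produces the desired gauge-invariant $\KMS_0$-state on $C^*(B_n^+)$.

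For the non-existence in $(0,a)$ I would argue by contradiction. Suppose $\phi$ is a $\KMS_\beta$-state of $(C^*(B_n^+),\sigma)$ for some $\beta\in(0,a)$. By \thmref{thm:ALN 2.1 weak qlo}, its restriction $\tau = \phi|_{\C} = \mathrm{id}$ must satisfy \eqref{eq:2.1 trivial product system} for every finite $J\subset B_n^+\setminus\{e\}$. I test this at $J=\{s_1,s_2\}$: since $m_{s_1,s_2}=3$, \lemref{bound} gives $s_1\vee s_2 = s_1s_2s_1$, which has length $3$ under the normalised length function. Thus \eqref{eq:2.1 trivial product system} becomes
\begin{equation*}
1 - 2e^{-\beta} + e^{-3\beta} \geq 0.
\end{equation*}
Setting $x = e^{-\beta}$, one factors $x^3 - 2x + 1 = (x-1)(x^2+x-1)$; the positive root of the quadratic factor is $(\sqrt{5}-1)/2$, which by definition of $a$ equals $e^{-a}$. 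For $\beta\in(0,a)$ we have $x = e^{-\beta}\in(e^{-a},1)$, so $(x-1)<0$ and $(x^2+x-1)>0$, making the product strictly negative. This contradicts the required inequality and rules out any $\KMS_\beta$-state in the open interval.

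The real work is in identifying the correct single test-subset; once $J=\{s_1,s_2\}$ is chosen, the argument collapses to an elementary polynomial factorisation and no reduction-to-atoms machinery from \secref{sec:tree} or \secref{sec:reduction minimal set all qlo} is needed for the non-existence direction, because necessity of \eqref{eq:2.1 trivial product system} from \thmref{thm:ALN 2.1 weak qlo} already applies to every finite $J$.
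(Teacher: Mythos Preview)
Your proof is correct and follows essentially the same route as the paper: the lattice-ordered observation handles $\beta=0$ via the binomial identity, and the single test set $J=\{s_1,s_2\}$ together with the cubic $1-2t+t^3$ rules out $(0,a)$. Your explicit factorisation $(x-1)(x^2+x-1)$ and your appeal to \thmref{thm:ALN 2.1 weak qlo} for necessity are minor presentational additions, but the argument is the same.
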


\begin{proof} Since $P =B_n^+ $ is  lattice ordered, there is a gauge-invariant $\KMS_0$-state, as we already observed. In order to show there is a $\operatorname{KMS}$-gap  that extends from $0$ to at least  $a$ we show that  \eqref{eq:2.1 trivial product system} fails for the subset $J = \{s_1,s_2\}$, that is
\[
g_J(\beta)= 1+ \sum_{\emptyset \neq K \subset J} (-1)^{|K|} N(\vee K)^{-\beta}
\]
is strictly negative  in the interval $(0,a)$.
Since   $\ell(s_1 \lor s_2) = \ell(s_1s_2s_1) = 3$, is is clear that
\[
g_{\{s_1,s_2\}}(\beta)=  1 - 2e^{-\beta} + (e^{-\beta})^3.
\]
The polynomial $1-2t+t^3$ is negative in the interval $ (\frac{\sqrt{5}}{2} - \frac{1}{2},  1)$ determined by its two positive roots, so if we set $t=e^{-\beta}$ we see that $g_{\{s_1,s_2\}}(\beta)<0$ for $\beta\in (0,a)$. Hence \eqref{eq:2.1 trivial product system} fails for $J = \{s_1, s_2\}$ and all $\beta$ in $(0,a)$, so there are no  $\KMS_\beta$-states  in $(0,a)$.
 \end{proof}

\begin{corollary}

\begin{enumerate}
\item Let $t_1=\sqrt{5}/2-1/2 \approx 0.618$ be the smallest positive root of the clique polynomial $1-2t +t^3$ of $B_3^+$, and define $a = -\log t_1$.
Then the inverse temperature space of $(C^*(B_3^+),\sigma)$  is $\{0\} \cup [a,\infty]$

\item Let $r_1$ be the smallest positive root of $1-2t-t^2+t^3+t^4 + t^5$, and $b = -\log r_1$. Then
 the inverse temperature space of $(C^*(B_4^+),\sigma)$  is $\{0\} \cup [b,\infty]$.

\end{enumerate}
\end{corollary}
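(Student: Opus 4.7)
The plan is to combine \corref{cor:directedcase}, which reduces positivity to subsets of the generating set for finite-type Artin monoids, with \thmref{thm:sufficient gauge inv}, which translates the reduced positivity condition into existence of a gauge-invariant $\KMS_\beta$-state. Setting $t := e^{-\beta}$, each inequality $g_J(\beta) := 1 + \sum_{\emptyset \neq K \subset J} (-1)^{|K|} N(\vee K)^{-\beta} \geq 0$ becomes a polynomial inequality in $t$ whose exponents are the Garside lengths $\ell(\vee K)$, and the inverse temperature space is read off from the joint nonnegativity region of the resulting finite family, together with the $\KMS_0$-state provided by the lattice-ordered argument of \secref{sec: gaps} and a ground state at $\beta = \infty$.

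For part (1), the generating set of $B_3^+$ has only the pair $\{s_1, s_2\}$ as nontrivial subset besides the singletons, with $s_1 \vee s_2 = s_1 s_2 s_1$ of length $3$. Factoring $1 - 2t + t^3 = (t-1)(t^2 + t - 1)$ and noting that the singleton conditions $1 - t \geq 0$ hold exactly for $\beta \geq 0$, a direct sign analysis on $(0,1]$ shows the joint positivity region in $t$ is $\{1\} \cup [0, t_1]$, which in terms of $\beta$ is $\{0\} \cup [a, \infty]$.

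For part (2), the seven nonempty subsets of $S = \{s_1, s_2, s_3\}$ yield four distinct nontrivial polynomial conditions. Singletons give $1 - t \geq 0$; the commuting pair $\{s_1, s_3\}$, with $s_1 \vee s_3 = s_1 s_3$, gives $(1-t)^2 \geq 0$, always true; the braided pairs $\{s_1, s_2\}$ and $\{s_2, s_3\}$ reproduce $1 - 2t + t^3 \geq 0$; and the triple $\{s_1, s_2, s_3\}$, whose cliques have least common upper bounds of lengths $0, 1, 1, 1, 2, 3, 3, 6$ (with the length-$6$ term being $\Delta_4 = s_1 s_2 s_1 s_3 s_2 s_1$), yields
\begin{equation*}
g_{\{s_1, s_2, s_3\}}(\beta) = 1 - 3t + t^2 + 2t^3 - t^6 = (1-t)\, q(t),
\end{equation*}
where $q(t) = 1 - 2t - t^2 + t^3 + t^4 + t^5$. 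On $t \in (0,1)$ the triple condition is equivalent to $q(t) \geq 0$.

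The crux is to interlace the positive roots of $q$ with $t_1$. I would verify $q(0) = 1$, $q(1/2) = -1/32$, $q(1) = 1$, and use the identity $t_1^2 = 1 - t_1$ to reduce $q(t_1)$ to $3 t_1 - 2 < 0$. These place the smallest positive root $r_1$ of $q$ in $(0, 1/2)$, in particular $r_1 < t_1$, and force the next root $r_2$ into $(t_1, 1)$. Consequently, for $t \in [0, r_1]$ all conditions hold (since $r_1 < t_1$); for $t \in (r_1, r_2)$ the triple condition fails; and for $t \in [r_2, 1)$ one has $t > t_1$, so the braided pair condition $1 - 2t + t^3 \geq 0$ fails. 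Translating back to $\beta$ and adjoining $\beta = 0$ and $\beta = \infty$ as before gives the inverse temperature space $\{0\} \cup [b, \infty]$. The main delicate point is the clean algebraic computation $q(t_1) = 3 t_1 - 2$, which ensures that the gap is described by $r_1$ alone rather than requiring a subtler interlacing analysis.
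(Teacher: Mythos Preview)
Your approach is correct and takes a more self-contained route than the paper. The paper's proof leans on three external inputs from \cite{BLRS19}: that the clique polynomial is the reciprocal of the growth series (via \cite{Saito}), so that $a$ and $b$ coincide with the critical inverse temperatures $\beta_c$; that existence of $\KMS_\beta$-states on $\{0\} \cup [\beta_c, \infty]$ follows from \cite[Theorem~3.5 and Proposition~3.7]{BLRS19}; and that any $\KMS_\beta$-state with $\beta \in (0,\beta_c)$ would force $e^{-\beta}$ to be a root of the clique polynomial by \cite[Proposition~4.5]{BLRS19}, reducing the non-existence check to finitely many points. You instead handle both existence and non-existence directly through the reduced positivity criterion of \corref{cor:directedcase} and \thmref{thm:sufficient gauge inv}, which is cleaner and avoids identifying $\beta_c$ separately. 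Your interlacing device, computing $q(t_1) = 3t_1 - 2 < 0$ via $t_1^2 = 1 - t_1$, is a nice touch that the paper does not use.

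There is one small gap. Your assertion that ``the next root $r_2$'' lies in $(t_1,1)$ does not follow from the four evaluations $q(0) > 0$, $q(1/2) < 0$, $q(t_1) < 0$, $q(1) > 0$ alone: these do not exclude a pair of additional roots of $q$ inside $(1/2, t_1)$, and if such roots existed the triple condition would hold on a subinterval of $(r_1, t_1)$ where the pair condition $1 - 2t + t^3 \geq 0$ also holds, yielding extra $\KMS$-states. The fix is a single line: the coefficient sequence $1, -2, -1, 1, 1, 1$ of $q$ has exactly two sign changes, so by Descartes' rule $q$ has at most two positive real roots, and your sign data then pins them down to $(0,1/2)$ and $(t_1,1)$ as claimed. (Incidentally, the paper's own proof asserts three roots of $q$ in $(0,1)$ at approximately $0.659$ and $0.874$ besides $r_1$; Descartes shows this is impossible, and your two-root count is the correct one.)
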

\begin{proof} The clique polynomial is the reciprocal of the growth series, see \cite{Saito}, so $a$ and $b$ are the critical temperatures for $B_3^+$ and $B_4^+$ respectively and we know from \cite[Theorem 3.5]{BLRS19} and \cite[Proposition 3.7]{BLRS19} that there is a  $\KMS_\beta$-state for each $\beta \in \{0\} \cup [\beta_c,\infty]$, which is unique for $\beta\neq 0$.
Recall also from \cite[Proposition 4.5]{BLRS19} that if there exists a
$\KMS_{\beta}$-state for some $ \beta \in (0, \beta_c)$, then  $e^{-\beta}$ has to be a root of the clique polynomial in the interval $(e^{-\beta_c},1)$;  however, \cite{BLRS19} does not decide whether there are any $\KMS_{\beta}$-states corresponding to any intermediate roots.
Our positivity criterion allows us to retrieve the known part of the temperature space and show that there are no  KMS states in $(0,\beta_c)$.

We deal with $B_3^+$ first. Since here is only one relation, namely $s_1s_2s_1=s_2s_1s_2$,  and all subsets are cliques, the clique polynomial is $1-2t+t^3$, which has roots $-\sqrt{5}/2-1/2<0$, $t_1 = \sqrt{5}/2-1/2$, and $1$ and is strictly positive on $(0,t_1)$. The other cliques have polynomials $1$ and $1-t$, which are  positive on all of $(0,1)$. Hence \eqref{eq:2.1 trivial product system} holds for $\beta\geq \beta_c$, proving the assertion about   $B_3^+$.

 The relations in $B_4^+$ are
  \[
s_1s_2s_1=s_2s_1s_2, \ s_2s_3s_2=s_3s_2s_3\text{ and }s_1s_3=s_3s_1,
  \]
  and since $B_4$ is finite type, all the subsets of $S$ are cliques:
  \[
  \emptyset,\  \{s_1\},\  \{s_2\},\  \{s_3\},\  \{s_1,s_2\},\  \{s_2,s_3\},\  \{s_1,s_3\},\  \{s_1, s_2, s_3\}
   \]
  The common upper bounds of all but the full clique have been computed above, and $\vee \{s_1, s_2, s_3\} = s_3 s_2 s_1s_3 s_2 s_3$, which has length $6$.
Thus, the  clique polynomial for $B_4^+$ 
written with  $t=e^{-\beta}$, is $h(t)=1-3t+t^2+2t^3-t^6 = (1-t) (1-2t-t^2+t^3+t^4+t^5)$.

  We may now use the criterion of \eqref{eq:2.1 trivial product system} for cliques $J$ in $S$ to check whether there is a gauge-invariant $\KMS_\beta$-state at $\beta=- \log r_1$.  From the proof of \proref{prop:Bn has interval with no KMS} we know that the condition is trivially satisfied for the choices $J=\{s_1,s_2\}$ and $J=\{s_2,s_3\}$. For the choice $J=\{s_1,s_3\}$, the corresponding polynomial $1-2t+t^2$ is nonnegative everywhere. Finally, the one-element choices of $J$ yield the polynomial $1-t$, which is positive at $t_1$. Hence there is a gauge-invariant $\KMS_\beta$-state at $\beta=-\log r_1$ in view of \corref{cor:directedcase} and \cite[Theorem 5.1]{ALN18}.
In  the interval $(0,1)$ the polynomial $1-2t-t^2+t^3+t^4 + t^5$ has two other roots, $r_2 \approx  0.659$ and $r_3 \approx 0.874$. But since the positivity condition fails at both these roots for the clique polynomial $1-2t +t^3$ of  $J =\{s_1, s_2\}$, there are no  $\KMS_\beta$-state for $\beta \in (0,b)$. This completes the proof for $B_4^+$.
\end{proof}

\section{Characterisation of reduction to generators} \label{sec:alt}
In this section we investigate more closely conditions under which the positivity condition \eqref{pmin} for the minimal set $\pmin$ can be reduced
to a smaller subset of inequalities, namely those involving only finite subsets $J$ of the set $P_a$ of atoms. We assume throughout that $(G,P)$ is a weak quasi-lattice ordered group.
Recall that the key ingredient in the proof of \Cref{thm:Pmin} is \lemref{lem:int} showing that the collection $\A_{\pmin}$ defined via \eqref{eq:sets in A0} contains all sets
$\Omega_J$ for $J \subset P\bs \{e\}$ and hence is the algebra $\Bc_P$ from \cite[Section 2]{ALN18}.
 \begin{thm}\label{thm:redifalg}
If the collection
\[
\A_{P_a} = \{ \sqcup_{j=1}^n p_j\Omega_{K_j} : p_j\in P \text{ and } K_j \subset P_a \text{ finite or empty}\}
\]
 is an algebra, then $\A_{P_a} = \mathcal{B}_P$ and reduction of the  positivity condition  to generators  holds for $P$.
\end{thm}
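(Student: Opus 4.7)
The plan is to establish the equality $\A_{P_a}=\mathcal{B}_P$ first and then deduce the reduction of positivity by essentially rerunning the argument of \thmref{thm:Pmin} with $\pmin$ replaced by $P_a$.

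For the inclusion $\mathcal{B}_P\subseteq \A_{P_a}$, note that $pP=p\Omega_\emptyset\in \A_{P_a}$ for every $p\in P$; since $\mathcal{B}_P$ is the smallest algebra containing $\{pP\mid p\in P\}$ and $\A_{P_a}$ is an algebra by hypothesis, this inclusion is immediate. For the reverse inclusion it suffices to show $p\Omega_K\in \mathcal{B}_P$ for every $p\in P$ and every finite $K\subseteq P_a$. Since $\mathcal{B}_P$ is closed under complements and finite intersections, $\Omega_K=\bigcap_{k\in K}(P\setminus kP)\in \mathcal{B}_P$. To produce the translate, I would argue by induction on the construction of elements of $\mathcal{B}_P$ as set-algebraic expressions in the generators $\{qP\}$ that left translation $S\mapsto pS$ preserves $\mathcal{B}_P$. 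The base case is $p(qP)=pqP\in \mathcal{B}_P$, and the inductive step uses left cancellativity of $P$ through the identities $p(A\cup B)=pA\cup pB$, $p(A\cap B)=pA\cap pB$, and $p(P\setminus A)=pP\setminus pA$.

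For the reduction of positivity, I would assume $\tau$ is a tracial state on $A$ satisfying \eqref{eqn:atomicpositivity} for every finite $J\subset P_a$. By \lemref{lem:Tr-finite-onA-from-S}, $\Tr_\tau^p$ is then a finite trace on $A$ for every $p\in P$, so the assignment $\mu(pP):=N(p)^{-\beta}F_p(\tau)$ extends to a finitely additive, positive-trace-valued set function on $\mathcal{B}_P$ exactly as in the proof of \thmref{thm:Pmin}. For every finite $K\subseteq P_a$, inclusion-exclusion gives
\begin{equation*}
\mu(\Omega_K)(a)=\tau(a)+\sum_{\emptyset\neq H\subseteq K}(-1)^{|H|}N(\vee H)^{-\beta}\Tr_\tau^{\vee H}(a),
\end{equation*}
which is nonnegative on $A_+$ by assumption. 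Using the relation $\mu(p\Omega_K)=N(p)^{-\beta}F_p(\mu(\Omega_K))$ together with the fact that inducing preserves positivity (\lemref{lem:trace}), $\mu$ is nonnegative on every building block $p\Omega_K$ with $K\subseteq P_a$ finite, hence, by finite additivity, on every element of $\A_{P_a}$. Since $\A_{P_a}=\mathcal{B}_P$, in particular $\mu(\Omega_J)\geq 0$ for every finite $J\subset P\setminus\{e\}$, which is precisely \eqref{eqn:2.1}.

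The main conceptual obstacle is the identification $\A_{P_a}=\mathcal{B}_P$; the verification that $\mathcal{B}_P$ is stable under left translation is the only slightly delicate ingredient, but it reduces to an elementary structural induction using left cancellativity. Everything else is a direct transcription of the proof of \thmref{thm:Pmin}, with $\pmin$ replaced throughout by $P_a$ and with the assumed algebra property of $\A_{P_a}$ absorbing the inductive arguments that were previously needed (via \lemref{lem:int}) to put the sets $\Omega_J$ into $\A_{\pmin}$.
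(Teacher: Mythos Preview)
Your proof is correct and follows essentially the same approach as the paper: you obtain $\mathcal{B}_P\subseteq\A_{P_a}$ from the fact that $pP=p\Omega_\emptyset\in\A_{P_a}$ together with the algebra hypothesis (the paper cites \lemref{lem:comp} instead), and you then rerun the argument of \thmref{thm:Pmin} with $P_a$ in place of $\pmin$, exactly as the paper instructs. Your structural-induction argument for left-translation stability of $\mathcal{B}_P$ is correct but more than is needed, since $p\Omega_K=pP\setminus\bigcup_{k\in K}pkP$ is manifestly in $\mathcal{B}_P$; the paper simply takes the inclusion $\A_{P_a}\subseteq\mathcal{B}_P$ as evident.
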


\begin{proof}
By \Cref{lem:comp}, $pP \in \A_{P_a}$ for all $p \in P$. Now, $\mathcal{B}_P$ is the algebra (closed under finite unions, intersections and complements) generated by the sets $pP$, so if $\A_{P_a}$ is an algebra we get that $\mathcal{B}_P \subseteq \A_{P_a}$, which implies that
$\A_{P_a} = \A_{\pmin}$. Following the proof \Cref{thm:Pmin} we see that condition \eqref{eqn:2.1} can be reduced to subsets of $P_a$.
\end{proof}

In view of this we aim to find conditions ensuring that $\A_{P_a}$ is an algebra. By \lemref{lem:comp} the collection $\A_{P_a}$ contains the sets $pP$ and their  complements $P\setminus pP$, and obviously $\A_{P_a}$ is also closed under finite disjoint unions.

\begin{lem} \label{lem:union}
Let $(G,P)$ be a weak quasi-lattice ordered group with $P$ Noetherian. If  $\A_{P_a}$ is
closed under finite intersections, then it contains all arbitrary finite unions of sets of the form $pP$ with $p \in P$.
\end{lem}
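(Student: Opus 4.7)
Fix $p_1,\dots,p_n \in P$; the goal is to show that $\bigcup_{k=1}^n p_kP$ belongs to $\A_{P_a}$. The plan is to apply the standard disjointification trick, rewriting
\[
\bigcup_{k=1}^n p_kP \;=\; \bigsqcup_{k=1}^n \Big(p_kP \;\cap\; \bigcap_{j<k}(P\setminus p_jP)\Big),
\]
and then verifying that each term on the right lies in $\A_{P_a}$.

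The first step is to invoke \lemref{lem:comp}, which tells us that for every $p \in P$ both $pP$ and its complement $P\setminus pP$ belong to $\A_{P_a}$. Next, each piece $p_kP \cap \bigcap_{j<k}(P\setminus p_jP)$ is a finite intersection of elements of $\A_{P_a}$, so by the standing hypothesis of the lemma it too lies in $\A_{P_a}$.

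The final step is a bookkeeping observation: $\A_{P_a}$ is closed under disjoint unions by its very definition. Indeed, if $A = \bigsqcup_i p_i\Omega_{K_i}$ and $B = \bigsqcup_j q_j\Omega_{L_j}$ are disjoint elements of $\A_{P_a}$, then $A \sqcup B$ is itself presented as a finite disjoint union of sets of the form $p\Omega_K$ with $K \subset P_a$ finite, hence lies in $\A_{P_a}$. Applying this to the disjoint decomposition above yields $\bigcup_{k=1}^n p_kP \in \A_{P_a}$, as required.

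There is no serious obstacle here: the heavy lifting, namely the explicit expression of $P\setminus pP$ as a disjoint union of sets $p'\Omega_{\{s\}}$ with $s \in P_a$, was already carried out in \lemref{lem:comp}. The present lemma is essentially the observation that, once finite intersections are available, the disjointification trick converts arbitrary finite unions of principal right ideals into disjoint unions of the required form. Note that the Noetherian hypothesis on $P$ is used only implicitly, through its role in \lemref{lem:comp} (which guarantees that each $p$ admits a finite factorisation into atoms).
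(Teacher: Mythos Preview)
Your proof is correct and takes a genuinely different route from the paper's. The paper argues by induction on $n$: at the inductive step it splits
\[
\bigcup_{i=1}^{n+1} p_iP \;=\; B_1 \sqcup B_2 \sqcup B_3,
\]
where $B_1 = \bigcap_{i\le n}(p_iP)^c \cap p_{n+1}P$, $B_2 = \bigcup_{i\le n}(p_i\lor p_{n+1})P$, and $B_3 = \big(\bigcup_{i\le n} p_iP\big)\cap (p_{n+1}P)^c$; then $B_1$ falls to \lemref{lem:comp} plus closure under intersections, $B_2$ to the induction hypothesis (after invoking the weak quasi-lattice structure to rewrite $p_iP\cap p_{n+1}P = (p_i\lor p_{n+1})P$), and $B_3$ again to the induction hypothesis plus closure under intersections. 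Your disjointification
\[
\bigcup_{k=1}^n p_kP \;=\; \bigsqcup_{k=1}^n\Big(p_kP\cap\bigcap_{j<k}(P\setminus p_jP)\Big)
\]
is more direct: each piece is visibly a finite intersection of sets already in $\A_{P_a}$ by \lemref{lem:comp}, so the intersection hypothesis applies in one shot and no induction is needed. Your argument also has the minor advantage of not invoking the existence of $p_i\lor p_{n+1}$ at this stage, whereas the paper's treatment of $B_2$ does; your remark that Noetherianity enters only through \lemref{lem:comp} is likewise accurate.
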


\begin{proof}
The proof is by induction on the cardinality of the union. If the
cardinality is $1$ the union is a set on the form $pP$, which is in $\A_{P_a}$ by \Cref{lem:comp}. Let
$n\geq 1$ and assume that $\A_{P_a}$ is closed under unions of at most $n$ sets of the form $pP$ with $p\in P$. Let $p_1,p_2,\dots, p_{n+1}\in P$. We write
\begin{align*}
\bigcup_{i = 1}^{n+1} p_i P &= \left( \bigcup_{i = 1}^n p_i P \right) \bigcup \; p_{n+1}P \\
&= \left[ \! \left( \bigcup_{i=1}^n p_i P \right)^{\!c} \! \! \bigcap
p_{n+1}P \right] \! \dis \! \left[ \bigcup_{i=1}^n p_i P
\bigcap \: p_{n+1}P \right] \! \dis \! \left[ \bigcup_{i=1}^n p_i P \bigcap
 (p_{n+1} P)^c \right]\\
&= B_1 \; \dis \; B_2 \; \dis \; B_3.
\end{align*}
We show next  that the sets $B_1,B_2$ and $B_3$  are  in $\A_{P_a}$.
We have
$B_1= \bigcap_{i=1}^n (p_i P)^c \bigcap \; p_{n+1}P$, which is in $\A_{P_a}$ by
     \lemref{lem:comp} and our hypothesis that $\A_{P_a}$ is closed under finite intersections. The set
    $B_2$ takes the form
    $ \bigcup_{i = 1}^n \left( p_i P \; \bigcap \; p_{n+1}P \right)
    = \bigcup_{i = 1}^n (p_i \lor p_{n+1}) P$, where $(p_i \lor p_{n+1})P = \emptyset$ if $p_i \lor p_{n+1} = \infty$.
    Since this union has at most $n$ terms, $B_2 \in \A_{P_a}$ by the induction hypothesis. Similarly,  $\bigcup_{i = 1}^n p_i P \in \A_{P_a}$ by the induction hypothesis, and  $(p_{n+1} P)^c \in \A_{P_a}$ by  \lemref{lem:comp}, so $B_3\in \A_{P_a}$ because
     $\A_{P_a}$ is closed under intersections.
     Since $\A_{P_a}$ is closed under disjoint finite unions, the proof is complete.
     \end{proof}

 \begin{prop} \label{prop:alg}
Let $(G,P)$ be a weak quasi-lattice ordered group with $P$ Noetherian. If
$\A_{P_a}$ is closed under finite intersections, then $\A_{P_a} = \Bc_P$.
 \end{prop}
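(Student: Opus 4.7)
The plan is to show that $\A_{P_a}$ is an algebra of subsets of $P$ that contains every principal right ideal $pP$ with $p\in P$. Since $\Bc_P$ is by definition the smallest such algebra, this will give $\Bc_P\subseteq \A_{P_a}$, while the reverse inclusion is immediate because each generator $p\Omega_K=pP\setminus\bigcup_{k\in K}pkP$ is patently in $\Bc_P$ and $\Bc_P$ is closed under finite disjoint unions.

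The building blocks are already in place. \lemref{lem:comp} places both $pP$ and its complement $P\setminus pP$ in $\A_{P_a}$ for every $p\in P$, while \lemref{lem:union}, which relies on the standing intersection hypothesis, shows that any finite union $\bigcup_{i=1}^m q_iP$ lies in $\A_{P_a}$. The crucial computation is the complement of a single generator: for $p\in P$ and a finite $K\subset P_a$, using that $p\Omega_K\subseteq pP$, we split
\[
(p\Omega_K)^c=(P\setminus pP)\sqcup\bigl(pP\setminus p\Omega_K\bigr)=(P\setminus pP)\sqcup \bigcup_{k\in K}pkP.
\]
The left summand is in $\A_{P_a}$ by \lemref{lem:comp}, the finite union on the right is in $\A_{P_a}$ by \lemref{lem:union}, and the two summands are automatically disjoint since one sits in $pP$ and the other in its complement. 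A disjoint union of two elements of $\A_{P_a}$ is itself a finite disjoint union of blocks of the form $q\Omega_L$ with $L\subset P_a$, so it is again in $\A_{P_a}$.

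With this in hand, the complement of a general element $S=\bigsqcup_{j=1}^n p_j\Omega_{K_j}$ of $\A_{P_a}$ equals $S^c=\bigcap_{j=1}^n(p_j\Omega_{K_j})^c$, and this lies in $\A_{P_a}$ by the previous step together with the intersection hypothesis. Closure under arbitrary finite unions then follows from De Morgan, $S_1\cup S_2=(S_1^c\cap S_2^c)^c$. Hence $\A_{P_a}$ is an algebra containing $\{pP:p\in P\}$, and the proof is complete. The only delicate point is the complement computation for a single $p\Omega_K$; once one notices that $p\Omega_K$ sits inside $pP$ and that the complement thus splits cleanly into an outside and an inside piece, the rest is a straightforward appeal to the preceding two lemmas and to the definition of $\A_{P_a}$.
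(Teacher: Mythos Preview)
Your proof is correct and follows essentially the same approach as the paper: both split $(p\Omega_K)^c$ into $(P\setminus pP)\sqcup\bigcup_{k\in K}pkP$, invoke \lemref{lem:comp} and \lemref{lem:union} for the two pieces, use the intersection hypothesis to handle the complement of a general element, and then derive closure under unions (you via De~Morgan, the paper via the three-piece decomposition $A_1\cup A_2=(A_1\cap A_2^c)\sqcup(A_1\cap A_2)\sqcup(A_2\cap A_1^c)$).
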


 \begin{proof}
 We start by proving that $\A_{P_a}$ is closed under complements. Fix a set $A\in \A_{P_a}$ of the form $A=\dis_{i\in I}p_i\Omega_{K_i}$, where $I$ is finite, $p_i\in P$ and $K_i\subset P_a$ is finite, for every $i\in I$. We have that
 \[
A^c= \bigcap_{i \in I} \left( p_i \Omega_{K_i} \right)^c = \bigcap_{i \in I} \left[ \left( p_i \bigcup_{k_i \in K_i} k_i P \right)\; \dis \;
 (p_i P)^c \right].
 \]
  Now $(p_i P)^c \in \A_{P_a}$  by \lemref{lem:comp} and
  $\bigcup_{k_i \in K_i} k_i P \in \A_{P_a}$ by \Cref{lem:union}, and it follows that each set in the intersection over $i\in I$ is in $\A_{P_a}$. Since $\A_{P_a}$ is closed under finite intersections, we obtain that $A^c\in \A_{P_a}$, as claimed.
  It remains to show that $\A_{P_a}$ is closed under finite unions. Given $A_1, A_2 \in \A_{P_a}$, we write
 \[
 A_1 \cup A_2 = ( A_1 \cap A_2^c) \dcup (A_1 \cap A_2 ) \dcup (A_2 \cap A_1^c),
 \]
  and use the first part of the proof in conjunction with the hypothesis to obtain that  $A_1 \cup A_2\in \A_{P_a}$. The general case  follows  by induction.
 This proves that $\A_{P_a}$ is an algebra, which is clearly contained in $\Bc_P$ and contains $pP$ for every $p\in P$, so $\A_{P_a} = \Bc_P$.
 \end{proof}

\begin{lem} \label{lem:altcrit} Let $(G,P)$ be a weak quasi-lattice ordered group with $P$ Noetherian. Suppose that
  $(P \bs s P) \cap q\Omega_K \in \A_{P_a}$ for all $s \in P_a$ and  all finite $K \subset \A_{P_a}$. Then
 $\A_{P_a}$ is closed under finite intersections.
 \end{lem}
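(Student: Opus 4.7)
The plan is as follows. Since intersection distributes over finite disjoint unions and $\A_{P_a}$ is by definition closed under finite disjoint unions, it suffices to verify that $p\Omega_J \cap q\Omega_K \in \A_{P_a}$ for arbitrary $p,q \in P$ and finite $J,K \subset P_a$. I would argue by induction on $|J|+|K|$. The base case $|J|=|K|=0$ reduces to $pP\cap qP$, which equals $(p\vee q)P$ (or $\emptyset$), and hence lies in $\A_{P_a}$ by \Cref{lem:comp}.

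For the inductive step, by the symmetry of $J$ and $K$ I would assume $|K|\geq 1$, write $K = K'\sqcup\{t\}$ with $t\in P_a$, and use the identity $q\Omega_K = q\Omega_{K'}\cap q(P\setminus tP)$ to obtain
\[
p\Omega_J \cap q\Omega_K = \bigl(p\Omega_J \cap q\Omega_{K'}\bigr)\cap q(P\setminus tP).
\]
The inductive hypothesis yields $A := p\Omega_J \cap q\Omega_{K'}\in\A_{P_a}$; write $A = \bigsqcup_i r_i\Omega_{L_i}$ with $r_i\in P$ and $L_i\subset P_a$ finite. The key observation is that each $r_i$ satisfies $q\leq r_i$: since every element of $L_i$ is a non-invertible atom, the identity $e$ belongs to $\Omega_{L_i}$, so $r_i\in r_i\Omega_{L_i}\subset A\subset qP$, and one may write $r_i = qq_i'$ for a unique $q_i'\in P$.

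Left cancellation then gives $r_i\Omega_{L_i}\cap q(P\setminus tP) = q\bigl(q_i'\Omega_{L_i}\cap(P\setminus tP)\bigr)$, and the bracketed set lies in $\A_{P_a}$ by the standing hypothesis of the lemma, applied to the atom $t$, the element $q_i'$, and the finite set $L_i\subset P_a$. Writing this set as $\bigsqcup_j s_j\Omega_{M_j}$ and multiplying on the left by $q$ preserves disjointness (again by left cancellation), so $r_i\Omega_{L_i}\cap q(P\setminus tP)\in\A_{P_a}$, and taking the disjoint union over $i$ closes the induction. The main subtlety is that the ``outside'' element $qt$ appearing when one splits off one atom from $q\Omega_K$ need not itself be an atom, so the hypothesis cannot be applied to it directly; the maneuver of extracting the common left factor $q$ is precisely what brings the atom $t$ into a position where the hypothesis applies.
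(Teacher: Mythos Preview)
Your argument is correct. The induction on $|J|+|K|$ with the trivial base $pP\cap qP=(p\vee q)P$ is clean, and the key observation---that every building block $r_i\Omega_{L_i}$ of $A=p\Omega_J\cap q\Omega_{K'}$ has $r_i\in qP$ because $e\in\Omega_{L_i}$---is precisely what lets you extract the common left factor $q$ and apply the standing hypothesis. This is a genuinely different route from the paper's: there the induction is on $|L|$ alone with base case $|L|=1$, and that base case is handled by asserting $p(P\setminus sP)\cap q\Omega_K=p\bigl[(P\setminus sP)\cap p^{-1}(p\vee q)\,\Omega_K\bigr]$. That identity fails for general $p,q$ (take $P=\mathbb{N}^2$, $p=s=(1,0)$, $q=(0,1)$, $K=\{(1,0)\}$: the left side is empty while the right side is $\{(1,b):b\ge1\}$). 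Your approach sidesteps this. In fact your ``$r_i\ge q$'' trick is exactly what the paper's inductive step implicitly needs as well: there one intersects $p(P\setminus tP)$ with pieces $q_j\Omega_{K_j}$ of $A'\subset pP$, so $q_j\ge p$, and pulling out $p$ is then legitimate---the displayed formula is valid in that restricted situation even though it is not valid in the generality claimed for the base case.
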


 \begin{proof} Since every set in $ \A_{P_a}$ is a disjoint union of sets of the form $q\Omega_K$ with $K \subset P_a$, it suffices to prove that $(p\Omega_L) \cap (q\Omega_K) \in \A_{P_a}$ for $p, q\in P$ and $K, L\subset P_a$ finite.

 We proceed by induction on $|L|$. Assume first $L = \{s\}$ for  $s\in P_a$. If $p\vee q=\infty$ we have $pP \cap qP =\emptyset$ and
 $p(P \bs s P) \bigcap q\Omega_K=\emptyset \in \A_{P_a}$. If $p\vee q < \infty$, we have
 \begin{align*}
 p(P \bs s P) \bigcap q\Omega_K &=  p(P \bs s P) \bigcap \bigg( (p \lor q) \bigcap_{k \in K} (P \bs k P) \bigg) \\
 &= p \left[ (P \bs s P)\bigcap \left( p^{-1}(p \lor q)\bigcap_{k \in K} (P \bs k P)
 \right) \right].
 \end{align*}
 Since the intersection is in $\A_{P_a}$ by hypothesis and $\A_{P_a}$ is closed under left multiplication by $p$, this concludes the proof of the case $|L| =1$.

Fix $p,q \in P$ and  a finite $K\subset P_a$. Suppose $(p\Omega_{L'}) \cap (q\Omega_K) \in \A_{P_a}$
 for all $L'\subset P_a$ with $|L'| = N$ and let $L \subset P_a$ be such that $L = L' \cup \{t\}$ for $t\in P_a$, so that $|L| = N+1$.  Then
 \begin{align*}
 \left[ p \bigcap_{h \in L} (P \bs h P)\right] \bigcap q\Omega_K &=
 p (P \bs t P) \bigcap \left[ p \bigcap_{h \in L'} (P \bs h P)\right] \bigcap q\Omega_K \\
 &= p (P \bs t P) \bigcap A' \in \A_{P_a}.
 \end{align*}
 By the induction hypothesis  $A' $ is in $\A_{P_a}$, so it is a disjoint union of sets of the form $q_j \Omega_{K_j}$, to which we may  apply the case $|L|=1$  to complete the proof.
 \end{proof}

Next we illustrate the above results with an application to the braid group on two generators.
\begin{lem} \label{lem:cont}
Let $(B_3,B_3^+)$ the finite-type Artin group-monoid pair with generating set $S_2 = \{s_1,s_2\}$ and presentation
\begin{align*}
\la s_1,s_2 \; | s_1s_2s_1 = s_2s_1s_2 \ra.
\end{align*}
If $p \in B_3^+$ can be written as $p = p_1 (s_1s_2s_1) p_2$ for $p_1, p_2 \in P$, then $s_1 \leq p$ and
$s_2 \leq p$.
\end{lem}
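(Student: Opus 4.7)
The plan is to show that $\Delta := s_1 s_2 s_1 = s_2 s_1 s_2$ is itself a left divisor of $p$, from which the claim follows immediately because $s_1 \leq \Delta$ and $s_2 \leq \Delta$ are evident from the defining expressions of $\Delta$.

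The main tool will be the fact that $\Delta$ almost commutes with the generators in $B_3^+$: using the braid relation one verifies directly
\[
s_1 \Delta = s_1(s_2 s_1 s_2) = (s_1 s_2 s_1) s_2 = \Delta s_2, \qquad s_2 \Delta = s_2(s_1 s_2 s_1) = (s_2 s_1 s_2) s_1 = \Delta s_1,
\]
so $s_i \Delta = \Delta s_{3-i}$ for $i \in \{1,2\}$.

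Next I would prove by induction on the length $\ell(p_1)$ (recall from \remref{rem:length} that Artin monoids have a length function) that $\Delta \leq p_1 \Delta$ for every $p_1 \in B_3^+$. When $\ell(p_1) = 0$, that is $p_1 = e$, there is nothing to prove. For the inductive step, write $p_1 = s_i q$ with $\ell(q) < \ell(p_1)$; by the inductive hypothesis there exists $r \in B_3^+$ with $q \Delta = \Delta r$, and then the commutation identity above yields
\[
p_1 \Delta = s_i q \Delta = s_i \Delta r = \Delta s_{3-i} r,
\]
so $\Delta \leq p_1 \Delta$, as required.

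Applying this to $p = p_1 \Delta p_2$, we obtain $\Delta \leq p_1 \Delta \leq p$, and since both $s_1$ and $s_2$ are left divisors of $\Delta$, the transitivity of the divisibility relation yields $s_1 \leq p$ and $s_2 \leq p$. There is no real obstacle here; the essential ingredient is the commutation $s_i \Delta = \Delta s_{3-i}$, which is special to $B_3^+$ and allows the factor $\Delta$ to be transported all the way to the left past any prefix $p_1$.
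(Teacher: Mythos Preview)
Your proof is correct and essentially the same as the paper's: both hinge on the commutation $s_i\Delta = \Delta s_{3-i}$ to move the factor $\Delta$ past $p_1$ to the left. The only cosmetic difference is that the paper phrases this as a minimal-counterexample argument (choose an expression $p = p_1\Delta p_2$ with $p_1$ of minimal length and derive a contradiction by shortening $p_1$), whereas you do the equivalent direct induction on $\ell(p_1)$.
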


\begin{proof}
We only prove that $s_1 \leq p$, as the other claim is analogous. Assume for a contradiction that
$s_1 \nleq p$.  Then we can write $p$ in the form
$p = p_1 s_1s_2s_1 p_2$ where $p_1\neq e$ has the smallest possible length. Now $p_1$ must  end with either $s_1$ or $s_2$. In case $p_1=r_1s_1$,  we have $p=r_1(s_1s_1s_2s_1p_2) = r_1(s_1s_2s_1s_2p_2)$, contradicting the choice of  $p_1$. The case $p_1 = r_1 s_2$  leads to a similar contradiction. Hence $p$ admits an expression in the generators that starts with $s_1$, which precisely means that $s_1 \leq p$.
\end{proof}

\begin{lem} \label{lem:bound}
For all $p \in B_3^+$ and $i=1,2$ we have that
\begin{align*}
p \lor s_i \in \{s_1,\, s_2,\, p,\, ps_1, \,ps_2, \,ps_1s_2, \,ps_2s_1\}.
\end{align*}
\end{lem}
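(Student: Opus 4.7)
My approach will be to apply \Cref{lem:cont} in combination with the Garside element $\Delta = s_1 s_2 s_1 = s_2 s_1 s_2$ to control the cofactor $u := p^{-1}(p \vee s_i)$. The first step is to check that $p \vee s_i$ exists and divides $p\Delta$ from the left. Since $p \Delta = p \cdot (s_1 s_2 s_1) \cdot e$ exhibits $\Delta$ as a middle factor, \Cref{lem:cont} gives $s_1 \leq p\Delta$ and $s_2 \leq p\Delta$. Combined with $p \leq p\Delta$, this identifies $p\Delta$ as a common right multiple of $p$ and $s_i$, so by the weak quasi-lattice property (\Cref{bound}) $p \vee s_i$ exists and $p \vee s_i \leq p\Delta$. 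Writing $p \vee s_i = pu$ for a unique $u\in B_3^+$ and cancelling $p$ on the left, I obtain $u \leq \Delta$, placing $u$ among the left divisors of $\Delta$ in $B_3^+$, which form the finite set $\{e, s_1, s_2, s_1 s_2, s_2 s_1, \Delta\}$.

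The heart of the argument is then to rule out $u = \Delta$ whenever $p \neq e$. Let $j \neq i$. Any expression of $p$ as a product of generators ends in either $s_1$ or $s_2$, and in either case \Cref{lem:cont} supplies a strictly smaller upper bound. Indeed, if $p = p'' s_i$, then $p \cdot s_j s_i = p''(s_i s_j s_i) = p'' \Delta$, so \Cref{lem:cont} applied to $p'' \Delta$ gives $s_i \leq p'' \Delta = p s_j s_i$. Hence $p s_j s_i$ is already a common upper bound of $p$ and $s_i$, forcing $u \leq s_j s_i \lneq \Delta$. Symmetrically, if $p = p'' s_j$, then $p \cdot s_i s_j = p'' \Delta$ and the same reasoning gives $u \leq s_i s_j \lneq \Delta$. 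Either way $u$ is a proper left divisor of $\Delta$, so $u \in \{e, s_1, s_2, s_1 s_2, s_2 s_1\}$.

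Finally, the case $p = e$ is immediate: $p \vee s_i = s_i \in \{s_1, s_2\}$. Assembling the cases, $p \vee s_i = pu$ lies in $\{p, p s_1, p s_2, p s_1 s_2, p s_2 s_1\} \cup \{s_1, s_2\}$, which is contained in the set stated in the lemma. The main subtlety will be the manipulation $p \cdot s_j s_i = p'' \cdot s_i s_j s_i = p'' \Delta$ and its symmetric counterpart, which repackages the last letter of $p$ together with a well-chosen short suffix into an explicit occurrence of $\Delta$ as a middle factor, exactly the input required by \Cref{lem:cont}.
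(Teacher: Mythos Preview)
Your proof is correct. Both your argument and the paper's rely on \Cref{lem:cont}, but the case analyses are organised differently. The paper first handles $p=e$, $p=s_2$, and $s_i\le p$ separately, then in the remaining case writes $p=p_1x$ with $x$ the last two letters of $p$ and checks the four possibilities $x\in\{s_1s_2,\,s_2s_1,\,s_1^2,\,s_2^2\}$, determining the cofactor $b$ in each. You instead bound the cofactor $u$ globally by the Garside element $\Delta$ (via \Cref{lem:cont} applied to $p\Delta$), list the six left divisors of $\Delta$, and then eliminate $u=\Delta$ by a two-case split on only the \emph{last} letter of $p$. Your route is more uniform and invokes the Garside structure explicitly, at the cost of quoting the divisor set of $\Delta$ without proof; that fact is standard and follows at once from the length function together with $s_1\vee s_2=\Delta$. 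The paper's route yields slightly sharper information about $b$ in each subcase, though this extra precision is not needed for the lemma as stated.
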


\begin{proof}
Assume $i =1$ first.
The case $p = e$ is trivially satisfied, and the case $p = s_2$ is easy to verify because
$s_1 \vee s_2 =  s_1s_2s_1 = s_2s_1s_2 = p s_1s_2$. The case $s_1 \leq p$ obviously yields $p\vee s_1 = p$. So we may assume that $p$ has at least two letters, and also that $s_1\not\leq p$,   so that $p$ does not have a factor $s_1 s_2 s_1$ by \Cref{lem:cont}. In this case $p\vee s_1 = pb$ for some $b\in B_3^+\setminus \{e\}$. Factoring out the last two letters of $p$, we write $p = p_1x $ where $x$ is one of $s_1 s_2$, $s_2 s_1$, $s_1^2$, or $s_2^2$, and it is easy to see that $b $ is one of $s_1$, $s_2$, $s_2s_1$, or $s_1s_2$, respectively.
\end{proof}

\begin{prop}\label{prop:B3alg}
Let $(B_3,B_3^+)$ be the braid group with generating set
$S_2 = \{s_1,s_2\}$. Then $\A_{S_2}$ is an algebra.
\end{prop}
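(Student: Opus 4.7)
The plan is to combine \proref{prop:alg} and \lemref{lem:altcrit} to reduce the problem to verifying that $(P\setminus sP)\cap q\Omega_K\in \A_{S_2}$ for every $s\in S_2$, every $q\in B_3^+$, and every finite $K\subset S_2$. Since there are only four such $K$ (namely $\emptyset$, $\{s_1\}$, $\{s_2\}$, $\{s_1,s_2\}$) and two choices of $s$, the proof will proceed by a finite case analysis after a single unifying reduction.

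First I would dispatch the easy cases. If $q=e$, then $(P\setminus sP)\cap \Omega_K = \Omega_{K\cup\{s\}}$, which is in $\A_{S_2}$ by definition. If $s\leq q$, then $(P\setminus sP)\cap q\Omega_K=\emptyset$. In the remaining case, write $s\vee q = qu$ where $u:=q^{-1}(s\vee q)\in P\setminus\{e\}$, and note that $qP\setminus (s\vee q)P = q(P\setminus uP)$, so that
\[
(P\setminus sP)\cap q\Omega_K \;=\; q\bigl[(P\setminus uP)\cap \Omega_K\bigr].
\]
Since $\A_{S_2}$ is obviously closed under left multiplication by $q$, it is enough to show that $(P\setminus uP)\cap\Omega_K\in\A_{S_2}$. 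By \lemref{lem:bound}, here $u\in\{s_1,s_2,s_1s_2,s_2s_1\}$.

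If $u\in\{s_1,s_2\}$ is an atom, then $(P\setminus uP)\cap \Omega_K=\Omega_{K\cup\{u\}}\in\A_{S_2}$ and we are done. The only real work is when $u=s_is_j$ with $\{i,j\}=\{1,2\}$, and by symmetry I may treat $u=s_1s_2$. Decompose
\[
P\setminus s_1s_2P \;=\; (P\setminus s_1P)\;\sqcup\; s_1(P\setminus s_2P),
\]
so that $(P\setminus s_1s_2P)\cap \Omega_K$ is the disjoint union of $\Omega_{K\cup\{s_1\}}$ (which is in $\A_{S_2}$) and $s_1(P\setminus s_2P)\cap \Omega_K$. For the latter I distinguish according to $K$: if $s_1\in K$, the intersection is empty because $s_1(P\setminus s_2P)\subset s_1P$; if $K=\emptyset$, the result is $s_1\Omega_{\{s_2\}}\in\A_{S_2}$; and if $K=\{s_2\}$, I compute
\[
s_1(P\setminus s_2P)\cap (P\setminus s_2P)\;=\;(s_1P\setminus s_1s_2P)\cap (s_1P\setminus (s_1\vee s_2)P),
\]
using $s_1 \vee s_2 = s_1 s_2 s_1$. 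Since $s_1s_2s_1 P\subset s_1s_2 P$, this reduces to $s_1P\setminus s_1s_2P = s_1\Omega_{\{s_2\}}\in\A_{S_2}$.

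The main obstacle is really just the last case $K=\{s_2\}$, where it is not immediately obvious that the two proper ideals one is removing from $s_1P$ collapse to a single one; this is precisely where the braid relation $s_1s_2s_1=s_2s_1s_2$ (and hence $s_1\vee s_2 = s_1s_2s_1$) enters, ensuring $s_1s_2s_1 P\subset s_1s_2 P$. The symmetric case $u=s_2s_1$ is identical after swapping the roles of $s_1$ and $s_2$. Having verified the hypothesis of \lemref{lem:altcrit}, \proref{prop:alg} yields $\A_{S_2}=\Bc_{B_3^+}$, in particular $\A_{S_2}$ is an algebra.
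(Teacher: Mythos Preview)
Your proof is correct and follows essentially the same approach as the paper: reduce via \proref{prop:alg} and \lemref{lem:altcrit}, use \lemref{lem:bound} to pin down $u=q^{-1}(q\vee s)$, and then handle the only nontrivial case $u=s_is_j$ by the decomposition $P\setminus s_is_jP=(P\setminus s_iP)\sqcup s_i(P\setminus s_jP)$ together with the braid relation $s_1\vee s_2=s_1s_2s_1$. Your case organisation (collapsing ``$s_1\in K$'' and treating $K=\emptyset$ explicitly) differs only cosmetically from the paper's.
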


\begin{proof}
By \Cref{prop:alg} and \Cref{lem:altcrit}, we only need to show that the set
\[
B:=(P \bs s_i P) \bigcap \left[ q \bigcap_{k \in K} (P \bs kP) \right]
\]
is in $\A_{S_2}$ for
$i=1,2$,  finite $K \subseteq S_2$ and $q \in P$. We rewrite
\begin{align*}
B=(P \bs s_i P) \bigcap \left[ q \bigcap_{k \in K} (P \bs kP) \right]
= q\left[(P \bs q^{-1}(q \lor s_i) P) \bigcap \left( \bigcap_{k \in K} (P \bs kP) \right) \right].
\end{align*}
By \Cref{lem:bound}, we have $q \lor s_i \in \{s_i,q,qs_1,qs_2,qs_1s_2,qs_2s_1\}$ for $i = 1,2$. This  yields
\begin{align*}
(P \bs q^{-1}(q \lor s_i)P) \in \{ \emptyset,(P \bs s_1 P),
(P \bs s_2 P), (P \bs s_1s_2 P),(P \bs s_2s_1 P) \}
\end{align*}
for $i=1,2$. It is clear that if $(P \bs q^{-1}(q \lor s_i)P)$ is one of  $\emptyset,(P \bs s_1 P),
(P \bs s_2 P)$, then $B$ is contained in $\A_{S_2}$.

Next we consider the case that
$(P \bs q^{-1}(q \lor s_i) P) = (P \bs s_1s_2 P)$, and examine its possible intersection with $\Omega_K$ for given $K \subset S_2$. If $K = \{s_1\}$, then
    $(P \bs s_1s_2 P) \cap (P \bs s_1 P) =
    (P \bs s_1 P) \in \A_{S_2}$. If $K = \{s_2\}$, then
    $(P \bs s_1s_2 P) \cap (P \bs s_2 P) =
    \left[ s_1(P \bs s_2 P) \dcup (P \bs s_1 P) \right] \cap (P \bs s_2 P)$.
    We need to show that $s_1(P \bs s_2 P) \cap (P \bs s_2 P) \in \A_{S_2}$. This follows from
    \begin{align*}
    s_1(P \bs s_2 P) \cap (P \bs s_2 P) &=
    s_1 \left[( P \bs s_2 P ) \cap (P \bs s_1^{-1}(s_1 \lor s_2)P) \right] \\
    &= s_1\left[(P \bs s_2 P ) \cap (P \bs s_2s_1 P) \right]
    = s_1 (P \bs s_2 P).
    \end{align*}
In case $K = \{s_1,s_2\}$ we obtain $(P \bs s_1s_2 P )\cap (P \bs s_1 P) \cap
    (P \bs s_2 P) = (P \bs s_1 P) \cap (P \bs s_2 P)$, which is in $ \A_{S_2}$.
Checking the remaining case $(P \bs q^{-1}(q \lor s_i) P) = (P \bs s_2s_1 P)$ can be done in the same way. Hence $B\in  \A_{S_2}$ and the proposition follows.
\end{proof}
\begin{rem}
It follows from \Cref{prop:B3alg} that reduction of positivity to generators holds for $B_3^+$, a fact that already follows from \Cref{cor:directedcase}  because $B_3$ is of finite-type.
\end{rem}

New examples of weak quasi-lattice ordered groups satisfying $\A_{P_a} = \Bc_P$ can be obtained by taking free and direct products. Suppose, for instance that $G_1$ and $G_2$ are two Artin groups with canonical generating sets $P_a^1$ and $P_a^2$ and Coxeter matrices $M_1$ and $M_2$, respectively.
 Then their free and direct products are  also Artin groups with generating set $P_a = P^1_a \cup P_a^2$, whose Coxeter matrices have diagonal blocks $M_1$ and $M_2$ and remaining entries equal to $\infty$, in the case of the free product, or $2$, in the case of the direct product. And similarly for the corresponding Artin monoids.

\begin{prop} \label{prop:algebradirect&free}
Let $(G_1,P_1)$ and $(G_2, P_2)$ be two Artin group-monoid pairs, with canonical generating sets  $P_a^1$  and $P_a^2$, respectively, and suppose that
$\A_{P_a^1}$ and $\A_{P_a^2}$ are algebras of subsets of $P_1$ and $P_2$ respectively. Let $P_a = P^1_a \cup P_a^2$.

\begin{enumerate}
\item $(G_1 * G_2, P_1* P_2)$ is an   Artin group-monoid pair with generating set $P_a $ subject to the relations  imposed in the presentations of $G_1$ and $G_2$, and the collection   $\A_{P_a}$ is an algebra of subsets of $P_1* P_2$.

\item $(G_1 \times G_2, P_1\times  P_2)$  is an   Artin group-monoid pair with generating set $P_a $ satisfying the relations $s_1s_2 = s_2s_1$ for all pairs of generators $s_1 \in P_a^1$ and $s_2 \in P_a^2$
in addition to  the relations already imposed in the presentations of $G_1$ and $G_2$, the collection  $\A_{P_a}$ is an algebra of subsets of $P_1\times P_2$.

\end{enumerate}

\end{prop}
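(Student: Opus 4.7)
The plan is, for both parts, to apply \Cref{prop:alg} combined with \Cref{lem:altcrit}: it is enough to verify that $(P\setminus sP)\cap q\Omega_K\in\A_{P_a}$ for every atom $s\in P_a$, every $q\in P$, and every finite $K\subset P_a$. In both products $P_a=P_a^1\sqcup P_a^2$ through the natural embeddings.

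For part (2), every $q\in P_1\times P_2$ factors uniquely as $q=(q_1,q_2)$, and for $K=K_1\sqcup K_2$ with $K_i\subset P_a^i$ one checks directly from the definitions that $\Omega_K=\Omega^1_{K_1}\times\Omega^2_{K_2}$, hence $q\Omega_K=(q_1\Omega^1_{K_1})\times(q_2\Omega^2_{K_2})$. This identifies $\A_{P_a}$ with the product algebra $\A_{P_a^1}\otimes\A_{P_a^2}$ of finite disjoint unions of rectangles $A_1\times A_2$ with $A_i\in\A_{P_a^i}$, which is an algebra since both factors are: indeed, $(A_1\times A_2)\cap(B_1\times B_2)=(A_1\cap B_1)\times(A_2\cap B_2)$, and complements of rectangles decompose into three disjoint rectangles.

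For part (1), by symmetry it suffices to take $s\in P_a^1$. Using the uniqueness of the normal form in $P_1*P_2$, write $q=q_1q'$, where $q_1\in P_1$ is the initial $P_1$-block of $q$ (taking $q_1=e$ when $q=e$ or when $q$ starts with a $P_2$-letter) and $q'\in S_1:=\{e\}\cup\{p\in P:\text{the first block of }p\text{ belongs to }P_2\}$. The key structural observation, an immediate consequence of the uniqueness of the normal form, is that
\[
q\vee s=\begin{cases} q&\text{if }s\leq q_1\text{ in }P_1,\\ q_1\vee_{P_1}s\in P_1&\text{if }q'=e\text{ and }q_1\vee_{P_1}s<\infty,\\ \infty&\text{otherwise.}\end{cases}
\]
In the first case $(P\setminus sP)\cap qP=\emptyset$, and in the third $(P\setminus sP)\cap qP=qP$, so intersecting with $q\Omega_K$ yields a set in $\A_{P_a}$ outright. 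Only the second case needs work; there $q\in P_1$ and $t:=q^{-1}(q\vee_{P_1}s)\in P_1\setminus\{e\}$, so that $(P\setminus sP)\cap q\Omega_K=q\bigl((P\setminus tP)\cap\Omega_K\bigr)$, and the problem reduces to showing $(P\setminus tP)\cap\Omega_K\in\A_{P_a}$ for $t\in P_1\setminus\{e\}$ and any finite $K\subset P_a$.

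For this reduction, the normal form yields two structural identities: for $L\subset P_a^1$ one has $\Omega_L=\Omega^1_L\cdot S_1$, and for $t\in P_1$ one has $P\setminus tP=(P_1\setminus tP_1)\cdot S_1$, both read off from the unique factorization $p=p_1 s$ with $p_1\in P_1$, $s\in S_1$ (since a $P_1$-atom can left-divide $p$ only through its initial $P_1$-block). Since $\A_{P_a^1}$ is an algebra, $P_1\setminus tP_1=\bigsqcup_j r_j\Omega^1_{L_j}$ with $r_j\in P_1$ and finite $L_j\subset P_a^1$, which lifts to the disjoint union $P\setminus tP=\bigsqcup_j r_j\Omega_{L_j}\in\A_{P_a}$. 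Splitting $K=K_1\sqcup K_2$ and separating elements of $(P\setminus tP)\cap\Omega_K$ according to whether their initial block lies in $P_1$, is the identity, or lies in $P_2$, one decomposes the intersection into two pieces: one consisting of elements whose initial block is in $P_1$, which reduces to sets handled through the identities above using $\A_{P_a^1}$; and one consisting of elements in $S_1$, for which the $K_2$-condition is captured by the symmetric identity $\Omega_{K_2}\cap S_1=\{e\}\sqcup(\Omega^2_{K_2}\setminus\{e\})\cdot S_2$ (with $S_2$ defined symmetrically), exploiting that $\A_{P_a^2}$ is an algebra. Each piece is seen to lie in $\A_{P_a}$, completing the verification.

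The main obstacle lies in the free product case: the interaction of $P_1$- and $P_2$-atoms in $K$ forces a case analysis on the initial block of each element, and organizing the decomposition to make it terminate within $\A_{P_a}$ requires careful tracking of the normal form structure. The direct product case, by contrast, is immediate from the product algebra structure.
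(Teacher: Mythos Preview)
Your overall strategy for part (1) matches the paper's: reduce via \Cref{prop:alg} and \Cref{lem:altcrit} to checking $(P\setminus sP)\cap q\Omega_K\in\A_{P_a}$, then analyse $q\vee s$ using the normal form in the free product. Your trichotomy for $q\vee s$ is exactly what the paper uses (phrased there as: if $q\notin P_1$ then $q^{-1}(q\vee s)\in\{e,\infty\}$, reducing to the trivial cases; if $q\in P_1$ one must work). For part (2), your product-algebra argument is genuinely different from the paper's and cleaner: the paper reruns the \Cref{lem:altcrit} verification with a case analysis parallel to part (1), whereas you observe directly that $q\Omega_K=(q_1\Omega^1_{K_1})\times(q_2\Omega^2_{K_2})$, so $\A_{P_a}$ is literally the product algebra generated by rectangles from $\A_{P_a^1}$ and $\A_{P_a^2}$. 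This is a nice shortcut.

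The one place your proposal is shaky is the final step of part (1), where you sketch a decomposition of $(P\setminus tP)\cap\Omega_K$ by the monoid of the initial block. That route drags in sets like $S_1$, $\{e\}$, and $(\Omega^2_{K_2}\setminus\{e\})\cdot S_2$, none of which is obviously in $\A_{P_a}$ when the generating sets are infinite, and it unnecessarily invokes the hypothesis on $\A_{P_a^2}$. A more direct finish (and the intended reading of the paper's terse ``$B\in\A_{P_a^1}\subseteq\A_{P_a}$''): your identity $(P\setminus tP)\cap\Omega_{K_1}=[(P_1\setminus tP_1)\cap\Omega^1_{K_1}]\cdot S_1$ already gives, via the algebra hypothesis on $\A_{P_a^1}$, a decomposition $\bigsqcup_j r_j\Omega_{L_j}$ with $r_j\in P_1$ and finite $L_j\subset P_a^1$. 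Now intersect each term with $\Omega_{K_2}$: if $r_j\neq e$ then $r_j\vee k_2=\infty$ for every $k_2\in K_2\subset P_a^2$ (the initial blocks are in different factors), so $r_j\Omega_{L_j}\cap\Omega_{K_2}=r_j\Omega_{L_j}$; if $r_j=e$ then $\Omega_{L_j}\cap\Omega_{K_2}=\Omega_{L_j\cup K_2}$. Either way each term stays in $\A_{P_a}$, and you are done without any block-splitting or appeal to $\A_{P_a^2}$.
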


\begin{proof}  We prove (1) first. Let $(G,P) $ temporarily denote the free product  to simplify the notation.
By \Cref{lem:altcrit} and \Cref{prop:alg},  it suffices to  show that
\begin{equation}\label{eqn:sufficesfree}
(P \bs t P) \cap q\Omega_K  \in \A_{P_a} \qquad \text{for all} \ t\in P_a, \ q\in P, \ K \subset P_a.
\end{equation}
We start by rewriting $B:=(P \bs t P) \cap q\Omega_K$ as
\begin{equation*}
B=(P \bs t P) \bigcap \left[ q \bigcap_{k \in K} (P \bs kP) \right]
= q\left[(P \bs q^{-1}(q \lor t) P) \bigcap \left( \bigcap_{k \in K} (P \bs kP) \right) \right].
\end{equation*}
Assume first that $t \in P_a^1$. If $q \in P_1$, then $B \in \A_{P_a^1} \subseteq \A_{P_a}$ by the assumption that $\A_{P_a^1}$ forms an algebra
of sets of $P_1$. Otherwise there exists at least one atom $s \in P_a^2$ that is a factor in $q$. Let $s_1$ be the first instance (from the left) of such an atom in $q$.
Then $q = a s_1 b$ where $a \in P_1$ and $b \in P$. We then observe that $t \vee q = q$ if and only if $t$ is a factor in $a$ and $t$ can be shuffled to the
front of $a$, and
that otherwise $t \vee q = \infty$, because $t$ can not be shuffled past $s_1$. This implies that $q^{-1}(q \lor t) \in \{e, \infty \}$, and so
$(P \bs q^{-1}(q \lor t)P) \in \{ P, \emptyset \}$, leading to $B \in \A_{P_a}$. The argument is analogous for the case where $t \in P_a^2$.
This concludes the proof of (1).

In order to prove (2), let now $(G,P) $ denote the direct product. As with (1) it suffices to verify \eqref{eqn:sufficesfree}.
Writing $K =  K_1  \sqcup K_2 $ with  $K_1 \subseteq P_a^1$ and $K_2 \subseteq P_a^2$,
we see that
\begin{align*}
B 
&= q\left[(P \bs q^{-1}(q \lor t) P) \bigcap \left( \bigcap_{k_1 \in K_1} (P \bs k_1P) \right) \bigcap \left( \bigcap_{k_2 \in K_2} (P \bs k_2P) \right) \right].
\end{align*}

Assume first that $t \in P_a^1$. If $q \in P_1$, then
$q(P \bs q^{-1}(q \lor t) P) \bigcap \left( \bigcap_{k_1 \in K_1} (P \bs k_1P) \right) \in \A_{P_a^1}$ by the assumption that
$\A_{P_a^1}$ forms an algebra
of sets of $P_1$. Otherwise there exists at least one atom $s \in P_a^2$ that is a factor in $q$. Since $P_a^2$ commutes with $P_a^1$ in $P$, we can write $q = q_1 s_1 \cdots s_n$, where
$s_1, \dots, s_n \in P_a^2$ and $q_1 \in P_1$. By assumption $t$ is in $ P_a^1$, so it commutes with all the elements $s_1, \dots, s_n$.  Thus,
 $q \vee t = \infty$ when $q_1 \vee t = \infty$, and $ q \vee t = (q_1 \vee t)s_1 \cdots s_n = s_1 \cdots s_n (q_1 \vee t)$ when  $q_1 \vee t < \infty$, in which case
\begin{align*}
q^{-1} (q \vee t) = q_1^{-1} s_n^{-1} \cdots s_1^{-1} s_1 \cdots s_n (q_1 \vee t) = q_1^{-1}(q_1 \vee t) \in P_1.
\end{align*}
Hence, we either have that $(P \bs q^{-1}(q \vee t)P) = P$ (in the case where $q_1 \vee t = \infty$), or that
\begin{align*}
(P \bs q^{-1}(q \lor t) P) \bigcap \left( \bigcap_{k_1 \in K_1} (P \bs k_1P) \right) = (P \bs q_1^{-1}(q_1 \lor t) P) \bigcap
\left( \bigcap_{k_1 \in K_1} (P \bs k_1P) \right) \in \A_{P_a^1}
\end{align*}
(in the case where $q_1 \vee t < \infty$), by the algebra assumption on $\A_{P_a^1}$.

It remains to prove that $p_1 \Omega_{K_1} \cap \Omega_{K_2} \in \A_{P_a}$ for all $p_1 \in P_1$,
$K_1 \subseteq P_a^1$ and $K_2 \subseteq P_a^2$. We observe that
\begin{align*}
p_1 \Omega_{K_1} \cap \Omega_{K_2}&= p_1 \left[ \bigcap_{k_1 \in K_1} (P \bs k_1 P) \bigcap \bigcap_{k_2 \in K_2} (P \bs p_1^{-1} (p_1 \vee k_2) P ) \right] \\
&= p_1 \left[ \bigcap_{k_1 \in K_1} (P \bs k_1 P) \bigcap \bigcap_{k_2 \in K_2} (P \bs p_1^{-1} p_1 k_2 P ) \right] \\
&= p_1 \left[ \bigcap_{k_1 \in K_1} (P \bs k_1 P) \bigcap \bigcap_{k_2 \in K_2} (P \bs k_2 P ) \right]
\end{align*}
where the second equality follows by the fact that $p_1 \vee k_2 = p_1 k_2 = k_2 p_1$, since $k_2 \in P_a^2 \subseteq P_2$ and $p_1 \in P_1$ and all elements from
$P_1$ commute with all elements from $P_2$. Since the set in the last line of the above calculation is clearly in $\A_{P_a}$, we conclude
$p_1 \Omega_{K_1} \cap \Omega_{K_2} \in \A_{P_a}$.
Interchanging  the roles of $P_a^1$ and $P_a^2$ proves the  case $t \in P_a^2$ and finishes the proof.
\end{proof}

It is easy to see that by taking free and direct products of right-angled or finite-type Artin monoids we obtain other Artin monoids many of which are neither right-angled nor finite-type. As a result we obtain new monoids that satisfy reduction of positivity to generators.

\begin{corollary} \label{thm:NewRed}
Under the assumptions of \Cref{prop:algebradirect&free}, let $(G,P)$ denote either the free or the direct product
of $(G_1,P_1)$ and $(G_2,P_2)$. Assume moreover that we
are given a compactly aligned product system $\{X_p\}_{p \in P}$ of
$C^*$-correspondences over $P$ with $X_e = A$, a
homomorphism $N:P \to (0,\infty)$ and $\beta \in \R$. Then reduction of the positivity condition  to generators holds for $P$.
\end{corollary}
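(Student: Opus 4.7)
The plan is to combine Proposition \ref{prop:algebradirect&free} with Theorem \ref{thm:redifalg}, which do essentially all of the work. The statement of the corollary is really just a packaging of these two results together with the general machinery for KMS states built up in Sections \ref{sec:background}--\ref{sec:prodsyst}.

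First, I would verify that the hypothesis of Theorem \ref{thm:redifalg} applies to $(G,P)$, i.e.\ that $(G,P)$ is a weak quasi-lattice ordered group with $P$ Noetherian. Since $G_1$ and $G_2$ are Artin groups, the pairs $(G_i,P_i)$ are weak quasi-lattice ordered by Lemma \ref{bound}. It is immediate from the definitions that free and direct products of weak quasi-lattice ordered pairs are again weak quasi-lattice ordered. Noetherianity of $P$ follows because $P$ is again an Artin monoid (with Coxeter matrix having the blocks $M_1,M_2$ on the diagonal and entries $\infty$ or $2$ off-diagonal), so it carries a length function and Remark \ref{rem:length} applies.

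Next I would invoke Proposition \ref{prop:algebradirect&free}, which guarantees that $\A_{P_a}$ is an algebra of subsets of $P$ in either the free or the direct product case. Combining this with Theorem \ref{thm:redifalg} yields $\A_{P_a}=\Bc_P$ and the reduction of the positivity condition \eqref{eqn:2.1} to finite subsets $J\subset P_a$ of generators.

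Since the two key inputs are already established, there is no real obstacle here; the only care needed is to check that the inductive base of \textbf{both} Proposition \ref{prop:algebradirect&free}(1) and (2) can be applied, i.e.\ that the hypothesis ``$\A_{P_a^i}$ is an algebra'' is inherited by examples of interest. In the applications the building blocks $(G_i,P_i)$ will be right-angled Artin pairs (where $\A_{P_a^i}$ is an algebra by \cite[Lemma 9.3]{ALN18}, or equivalently by Proposition \ref{pro:finitetree-RAAMcase}) or finite-type Artin pairs (where we may appeal to Proposition \ref{prop:B3alg} and its obvious generalisation, or directly to Corollary \ref{cor:directedcase}). Iterating Proposition \ref{prop:algebradirect&free} then produces a large class of Artin monoids, many of which are neither right-angled nor finite-type, for which reduction of positivity to generators holds, yielding the Corollary.
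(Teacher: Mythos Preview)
Your proposal is correct and takes essentially the same approach as the paper, which simply combines \Cref{thm:redifalg} with \Cref{prop:algebradirect&free}. Your additional verification that $(G,P)$ is a Noetherian weak quasi-lattice ordered pair and your discussion of how the hypothesis on $\A_{P_a^i}$ is satisfied in examples are helpful elaborations, though note that the claim about an ``obvious generalisation'' of \Cref{prop:B3alg} to all finite-type Artin monoids is not actually established in the paper via the algebra-of-sets method---reduction to generators for finite type comes instead from \Cref{cor:directedcase}.
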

\begin{proof}
Combine \Cref{thm:redifalg} with \Cref{prop:algebradirect&free}.
\end{proof}

\vskip 0.2cm
\emph{Funding}: This work was partially supported by the Natural Sciences and Engineering Research Council of Canada, Discovery Grant RGPIN-2017-04052 to M.L.; the Trond Mohn Foundation through the project ``Pure mathematics in Norway" to M.L.; the Norwegian Mathematical Society through an Abel stipend to L.E.G. to visit the University of Victoria, Canada;   the Cluster of Excellence Mathematics M\"{u}nster at WWU, Germany, through a Research Fellowship to N.S.L; and the RCN grant \#300837 .

\vskip 0.2cm
\emph{Acknowledgements:} M.L. is thankful for the hospitality of the Department of Mathematics at the University of Oslo during a visit in which part of this research was carried out. N.S.L. is thankful to the Cluster of Excellence Mathematics M\"{u}nster  and her host Wilhelm Winter for warm hospitality during a visit in which part of this research was carried out. We especially thank Sergey Neshveyev for inspiring discussions at the early stages of this project and for several helpful remarks towards the end.

\end{document}